\newcommand{\set}[1]{\left\lbrace #1\right\rbrace}
\providecommand{\abs}[1]{\left\lvert#1\right\rvert}
\newcommand{\remove}[1]{ }
\newcommand{\qtq}[1]{\quad \text{#1}\quad }
\newtheorem{theorem}{Theorem}[section]
\newtheorem{proposition}[theorem]{Proposition}
\newtheorem{lemma}[theorem]{Lemma}
\newtheorem{corollary}[theorem]{Corollary}
\theoremstyle{remark}
\newtheorem{remarks}[theorem]{Remarks}
\newtheorem{examples}[theorem]{Examples}
\newtheorem{definition}[theorem]{Definition}
\def\uu{\mathcal U}
\def\ee{\mathcal E}
\def\vv{\mathcal V}
\def\nn{\mathcal N}
\def\uuu{\overline{\mathcal U}}
\def\uuuq{\overline{{\mathcal U}_q}}
\def\bfu{\mathbf U}
\def\bfuu{\overline{\mathbf U}}
\def\bfv{\mathbf V}
\def\bfj{\mathbf J}
\def\NN{\mathbb N}
\def\ZZ{\mathbb Z}
\def\RR{\mathbb R}
\numberwithin{equation}{section}
\begin{document}

\title{Topology of univoque sets in real base expansions}
\author[M. de Vries]{Martijn de Vries}
\address{Tussen de Grachten 213, 1381DZ Weesp, the Netherlands} 
\email{martijndevries0@gmail.com}

\author{Vilmos Komornik}
\address{College of Mathematics and Statistics,
Shenzhen University,
Shenzhen 518060,
People’s Republic of China,
and
Département de mathématique,
Université de Strasbourg,
7 rue René Descartes,
67084 Strasbourg Cedex, France}
\email{komornik@math.unistra.fr}

\author[P. Loreti]{Paola Loreti}
\address{Sapienza Università di Roma,
Dipartimento di Scienze di Base
e Applicate per l'Ingegne\-ria,
via A. Scarpa n. 16,
00161 Roma, Italy}
\email{paola.loreti@sbai.uniroma1.it}

\subjclass[2000]{Primary: 11A63, Secondary: 10K50, 11K55, 11B83, 37B10}
\keywords{Greedy expansion, beta-expansion,
univoque sequence, univoque number, Cantor set, Thue--Morse sequence, stable base, shift, shift of finite type}
\thanks{The work of the second author was partially supported by the National Natural Science Foundation
of China (NSFC)  \#11871348, and by the grants CAPES: No. 88881.520205/2020-01 and MATH AMSUD: 21-MATH-03.}
\date{Version of 2022-03-15}

\begin{abstract}
Given a positive integer $M$ and a real number $q  \in (1,M+1]$, an expansion of a real number $x \in \left[0,M/(q-1)\right]$ over the alphabet $A=\set{0,1,\ldots,M}$ is a sequence $(c_i) \in A^{\NN}$ such that $x=\sum_{i=1}^{\infty}c_iq^{-i}$. Generalizing many earlier results, we investigate in this paper the topological properties of the set $\uu_q$ consisting of numbers $x$  having a unique expansion of this form, and the combinatorial properties of the set $\uu_q'$ consisting of their corresponding expansions. We also provide shorter proofs of the main results of Baker in \cite{[B]} by adapting the method given in \cite{[EJK]} for the case $M=1$. 
\end{abstract}
\maketitle

\section{Introduction and statement of the main results}\label{s1}

Starting with a seminal paper of R\'enyi \cite{[R]} many papers have been devoted to representations of real numbers $x$ of the form
\begin{equation*}
x=\sum_{i=1}^{\infty} \frac{c_i}{q^i},
\end{equation*}
where the base $q>1$ is a given real number, and $(c_i)$ is a sequence of integers with $0 \le c_i \le M$ $(i \in \NN:=\ZZ_{\ge 1})$, where $M$ is a given positive integer. The sequence $(c_i)$ is often called an expansion of $x$. Such representations of real numbers have many intimate connections to combinatorics, number theory, probability and ergodic theory, topology and symbolic dynamics. See, e.g., the review papers \cite{[S], K2011, DK2016}.

In the 1990's, Erd\H os, Horv\'ath and Jo\'o \cite{[EHJ]} found bases $q \in (1,2)$ such that $x=1$ has exactly one representation of the above form with digits $c_i$ belonging to $\set{0,1}$. Following this discovery,  the combinatorial and topological structure of (the set of) all numbers $x$ having exactly one representation of the form $\sum_{i=1}^{\infty} c_i\cdot q^{-i}$ with digits $c_i \in \set{0,1,\ldots, M}$ -- and the corresponding set of sequences $(c_i)$ --  was eventually clarified in \cite{[KL1], [KL3], [DVK1], [DVK2]} under the additional assumption that $M <  q\le M+1$.
Later developments led to the necessity to relax this assumption. 
This was done in \cite{[DKL]} for the expansions of $x=1$.
Building on the results of \cite{[DKL]} here we clarify the structure of the set of  real numbers $x$ with a unique expansion for any choice of $M\ge 1$ and $q>1$. Although the general research strategy is the same as in \cite{[DVK1]}, some new arguments are needed and several new properties are uncovered.   

In order to state our results we introduce some notation and terminology.
In this paper we fix a positive integer $M$, and we consider the corresponding alphabet $A:=\set{0,1,\ldots,M}$. The elements of $A$ are often called \emph{digits}. Since $M$ is fixed, we usually do not indicate the dependence on $M$ of the notions we are going to introduce.

A \emph{sequence} always means an element of the set $A^{\NN}$; it will often be written in the form $(c_i)$ or $c_1c_2\cdots$. 
By a \emph{block} or \emph{word}  we mean an element of $\cup_{k \in \NN} A^k$. A block or word has \emph{length} $n$ if it belongs to $A^n$. 
We will also use the \emph{conjugate} or \emph{reflection} of any digit $c$, word $c_1\cdots c_n$ or sequence $(c_i)$, defined by
\begin{equation*}
\overline{c}:=M-c,\quad
\overline{c_1 \cdots c_n}:=\overline{c_1} \cdots \overline{c_n}\qtq{and}
\overline{c_1 c_2 \cdots}:=\overline{c_1} \,
\overline{c_2} \cdots.
\end{equation*} 
Finally, if $c\in A$, then we set $c^+:=c+1$ if $c<M$, and $c^-:=c-1$ if $c>0$, so that $c^+\in A$ and $c^-\in A$.
More generally, if $w=c_1\cdots c_n$ is a word of length $n\ge 2$, then we write 
\begin{equation*}
w^+=(c_1\cdots c_n)^+:=c_1\cdots c_{n-1}c_n^+ 
\end{equation*}
if $c_n<M$, and 
\begin{equation*}
w^-=(c_1\cdots c_n)^-:=c_1\cdots c_{n-1}c_n^-
\end{equation*}
if $c_n>0$.

We will use systematically the lexicographical order between sequences: we write $(a_i) < (b_i)$ or $(b_i) > (a_i)$ if there exists an index $n \in \NN$ such that $a_i=b_i$ for $i < n$, and $a_n < b_n$. 
We also equip for each $n \in \NN$ the set $A^{n}$ of blocks of length $n$ with the lexicographical order.
We apply the usual notation from symbolic dynamics. For example, $0^{\infty}$ would indicate the sequence $c$ with $c_i=0$ for all $i\ge 1$, $(10)^{\infty}$ would indicate the sequence  $c$ with $c_{2i-1}=1$ and $c_{2i}=0$ for all $i\ge 1$, and so on. A sequence $(c_i)$ is called
\begin{itemize}
\item \emph{finite} if it has a last nonzero element, and \emph{infinite} otherwise;
\item \emph{co-finite} if its conjugate is finite, and \emph{co-infinite} otherwise;
\item \emph{doubly infinite} if it is both infinite and co-infinite.
\end{itemize}
This unusual terminology enables us to simplify the statements of many results.
Note that $0^{\infty}$ does not have a last nonzero element and is thus infinite, hence doubly infinite. Similarly, $M^{\infty}$ is a doubly infinite sequence.
The other doubly infinite sequences are those sequences that have both infinitely many digits $c_i>0$  and infinitely many  digits $c_i<M$.

Given a real number $q >1$, an \emph{expansion in base} $q$ \emph{over the alphabet} $A$ (or simply \emph{expansion in base $q$} or \emph{expansion} if there is no risk of confusion) of a real number $x$ is a sequence $c=(c_i)$ satisfying the equality
\begin{equation*}
\pi_q(c):=\sum_{i=1}^{\infty} \frac{c_i}{q^i}=x.
\end{equation*}
We sometimes write $\pi_q(c_1c_2\cdots)$ or $\pi_q((c_i))$ in place of $\pi_q(c)$. 

If $q>M+1$, then there exist numbers $x$ satisfying the inequalities 
\begin{equation}\label{10}
\frac{M}{q^2}+\frac{M}{q^3}+\cdots<x<\frac{1}{q},
\end{equation}
and they have no expansions: for any sequence $(c_i)$ we have $\pi_q(c_1c_2\cdots)<x$ if $c_1=0$, and $\pi_q(c_1c_2\cdots)>x$ if $c_1>0$. The inequalities \eqref{10} also imply that each $c \in A^{\NN}$ is the unique expansion of $\pi_q(c)$. The topological structure of the set $\pi_q \left (A^\NN \right)$ consisting of numbers with an expansion (which is always unique as we just observed) is in this case rather straightforward and resembles that of the classical triadic Cantor set $C:=\set{\sum_{i=1}^{\infty} a_i \cdot 3^{-i}: a_i\in\set{0,2}, i \ge 1}$. The finite sequences in $A^{\NN}$ should be compared with the right endpoints of the connected components of $[0,1]\setminus C$ and the co-finite sequences with its left endpoints. 
For this reason, we restrict ourselves in the sequel of this paper to bases $q \in (1,M+1]$ in which case $J_q:=\pi_q \left(A^\NN \right)=[0,M/(q-1)]$; see \cite{[R],[P],[EJK], [BK]}. Moreover,  every $x \in J_q$ has a lexicographically largest expansion $b(x,q)$ and a lexicographically largest infinite expansion $a(x,q)$, called the \emph{greedy} and \emph{quasi-greedy} expansions of $x$ in base $q$, respectively.
For example, in the  case of the classical binary expansions (so $q=2$ and $M=1$), the fractions $x=\frac{k}{2^m}\in(0,1)$ with positive integers $m$ and $k$ have exactly two expansions: a finite and an infinite one; they are the greedy and quasi-greedy expansions of $x$, respectively.
All other numbers $x\in J_2=[0,1]$ have a unique expansion.

Of course, whether an expansion is greedy or quasi-greedy
depends on $q$ and $M$.
However, when $q$ and $M$ are understood from the context, we simply speak of (quasi-) greedy expansions.

Let us give some examples, by describing  the expansions of $1$ in various bases over the alphabet $A=\set{0,1}$ (so $M$=1); see \cite{[EJK], [DKL]} for details.

\begin{examples}\label{e10}\mbox{}
\begin{enumerate}[\upshape (i)]
\item There exists a base $1<q<2$ such that $\pi_q(1(10)^{\infty})=1$.
In this base $1(10)^{\infty}$ is the unique expansion of $1$, and it is doubly infinite.

\item In the \emph{Tribonacci base} $q \approx 1.839$, defined by the equation $q^3=q^2+q+1$, $1$ has $\aleph_0$ expansions: $(110)^{\infty}$, and the sequences
\begin{equation*}
(110)^k(111)0^{\infty},\quad k=0,1,\ldots .
\end{equation*} 
Here $(110)^{\infty}$ is a doubly infinite expansion, and all other expansions are finite.

\item In the \emph{Golden ratio base} $q=(1+\sqrt{5})/2$, defined by the equation $q^2=q+1$, $1$ has $\aleph_0$ expansions again: $(10)^{\infty}$, and the sequences
\begin{equation*}
(10)^k(11)0^{\infty}\qtq{and}
(10)^k01^{\infty},\quad k=0,1,\ldots .
\end{equation*} 
Here $(10)^{\infty}$ is a doubly infinite expansion.
There are many infinite expansions, but $(10)^{\infty}$ is the only doubly infinite expansion.

\item In every \emph{small base} $1<q<(1+\sqrt{5})/2$, $1$ has $2^{\aleph_0}$ expansions; hence it has $2^{\aleph_0}$ doubly infinite expansions as well. 
\end{enumerate}
\end{examples}
The choice of the alphabet $A$ in the examples above and in general is pertinent. For instance, if $M=2$, then 1 in the Golden ratio base has $2^{\aleph_0}$ expansions including all expansions $(c_i)$ satisfying $c_{4k+1} \cdots c_{4k+4} \in \set{1010, 0120}$ for all $k \ge 0$.  

In \cite{[DKL]} we investigated the set of \emph{univoque bases}
\begin{align*}
&\uu:=\set{q>1\ :\ 1\text{ has a unique expansion in base }q}
\intertext{and the larger set}
&\vv:=\set{q>1\ :\ 1\text{ has a unique doubly infinite expansion in base }q}.
\intertext{It was shown that $\vv$ is closed and that the closure $\uuu$ of  $\uu$ is a \emph{Cantor set}, i.e., a nonempty closed set having neither interior nor isolated points. Moreover, $\uuu$ can be characterized as follows:} 
&\uuu=\set{q>1\ :\ 1\text{ has a unique infinite expansion in base }q}.
\end{align*}

In the following table we illustrate these notions by showing the number of doubly infinite expansions, infinite expansions and all expansions of $1$ in the above four examples:

\begin{center}
\begin{tabular}{|c|c|c|c|c|}
\hline
Examples \ref{e10}& d.i. expansions & i. expansions &all expansions& $q$ belongs to \\
\hline
(i)&1&1&1&$\uu$\\
(ii)&1&1&$\infty$&$\uuu\setminus\uu$\\
(iii)&1&$\infty$&$\infty$&$\vv\setminus\uuu$\\
(iv)&$\infty$&$\infty$&$\infty$&$\left(1,(1+\sqrt{5})/2\right)$\\
\hline
\end{tabular}
\end{center}

The purpose of this paper is to carry out a similar study of the \emph{univoque set}
\begin{align*}
&\uu_q:=\set{x\in J_q\ :\ x\text{ has a unique expansion in base }q}
\intertext{for each fixed base $q>1$.
We will prove for example that $\uu_q$ is a closed set if and only if $q\notin\uuu$.
In order to state the main topological properties of the sets $\uu_q$ we introduce the related sets $\vv_q$ as follows: for $q \in (1,M+1)$, we set}
&\vv_q:=\set{x\in J_q\ :\ x\text{ has a unique doubly infinite expansion in base }q},
\intertext{and for $q=M+1$ we set $\vv_q:=J_q=[0,1]$. If $q=M+1$, numbers $x \in J_q$ with a finite expansion have no doubly infinite expansion, while for $1 <q<M+1$, the quasi-greedy expansion $a(x,q)$ is always doubly infinite; see Proposition \ref{p22} (ii). Hence,}
&\vv_q=\set{x\in J_q\ :\ x\text{ has at most one doubly infinite expansion in base }q},
\intertext{for \emph{each} $q \in (1,M+1]$.} 
\end{align*}

The most important relations between the sets $\uu_q$ and $\vv_q$ are described in the following Theorems \ref{t11}, \ref{t13} and \ref{t14}:

\begin{theorem} \label{t11}\mbox{}
\begin{enumerate}[\upshape (i)]
\item If $q \in \uuu$, then $\overline{\uu_q} = \vv_q$.
\item If $q \in \uuu$, then $\abs{\vv_q\setminus\uu_q}=\aleph_0$
and $\vv_q\setminus \uu_q$ is dense in $\vv_q$.
\item If $q \in \uu$, then each element $x \in \vv_q
\setminus \uu_q$ has exactly $2$ expansions. 
\item If $q \in \uuu \setminus \uu$, then each
element $x \in \vv_q \setminus \uu_q$ has exactly $\aleph_0$ expansions.
\end{enumerate}
\end{theorem}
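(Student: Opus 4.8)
The plan is to pass from numbers to sequences and argue entirely with the quasi-greedy expansion $\alpha=a(1,q)$ of $1$, using the lexicographic descriptions recalled in the preliminaries. Writing $\uu_q'$ and $\vv_q'$ for the sets of expansions of the points of $\uu_q$ and of the \emph{doubly infinite} expansions of the points of $\vv_q$, I will use that $(c_i)\in\uu_q'$ exactly when $\sigma^n(c)<\alpha$ at every $n$ with $c_n<M$ and $\sigma^n(c)>\overline{\alpha}$ at every $n$ with $c_n>0$, whereas $(c_i)\in\vv_q'$ exactly when $(c_i)$ is doubly infinite and the same two inequalities hold in their \emph{weak} forms ($\le\alpha$ and $\ge\overline{\alpha}$); thus $(c_i)$ is the unique doubly infinite expansion of $\pi_q(c)$ precisely when it is simultaneously quasi-greedy and quasi-lazy. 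The hypothesis $q\in\uuu$ enters through $\overline{\alpha}<\sigma^n(\alpha)$ for every $n\ge1$, and the stronger $q\in\uu$ through $\overline{\alpha}<\sigma^n(\alpha)<\alpha$ for every $n\ge1$; the strict lower bound furnishes the room needed for the topological assertions, and the full strictness the rigidity needed for the counting assertions.

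For (i) the inclusion $\uu_q\subseteq\vv_q$ is immediate, since a point with a unique expansion has at most one doubly infinite expansion. I would then show that $\vv_q$ is closed: a lexicographic limit of sequences obeying the weak inequalities again obeys them, and one checks, using $\overline{\alpha}<\sigma^n(\alpha)$, that the limit still corresponds to a point of $\vv_q$; since $\pi_q$ is continuous this gives $\overline{\uu_q}\subseteq\vv_q$. For the reverse inclusion, which is the density statement $\vv_q\subseteq\overline{\uu_q}$, take $x\in\vv_q$ with doubly infinite expansion $(c_i)\in\vv_q'$, keep a long prefix $c_1\cdots c_N$, and splice onto it a tail all of whose shifts lie \emph{strictly} between $\overline{\alpha}$ and $\alpha$; such tails exist because $q\in\uuu=\overline{\uu}$ forces $\uu_q'\neq\emptyset$. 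For each $N$ this produces a point of $\uu_q$ agreeing with $x$ to $N$ digits, hence converging to $x$.

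For (ii), a sequence in $\vv_q'$ fails to lie in $\uu_q'$ only when one weak inequality is an equality, i.e. $\sigma^n(c)=\alpha$ for some $n$ with $c_n<M$ or $\sigma^n(c)=\overline{\alpha}$ for some $n$ with $c_n>0$; either equality pins the tail, so every element of $\vv_q'\setminus\uu_q'$ has the form $w\alpha$ or $w\overline{\alpha}$ for a finite word $w$. There are only countably many such sequences, and they give distinct values, so $\abs{\vv_q\setminus\uu_q}\le\aleph_0$; producing one genuine such point for each of infinitely many admissible prefixes $w$ gives the reverse bound and hence $\abs{\vv_q\setminus\uu_q}=\aleph_0$. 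Density of $\vv_q\setminus\uu_q$ in $\vv_q$ follows by combining the splicing of (i) with a single boundary touch: into the strictly-interior tail one inserts, far out, a block forcing an equality with $\alpha$, producing points of $\vv_q\setminus\uu_q$ arbitrarily close to any prescribed $x\in\vv_q$.

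Finally (iii) and (iv) count the expansions of a point $x\in\vv_q\setminus\uu_q$. After the forced prefix $c_1\cdots c_{n-1}$ determined by the first boundary touch, the remaining expansions of $x$ are governed, via $\pi_q(\alpha)=1$, by the expansions of the number $1$: besides the doubly infinite expansion itself there is the greedy ``switch'' replacing $c_n\alpha$ by $c_n^{+}0^{\infty}$ (and symmetrically $c_n\overline{\alpha}$ by $c_n^{-}M^{\infty}$), together with whatever further branchings the expansions of $1$ permit. If $q\in\uu$ then $1$ has the single expansion $\alpha$, so only the switch occurs and $x$ has exactly $2$ expansions; if $q\in\uuu\setminus\uu$ then, as in the Tribonacci base, $1$ has exactly $\aleph_0$ expansions, all of which are inherited by $x$, giving exactly $\aleph_0$. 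I expect the density argument of (i) to be the main obstacle: one must verify that the spliced sequence satisfies the strict univoque inequalities at \emph{every} index, including those just before the splice where the original sequence only obeyed a weak inequality, and it is precisely the strict bound $\overline{\alpha}<\sigma^n(\alpha)$ guaranteed by $q\in\uuu$ that makes this work. A secondary difficulty is the exact count in (iv), where one must show the inherited family has cardinality exactly $\aleph_0$ rather than $2^{\aleph_0}$, which reflects $q$ lying on the boundary $\uuu$ rather than strictly below it.
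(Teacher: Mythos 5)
Your overall architecture---lexicographic characterizations of $\uu_q'$ and $\vv_q'$, countability in (ii) via the pinned tails $w\alpha$ and $w\overline{\alpha}$, and the expansions of $1$ governing the branching in (iii)--(iv)---parallels the paper's, and the countability argument in (ii) is sound. But the two load-bearing steps are asserted rather than proved, and the justification you give for the first is a non sequitur. For the density $\vv_q\subseteq\overline{\uu_q}$ in (i), you splice a ``strictly interior'' tail onto a prefix $c_1\cdots c_N$ of an arbitrary $(c_i)\in\vv_q'$ and claim such tails exist ``because $q\in\uuu$ forces $\uu_q'\neq\varnothing$''. Non-emptiness of $\uu_q'$ is not the issue: the tail must be compatible with the prefix. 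Indeed $0^{\infty}$ and $M^{\infty}$ always belong to $\uu_q'$, yet if the prefix ends with a boundary touch, say $c_{n+1}\cdots c_N=\alpha_1\cdots\alpha_{N-n}$ with $c_n<M$, then splicing $M^{\infty}$ yields $\alpha_1\cdots\alpha_{N-n}M^{\infty}>\alpha_1\alpha_2\cdots$, violating the strict condition of Lemma~\ref{l42} at index $n$ (dually, $0^{\infty}$ fails after a touch of the reflected kind). What is required is a univoque tail lying \emph{simultaneously} below $\alpha_{N-n+1}\alpha_{N-n+2}\cdots$ for every touch of the first kind and above $\overline{\alpha_{N-m+1}\alpha_{N-m+2}\cdots}$ for every touch of the second kind, and the pointwise strict inequalities $\overline{\alpha_{k+1}\alpha_{k+2}\cdots}<\alpha_1\alpha_2\cdots$ you invoke do not produce one, because the depth at which each strict inequality is witnessed is unbounded in $k$. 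This is exactly where the paper works hardest: it first reduces to prefixes of points with \emph{finite} greedy expansion (Lemma~\ref{l51} approximates any $x\in\uu_q$ by such points, and Lemma~\ref{l61} transfers the result to $B_q$ by reflection), and for those special prefixes it constructs the tail explicitly as $(\alpha_1\cdots\alpha_m\overline{\alpha_1\cdots\alpha_m})^{\infty}$, where the admissible, arbitrarily large $m$ are furnished by the uniform finitary property of Theorem~\ref{t32}~(v)---a strictly stronger input than the pointwise inequalities (Lemma~\ref{l52}).

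In (iii) and (iv), the statement that every expansion of $x\in\vv_q\setminus\uu_q$ other than the quasi-greedy one is ``the switch at the first boundary touch followed by an expansion of $1$'' is precisely what has to be proved, and it needs quantitative estimates, not only the identity $\pi_q(\alpha)=1$. Missing are: (a) a proof that another expansion $(d_i)$ cannot first differ from the greedy expansion $(b_i)$ \emph{before} the first boundary index $n$---the paper derives the contradiction $\sum_{i\ge 1}d_{j+i}q^{-i}>M/(q-1)$, using that $\overline{\alpha_1\alpha_2\cdots}$ is itself a greedy sequence when $q\in\uuu$; (b) a proof that the switch drops the digit by exactly one---a drop of two or more forces $q\le(M+2)/2$, contradicting $q>\tilde q\ge(M+2)/2$ for $q\in\uuu$ (Theorem~\ref{t33}~(ii)); and (c) the analysis of expansions that first differ \emph{after} index $n$, which the paper shows can occur only when $\overline{b_{n+1}b_{n+2}\cdots}=\alpha_1\alpha_2\cdots$ and only if $q\notin\uu$ (this is also what makes ``exactly $2$'' in (iii) come out right). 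Without (a)--(c), the counts ``exactly $2$'' and ``exactly $\aleph_0$'' are not established.
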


The proof of Theorem \ref{t11} will lead to some strengthened forms of (ii), (iii) and (iv).
In order to state it we denote by $A_q$ and $B_q$ the elements $x$ of $\vv_q \setminus \uu_q$ whose greedy expansions $b(x,q)$ are finite and infinite, respectively, so that
\begin{equation*}
\vv_q \setminus \uu_q=A_q\cup B_q.
\end{equation*}
Given a base $q \in (1,M+1]$, we also introduce the \emph{reflection map} $\ell: J_q \to J_q$, given by
\begin{equation*}
\ell(x)= \frac{M}{q-1}-x, \quad x \in J_q.
\end{equation*}
In Part (iv) of the next proposition we refer to the expansions of $1$ in a given base $q \in \uuu\setminus \uu$.
They are listed in Theorem \ref{t32} (vi).

\begin{proposition}\label{p12}\mbox{}
Let $q\in(1,M+1]$ and write $(\alpha_i):=a(1,q)$.
\begin{enumerate}[\upshape(i)]
\item If $q=M+1$, then $\vv_q=J_q=[0,1]$, $B_q=\varnothing$, and  $A_q=\vv_q\setminus\uu_q$ is dense in $\vv_q$.
\item If $q \in \uuu\setminus\set{M+1}$, then both $A_q$ and $B_q$ are dense in $\vv_q$.
Moreover,
\begin{equation}\label{11}
B_q = \ell(A_q),
\end{equation}
and the  greedy expansion of each $x \in B_q$ ends with $\overline{(\alpha_i)}$.
\item If $q\in\uu$, then every $x\in\vv_q\setminus\uu_q$ has exactly two expansions:
\begin{enumerate}[\upshape (a)]
\item if $x \in A_q$ and
$b(x,q)=b_1\cdots b_n0^{\infty}$ with $b_n>0$,
then $(b_1\cdots b_n)^-\alpha_1\alpha_2\cdots$ is the other expansion of $x$; 
\item  if $x\in B_q$, then the expansions of $x$ are the reflections of the expansions of $\ell(x) \in A_q$.
\end{enumerate}
\item If $q\in\uuu\setminus\uu$, then every $x \in \vv_q \setminus \uu_q$  has exactly $\aleph_0$ expansions:
\begin{enumerate}[\upshape (a)]
\item if $x \in A_q$ and
$b(x,q)=b_1\cdots b_n0^{\infty}$ with $b_n>0$,
then the other expansions of $x$ are of the form
$(b_1\cdots b_n)^-d_{n+1}d_{n+2}\cdots,$
where $d_{n+1}d_{n+2}\cdots$ is one of the expansions of $1$ in base $q$;
\item if $x\in B_q$, then the expansions of $x$ are the reflections of the expansions of $\ell(x) \in A_q$.
\end{enumerate}
\end{enumerate}
\end{proposition}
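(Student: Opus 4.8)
The plan is to reduce all four parts to a single structural statement about the elements of $A_q$ and then transport it to $B_q$ by the reflection $\ell$. First I would record the basic symmetry: conjugation $c\mapsto\overline c$ is a bijection between the expansions of $x$ and those of $\ell(x)=M/(q-1)-x$, because $\pi_q(\overline c)=M/(q-1)-\pi_q(c)$. It reverses the lexicographic order, interchanges finite and co-finite sequences, and preserves the doubly infinite ones; hence it maps $\uu_q$ onto $\uu_q$ and $\vv_q$ onto $\vv_q$, and (once the finite/infinite dichotomy of greedy expansions is settled) it will interchange $A_q$ and $B_q$. This reduces parts (iii)(b), (iv)(b) and the identity $B_q=\ell(A_q)$ in (ii) to the assertions about $A_q$. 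The inclusion ``$\supseteq$'' of the $A_q$ description is immediate: if $x\in A_q$ with $b(x,q)=b_1\cdots b_n0^\infty$ and $\pi_q(d)=1$, a one-line computation gives $\pi_q\big((b_1\cdots b_n)^-d\big)=x-q^{-n}+q^{-n}=x$.

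The heart is the converse. Let $c\neq b(x,q)$ be an expansion of a point $x\in\vv_q\setminus\uu_q$; since $b(x,q)$ is the largest expansion, $c<b(x,q)$, so there is a first index $k$ with $c_k<b_k$, and then
\[\pi_q(c_{k+1}c_{k+2}\cdots)=(b_k-c_k)+\pi_q(b_{k+1}b_{k+2}\cdots)=:r.\]
The decisive step, which I expect to carry essentially all the weight, is the bound $r\le 1$ when $c$ is finite: if $1<r<M/(q-1)$, then by Proposition~\ref{p22}(ii) the quasi-greedy expansion $a(r,q)$ is doubly infinite, whence $b_1\cdots b_{k-1}c_k\,a(r,q)$ is a doubly infinite expansion of $x$ distinct from $a(x,q)$ (equality would force $r=1$), contradicting $x\in\vv_q$; and $r=M/(q-1)$ is impossible for finite $c$, since that value requires the tail $M^{\infty}$. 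Together with the trivial bound $r\ge 1$ this gives $r=1$, forcing $b_k-c_k=1$ and $\pi_q(b_{k+1}b_{k+2}\cdots)=0$, i.e. $b(x,q)=b_1\cdots b_k0^\infty$ is finite with $k=n$, and $c_{n+1}c_{n+2}\cdots$ is an expansion of $1$. Thus: \emph{any point of $\vv_q\setminus\uu_q$ admitting a finite alternative expansion has finite greedy expansion, and that alternative is of the form $(b_1\cdots b_n)^-d$ with $d$ a finite expansion of $1$}.

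The counting statements now follow. For $x\in A_q$ the alternative expansions are in bijection with the expansions of $1$: the unique one when $q\in\uu$ (by definition of $\uu$), giving the two expansions of (iii)(a); and exactly $\aleph_0$ when $q\in\uuu\setminus\uu$ (Theorem~\ref{t32}(vi)), giving (iv)(a). To get $B_q=\ell(A_q)$ and the terminal block $\overline{(\alpha_i)}$ of (ii), I would first note that for $q\in\uuu$ the lexicographically smallest expansion of $1$ is $(\alpha_i)$ itself: a finite expansion of $1$ below $(\alpha_i)$ would, after lowering its last digit and appending $(\alpha_i)$, yield a second infinite expansion of $1$, which is impossible for $q\in\uuu$. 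Hence the smallest expansion of $x\in A_q$ is $(b_1\cdots b_n)^-(\alpha_i)=a(x,q)$, which is doubly infinite, so its conjugate—the greedy expansion of $\ell(x)$—is infinite and ends with $\overline{(\alpha_i)}$; thus $\ell(x)\in B_q$, and $A_q$ has no co-finite alternative. The reverse inclusion uses the lemma of the previous paragraph: a co-finite alternative of $x\in B_q$ conjugates to a finite alternative of $\ell(x)$, forcing $\ell(x)\in A_q$. Part (i) is the degenerate case $q=M+1$, $M/(q-1)=1$: this is the integer-base situation, where the non-unique points are precisely those with a finite expansion, all of finite greedy type, so $B_q=\varnothing$ and $A_q=\vv_q\setminus\uu_q$.

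Finally, for the density claims I would show that the finite-greedy points of $\vv_q\setminus\uu_q$ cluster at every point of $\vv_q$: given $v\in\vv_q$ with doubly infinite expansion $(a_i)=a(v,q)$, the truncations $\pi_q(a_1\cdots a_N0^\infty)$ (with $a_N>0$) converge to $v$ and lie in $A_q$. Admissibility—that truncation creates no spurious second doubly infinite expansion at the junction—is governed by the inequality $\pi_q(a_{k+1}a_{k+2}\cdots)\ge M/(q-1)-1$ satisfied by every point of $\vv_q$, which is exactly the failure, at each position, of the branching construction of the second paragraph; the exceptional junctions, where a tail of $(a_i)$ equals $\overline{(\alpha_i)}$, form a negligible set that can be avoided, so $A_q$ is dense, and density of $B_q$ follows from $B_q=\ell(A_q)$ and the fact that $\ell$ is a homeomorphism of $\vv_q$. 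The main obstacles I anticipate are the second-doubly-infinite-expansion argument (the engine of the exact counts) and this junction analysis for density.
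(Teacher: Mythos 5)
Your overall strategy is legitimately different from the paper's (you work with values and the ``at most one doubly infinite expansion'' definition of $\vv_q$, where the paper works lexicographically via Definition~4.3 and Propositions~2.7/3.1), and your core dichotomy is sound: for an alternative expansion $c$ of $x\in\vv_q\setminus\uu_q$ branching at index $k$, the value $r=\pi_q(c_{k+1}c_{k+2}\cdots)$ must be $1$ or $M/(q-1)$, since $1<r<M/(q-1)$ would produce a second doubly infinite expansion. But this dichotomy leaves alive the case $r=M/(q-1)$, i.e.\ alternatives of the form $b_1\cdots b_{k-1}c_kM^{\infty}$, and your proof never validly excludes them at points of $A_q$. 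Your counting claim ``for $x\in A_q$ the alternative expansions are in bijection with the expansions of $1$'' tacitly assumes they do not occur, and the justification you give later is circular: you assert that the smallest expansion of $x\in A_q$ is $(b_1\cdots b_n)^-(\alpha_i)$, but a tail-$M^{\infty}$ alternative branching at some $k<n$ would begin $b_1\cdots b_{k-1}c_k$ with $c_k<b_k$ and hence be lexicographically \emph{smaller} than $(b_1\cdots b_n)^-(\alpha_i)$; so identifying the smallest expansion already presupposes the conclusion (``$A_q$ has no co-finite alternative'') that you then draw from it. Both the exact counts in (iii)(a), (iv)(a) and the identity $B_q=\ell(A_q)$ rest on this unproved step. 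The gap is fillable with your own tools: for $q\in\uuu\setminus\set{M+1}$ one has $q>\tilde q\ge (M+2)/2$, so $1<M/(q-1)<2$; then $k=n$ is impossible ($M/(q-1)$ is not an integer), and $k<n$ forces $b_k-c_k=1$ and $\pi_q(b_{k+1}\cdots b_n0^{\infty})=M/(q-1)-1$, whence $\overline{b_{k+1}\cdots b_n}\,M^{\infty}$ is a co-finite expansion of $1$, contradicting Theorem~3.2~(vi) (or uniqueness when $q\in\uu$). This is precisely the work the paper's proof does when it shows $j\ge n$ and analyzes the case $j>n$; your write-up omits it.

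The density argument has a second genuine gap. Truncating $a(v,q)$ at an arbitrary $N$ with $a_N>0$ does not in general give a point of $\vv_q$: one needs $\pi_q(a_{j+1}\cdots a_N0^{\infty})\ge M/(q-1)-1$ for \emph{every} $j<N$ with $a_j>0$, and truncation strictly lowers these tail values, so admissible indices $N$ must be chosen with care; their existence in infinite number is exactly what the paper's Lemma~2.10~(i) provides and what Lemma~5.1 uses, whereas you only gesture at it. Worse, at every $v\in B_q$ the quasi-greedy expansion ends with $\overline{(\alpha_i)}$ (by your own part (ii)), so the required inequality fails at that junction for \emph{all} sufficiently large $N$: no truncation of $a(v,q)$ lies in $\vv_q$, and these points cannot be ``avoided,'' since density of $A_q$ in $\vv_q$ requires $A_q$ to accumulate precisely there. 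The paper closes this hole with a separate construction (Lemma~5.2: each $x\in A_q$ is an increasing limit of points of $\uu_q$ with expansions $(b_1\cdots b_n)^-(\alpha_1\cdots\alpha_{m_i}\overline{\alpha_1\cdots\alpha_{m_i}})^{\infty}$), which together with Lemma~6.1 and reflection gives $\overline{\uu_q}=\vv_q$ and hence density of both $A_q$ and $B_q$; your proposal contains no substitute for that step.
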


In Part (iv) of our next theorem we refer to the expansions of $1$ in a given base $q \in \vv \setminus \uuu$.
They are listed in Theorem \ref{t33} (vi).
We recall that if $q\in\vv\setminus \uuu$, then there is a smallest integer $k\ge 1$ such that
$\alpha_{k+1}\alpha_{k+2}\cdots=\overline{\alpha_1\alpha_2\cdots}$; see for instance Proposition~~\ref{p31}. We also denote by $\tilde q= \tilde q (M)$ the smallest element of $\vv$ (see Theorem \ref{t33} (ii)).
We will use these notations in the statement of the following theorem.

\begin{theorem}\label{t13}
Let $q \in \vv \setminus \uuu$. 
\begin{enumerate}[\upshape (i)]
\item The sets $\uu_q$ and $\vv_q$ are closed.
\item $\abs{\vv_q \setminus \uu_q}= \aleph_0$ and $\vv_q \setminus \uu_q$ is a discrete set, dense in $\vv_q$. 
\item Each element $x \in \vv_q \setminus \uu_q$ has exactly $\aleph_0$
expansions and a finite greedy expansion.
\item Let  $x \in \vv_q \setminus \uu_q$,   and let $b_n$ be the last nonzero element of $(b_i):=b(x,q)$.
Then $x$ has exactly $\aleph_0$ other expansions of the form
\begin{equation*}
(b_1\cdots b_n)^-d_{n+1}d_{n+2}\cdots,
\end{equation*}
where $d_{n+1}d_{n+2}\cdots$ is one of the expansions of $1$ in base $q$. 
\\
Furthermore, if $q=\tilde q$, $M$ is even and $b_n \ge 2$, then $b_1 \cdots b_{n-1} (b_n-2)M^{\infty}$ is also an expansion of $x$.   
\\
Finally, if 
\begin{equation*}
n>k,\quad b_{n-k}>0\qtq{and}
b_{n-k+1}\cdots b_n= \overline{(\alpha_1\cdots\alpha_k)^-},
\end{equation*} 
then $x$ has one more expansion: 
\begin{equation*}
(b_1\cdots b_{n-k})^-M^{\infty}.
\end{equation*}
\end{enumerate}
\end{theorem}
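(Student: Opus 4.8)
\smallskip
\noindent\textit{Proof strategy.}
Write $(\alpha_i):=a(1,q)$. Since $q\in\vv\setminus\uuu$, there is a smallest $k\ge 1$ with $\alpha_{k+1}\alpha_{k+2}\cdots=\overline{(\alpha_i)}$; hence $(\alpha_i)$ is periodic with period $\alpha_1\cdots\alpha_k\overline{\alpha_1\cdots\alpha_k}$ and $\alpha_k>0$ (Proposition~\ref{p31}, Theorem~\ref{t33}). Note also $q<M+1$ here, so quasi-greedy expansions are doubly infinite (Proposition~\ref{p22}). The plan is to prove (iii) and (iv) first and then read off the topological statements (i)--(ii). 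I will use freely the lexicographic characterizations of greedy, quasi-greedy and unique expansions in terms of $(\alpha_i)$, the relation $a(x,q)=(b_1\cdots b_n)^-\alpha_1\alpha_2\cdots$ valid whenever $b(x,q)=b_1\cdots b_n0^\infty$ is finite, and the list of the $\aleph_0$ expansions of $1$ from Theorem~\ref{t33}~(vi), of which exactly one, $(\alpha_i)$, is doubly infinite.

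The combinatorial core is a single estimate. If $(c_i)$ is an expansion of $x\in\vv_q\setminus\uu_q$ that first disagrees with $b(x,q)=(b_i)$ at position $m$, then $c_i=b_i$ for $i<m$, $c_m\le b_m-1$, and comparing tails gives $\pi_q(c_mc_{m+1}\cdots)=\pi_q(b_mb_{m+1}\cdots)$; since every tail has value at most $M/(q-1)$ this yields
\[
\pi_q(b_{m+1}b_{m+2}\cdots)\le\frac{M}{q-1}-1=\pi_q\bigl(\overline{(\alpha_i)}\bigr).
\]
First I use this to show $b(x,q)$ is finite. Were it infinite it could not end in $M^\infty$ (a greedy expansion does so only at $x=M/(q-1)$), hence it would be doubly infinite, equal to $a(x,q)$, and, as $x\in\vv_q$, the unique doubly infinite expansion; the reflection condition valid at $m$ (here $b_m>0$) gives the reverse inequality, so the values are equal, and since $b_{m+1}b_{m+2}\cdots$ is a doubly infinite greedy tail it must be the quasi-greedy expansion $\overline{(\alpha_i)}$ of its value. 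But then the greedy digit at position $m+k$ is $\overline{\alpha_k}<M$ while the ensuing tail equals $\overline{\alpha_{k+1}\alpha_{k+2}\cdots}=(\alpha_i)$, contradicting greediness. Thus $b(x,q)=b_1\cdots b_n0^\infty$ with $b_n>0$. I then classify the remaining expansions by the position $m\le n$ of first disagreement. If $m=n$, the tail $c_{n+1}c_{n+2}\cdots$ is an expansion of the positive integer $b_n-c_n$, which lies in $\{1,2\}$ because $M/(q-1)<3$ for $q\ge\tilde q$: the value $1$ gives the main family $(b_1\cdots b_n)^-d_{n+1}d_{n+2}\cdots$ with $d_{n+1}d_{n+2}\cdots$ ranging over the $\aleph_0$ expansions of $1$, while the value $2$ is possible only when $2=M/(q-1)$, i.e.\ $q=\tilde q$ with $M$ even, where $2$ has the unique expansion $M^\infty$, producing $b_1\cdots b_{n-1}(b_n-2)M^\infty$. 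If $m<n$, then $b_n>0$ forces $\pi_q(b_{m+1}\cdots b_n0^\infty)>0$, and a valid completion needs $1+\pi_q(b_{m+1}\cdots b_n0^\infty)\le M/(q-1)$; a strict inequality would place this value in the open interval $(0,M/(q-1))$, whose doubly infinite quasi-greedy expansion would give a second doubly infinite expansion of $x$ disagreeing with $a(x,q)$ at position $m<n$ --- impossible since $x\in\vv_q$ (and $c_m\le b_m-2$ is excluded, as it would require expanding a value $\ge2\ge M/(q-1)$ with $\pi_q(b_{m+1}\cdots b_n0^\infty)>0$). Hence equality holds, the completion is $M^\infty$, and $\pi_q(b_{m+1}\cdots b_n0^\infty)=\pi_q(\overline{(\alpha_i)})$ pins down $m=n-k$ and $b_{n-k+1}\cdots b_n=\overline{(\alpha_1\cdots\alpha_k)^-}$, giving $(b_1\cdots b_{n-k})^-M^\infty$. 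Counting the three families yields exactly $\aleph_0$ expansions, which is (iii) and (iv).

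For the topology I would represent $\vv_q=\pi_q(S)$, where $S$ is the set of sequences $(a_i)$ with $a_{n+1}a_{n+2}\cdots\le(\alpha_i)$ whenever $a_n<M$ and $\overline{a_{n+1}a_{n+2}\cdots}\le(\alpha_i)$ whenever $a_n>0$; these are precisely the quasi-greedy expansions of the points of $\vv_q$. Since each defining inequality is a closed lexicographic condition and $(\alpha_i)$ is periodic, $S$ is a closed, hence compact, subset of $A^{\NN}$, so $\vv_q=\pi_q(S)$ is closed. By (iii) every $x\in\vv_q\setminus\uu_q$ has quasi-greedy expansion $(b_1\cdots b_n)^-\alpha_1\alpha_2\cdots$, whose $(\alpha_i)$-tail is extremal in $S$; passing to any nearby admissible sequence above or below it requires altering a full period of $(\alpha_i)$, which produces a gap of order $q^{-n}$ on each side and shows that $x$ is isolated in $\vv_q$. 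Thus $\vv_q\setminus\uu_q$ is discrete and, being indexed by finite greedy expansions, countable; it is dense in $\vv_q$ because any quasi-greedy expansion in $S$ can be truncated and completed with a suitable period tail to the quasi-greedy expansion of a nearby point of $\vv_q\setminus\uu_q$. Finally $\uu_q$ is closed: a limit of points of $\uu_q$ lies in the closed set $\vv_q$ and cannot be one of the isolated points of $\vv_q\setminus\uu_q$, so it lies in $\uu_q$. This gives (i) and (ii).

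I expect the main obstacle to be the exhaustiveness in (iv), i.e.\ that the listed sequences are the only expansions. Exhibiting each of them is a direct computation, but excluding all others rests entirely on the hypothesis $q\in\vv$, through uniqueness of the doubly infinite expansion, to kill every ``interior'' branching, and on matching the two boundary coincidences --- the equality $2=M/(q-1)$ for $q=\tilde q$ with $M$ even, and $\pi_q\bigl(\overline{(\alpha_1\cdots\alpha_k)^-}0^\infty\bigr)=\pi_q(\overline{(\alpha_i)})$ --- exactly to the stated digit patterns. A secondary point needing care is the identification $\pi_q(S)=\vv_q$ together with the uniform gap estimate underlying the isolation used in (ii).
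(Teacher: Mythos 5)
Your plan for (iii)--(iv) --- classify the other expansions of $x$ by the first position $m$ of disagreement with $b(x,q)$, and use $M/(q-1)\le 2$ (with equality exactly for $q=\tilde q$, $M$ even) to see that the branching digit drops by $1$ or, exceptionally, by $2$ --- is in substance the paper's own argument. But you run it with the introduction's definition of $\vv_q$ (uniqueness of the doubly infinite expansion), whereas the paper runs it with the lexicographic characterization of Lemma \ref{l47}, and this difference is where two genuine gaps sit.

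First, the step ``the reflection condition valid at $m$ gives the reverse inequality'' is a non sequitur: the reflection condition is the \emph{lexicographic} inequality $\overline{b_{m+1}b_{m+2}\cdots}\le(\alpha_i)$, and lexicographic inequalities do not yield value inequalities (for $M=1$, $q=1.1$ one has $01^\infty<\alpha(q)$ lexicographically while $\pi_q(01^\infty)\approx 9>1$). The implication you need is true for $q\in\vv\setminus\uuu$, but only because $(\alpha_1\cdots\alpha_k)^-M^\infty$ is the \emph{smallest} expansion of $1$ (Theorem \ref{t33}~(vi)), so that $b\bigl(M/(q-1)-1,q\bigr)=\overline{(\alpha_1\cdots\alpha_k)^-}\,0^\infty$ --- precisely the ingredient on which the paper's proof turns. (Your own device from the $m<n$ case repairs this more cleanly: if $\pi_q(b_{m+1}b_{m+2}\cdots)<M/(q-1)-1$, then $(b_1\cdots b_m)^-a\bigl(1+\pi_q(b_{m+1}b_{m+2}\cdots),q\bigr)$ is a second doubly infinite expansion of $x$.) The same lexicographic-versus-value confusion affects the ``pins down'' step: the value identity $\pi_q(b_{m+1}\cdots b_n0^\infty)=\pi_q\bigl(\overline{(\alpha_i)}\bigr)$ alone does \emph{not} force $n-m=k$, since for instance $\overline{\alpha_1\cdots\alpha_{2k}}\;\overline{(\alpha_1\cdots\alpha_k)^-}\,0^\infty$ (the reflection of the $N=1$ co-finite expansion of $1$ in Theorem \ref{t33}~(vi), of length $3k$) has the same value. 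To exclude it you must observe that $b_{m+1}\cdots b_n0^\infty$, being a tail of a greedy sequence, is itself greedy, hence equals the unique greedy expansion $\overline{(\alpha_1\cdots\alpha_k)^-}\,0^\infty$ of $M/(q-1)-1$; without this, exhaustiveness of your list --- the whole point of (iv) --- is not established.

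Second, and more structurally: your topological argument identifies $\vv_q$ with $\pi_q(S)$, where $S$ is the lexicographically defined set of Definition \ref{d43}, while your (iii)--(iv) were proved for the set defined by uniqueness of doubly infinite expansions. The equivalence of these two definitions is Theorem \ref{t72}~(iii), and in the paper the direction ``lexicographic conditions $\Rightarrow$ at most one doubly infinite expansion'' is deduced \emph{from} Theorem \ref{t13} itself, proved under Definition \ref{d43}. Your exclusion arguments cannot be run under the lexicographic hypothesis, because there the uniqueness you invoke is exactly what has to be proved. So as written, your (iii)--(iv) and your (i)--(ii) concern two a priori different sets, and closing the loop requires redoing the classification lexicographically (via Lemma \ref{l47} and the inequality $b_{j+1}\cdots b_n>\overline{\alpha_1\cdots\alpha_{n-j}}$), which is what the paper does; this is far more than the ``secondary point needing care'' you describe. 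Relatedly, your isolation claim (``gap of order $q^{-n}$'') and density claim in (ii) compress real work: in the paper these are Lemmas \ref{l49} and \ref{l51}, the latter requiring the choice of truncation points from Lemma \ref{l210} and a verification that the truncated points actually lie in $\vv_q$.
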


\begin{theorem}\label{t14}
If $q \in (1, M+1] \setminus \vv$, then $\uu_q = \overline{\uu_q}= \vv_q$.
\end{theorem}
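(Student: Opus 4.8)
The plan is to reduce everything to the single inclusion $\vv_q\subseteq\uu_q$. Indeed, $\uu_q\subseteq\vv_q$ holds trivially, since a number with only one expansion has at most one doubly infinite expansion. Moreover $M+1\in\vv$, because the only expansion of $1$ in base $M+1$ is the doubly infinite sequence $M^{\infty}$; hence the hypothesis $q\notin\vv$ forces $q\in(1,M+1)$, and so by Proposition \ref{p22}(ii) the quasi-greedy expansion $a(x,q)$ of every $x\in J_q$ is doubly infinite. Granting the (generally valid) closedness of $\vv_q$ — which is available here anyway since $\uuu\subseteq\vv$ gives $q\notin\uuu$ — once $\vv_q=\uu_q$ is established we immediately obtain $\uu_q=\overline{\uu_q}=\vv_q$. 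So the whole content is the emptiness of $\vv_q\setminus\uu_q$.

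To prove this I argue by contradiction, manufacturing a second doubly infinite expansion. Suppose $x\in\vv_q\setminus\uu_q$ and write $(a_i):=a(x,q)$ and $(\alpha_i):=a(1,q)$. Since $q<M+1$, the sequence $(a_i)$ is doubly infinite, and as $x\in\vv_q$ it is the \emph{unique} doubly infinite expansion of $x$. Because $x\notin\uu_q$ has several expansions, $(a_i)$ is not univoque; so when one compares the lexicographic characterization of $\vv_q$ (certain inequalities on $(a_i)$, in non-strict form), which $(a_i)$ satisfies, with the characterization of univoque sequences (the very same inequalities, but strict), which $(a_i)$ fails, the failure of strictness must occur as an \emph{equality}. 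Concretely, there is an index $n$ with either
\begin{equation*}
a_n<M\quad\text{and}\quad a_{n+1}a_{n+2}\cdots=\alpha_1\alpha_2\cdots,
\end{equation*}
or
\begin{equation*}
a_n>0\quad\text{and}\quad \overline{a_{n+1}a_{n+2}\cdots}=\alpha_1\alpha_2\cdots;
\end{equation*}
that is, a tail of $(a_i)$ equals either $(\alpha_i)$ or its conjugate $\overline{(\alpha_i)}$.

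Now I invoke $q\notin\vv$: the number $1$ possesses a doubly infinite expansion $(\gamma_i)\neq(\alpha_i)$ in base $q$. In the first case, replace the tail by setting $(c_i):=a_1\cdots a_n\gamma_1\gamma_2\cdots$. Since $\pi_q(\gamma_1\gamma_2\cdots)=1=\pi_q(\alpha_1\alpha_2\cdots)$ we get $\pi_q(c)=\pi_q(a)=x$, so $(c_i)$ is an expansion of $x$; it differs from $(a_i)$ and, being a finite prefix followed by the doubly infinite $(\gamma_i)$, is itself doubly infinite. In the second case $a_{n+1}a_{n+2}\cdots=\overline{(\alpha_i)}$, and since $\pi_q(\overline{(\gamma_i)})=M/(q-1)-1=\pi_q(\overline{(\alpha_i)})=\ell(1)$, the sequence $(c_i):=a_1\cdots a_n\overline{\gamma_1}\,\overline{\gamma_2}\cdots$ is once more a doubly infinite expansion of $x$ distinct from $(a_i)$. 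Either way $x$ has two distinct doubly infinite expansions, contradicting $x\in\vv_q$. Hence $\vv_q\setminus\uu_q=\varnothing$, and with the closedness of $\vv_q$ this gives $\uu_q=\overline{\uu_q}=\vv_q$.

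The main obstacle, and the step demanding the most care, is the equality statement: that the non-uniqueness of $x$ together with $x\in\vv_q$ forces a tail of $a(x,q)$ to coincide \emph{exactly} with $(\alpha_i)$ or $\overline{(\alpha_i)}$, rather than merely to be bounded by it. This is precisely where the two lexicographic descriptions — of $\uu_q$ with strict inequalities and of $\vv_q$ with the corresponding non-strict ones — are indispensable, and it is the only point at which $x\in\vv_q$ and $x\notin\uu_q$ are used together. The remaining verifications, that tail-replacement preserves both the represented value and the property of being doubly infinite, are routine, and the closedness of $\vv_q$ is supplied by the general structure theory. Notice that the argument isolates the role of $q\notin\vv$ sharply: this hypothesis enters only to furnish the competing expansion $(\gamma_i)$ of $1$, and for $q\in\vv$ no such $(\gamma_i)$ exists, in agreement with $\vv_q\setminus\uu_q\neq\varnothing$ in the regimes of Theorems \ref{t11} and \ref{t13}.
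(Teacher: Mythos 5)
Your splicing idea --- manufacturing a second doubly infinite expansion of $x$ by replacing a tail of $a(x,q)$ equal to $\alpha(q)$ or $\overline{\alpha(q)}$ with a competing doubly infinite expansion $(\gamma_i)$ of $1$ --- is attractive and genuinely different from the paper's proof (the paper shows instead that for $q\notin\uuu$ every $x\in\vv_q\setminus\uu_q$ has a \emph{finite} greedy expansion, so $a(x,q)$ ends with $\alpha(q)$, and then $q\notin\vv$ contradicts Lemma \ref{l47}). However, as written your argument has a genuine gap: it conflates the two definitions of $\vv_q$. Your final contradiction (``$x$ has two distinct doubly infinite expansions, contradicting $x\in\vv_q$'') uses the introduction's definition of $\vv_q$ (at most one doubly infinite expansion), while your key step --- that $(a_i)=a(x,q)$ satisfies the non-strict inequalities, in particular $\overline{a_{n+1}a_{n+2}\cdots}\le\alpha_1\alpha_2\cdots$ whenever $a_n>0$ --- is the characterization of the \emph{lexicographically defined} $\vv_q$ of Definition \ref{d43} and Lemma \ref{l47}. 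The equivalence of the two definitions is exactly Theorem \ref{t72} (iii), which the paper proves \emph{after} Theorem \ref{t14} and indeed \emph{using} Theorem \ref{t14} for one implication; so your proposal is circular relative to the paper's development, or, read charitably, it silently assumes the nontrivial implication ``at most one doubly infinite expansion $\Rightarrow$ \eqref{46}'', which costs the paper half a page of work with lazy expansions. To pinpoint where this bites: if the univoque condition \eqref{41} fails at some $n$, then quasi-greediness alone (Proposition \ref{p27} (ii)) forces the equality $a_{n+1}a_{n+2}\cdots=\alpha_1\alpha_2\cdots$, so that half of your dichotomy is sound; but if only \eqref{42} fails, nothing you have established bounds $\overline{a_{n+1}a_{n+2}\cdots}$ from above by $\alpha_1\alpha_2\cdots$, and the equality you need does not follow.

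The gap is repairable without circularity, because the implication you need does not depend on Theorem \ref{t14}: either reproduce the ``conversely'' half of the proof of Theorem \ref{t72} (iii), or argue in the spirit of your own splice. Namely, if $a_n>0$ and $\overline{a_{n+1}a_{n+2}\cdots}>\alpha_1\alpha_2\cdots$, note that $(a_{n+i})$ is co-infinite, so $\overline{(a_{n+i})}$ is infinite; by maximality of $\alpha(q)$ among infinite sequences of value at most $1$ this gives $\pi_q(\overline{(a_{n+i})})>1$, i.e. $y:=\pi_q((a_{n+i}))<M/(q-1)-1$, hence $y+1\in J_q$ and $a_1\cdots a_{n-1}(a_n-1)$ followed by the doubly infinite sequence $a(y+1,q)$ (Proposition \ref{p22} (ii)) is a second doubly infinite expansion of $x$ --- again contradicting the introduction's definition. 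One smaller repair is also needed: the closedness you invoke is proved in the paper (Lemma \ref{l48} (iii)) for the lexicographic $\vv_q$, so it suffers the same definitional issue; use instead that $\uu_q$ itself is closed for $q\notin\uuu$ (Lemma \ref{l71} (ii)), which combined with $\uu_q=\vv_q$ yields $\uu_q=\overline{\uu_q}=\vv_q$. The remaining ingredients of your proof ($M+1\in\vv$, the existence of $(\gamma_i)\ne\alpha(q)$ for $q\in(1,M+1)\setminus\vv$, and the tail-replacement computations) are correct.
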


\begin{remarks}\label{r15}\mbox{}

\begin{enumerate}[\upshape (i)]
\item While $\uu$, $\uuu$ and $\vv$ are three different sets, we infer from Theorems \ref{t11}, \ref{t13} and \ref{t14} that at least two of the three sets $\uu_q$, $\uuuq$ and $\vv_q$ coincide for each $q \in (1,M+1]$.

\item The set $\vv$ has Lebesgue measure zero and Hausdorff dimension one, 
 because $\vv\setminus\uu$ is countable by  \cite{[DKL]} and $\uu$ is a Lebesgue null set of Hausdorff dimension one by \cite[Theorem 1]{[EJ]} and \cite[Theorem 1.6]{[KKL]}. Theorem \ref{t14} implies that $\uu_q = \overline{\uu_q}= \vv_q$ except for a set of bases $q \in (1,M+1]$ of Lebesgue measure zero and Hausdorff dimension equal to one.

\item If $q=M+1$, then $\uuuq=[0,1]$, and $\uu_q$ has Lebesgue measure one.
Suppose next that $q\in(1,M+1)$ is a non-integer. If $x \in \uu_q$ has unique expansion $(c_i) \not= M^{\infty}$, then there exists an index $N$ such that $(c_{N+i})=c_{N+1}c_{N+2} \cdots$ is the unique expansion of a number belonging to $[0,1)$, whence $c_{N+i} < q$ for each $i \ge 1$. It follows from Proposition 4.1 in \cite{[DVK2]} (see also \cite{[GS]}) that $\uu_q$ can be covered by countably many sets of the same Hausdorff dimension less than one. Hence the sets $\uu_q$, $\uuuq$ and $\vv_q$ have Hausdorff dimension less than one and are also nowhere dense because $\vv_q \setminus  \uu_q$ is (at most) countable and $\vv_q$ is closed. If $q \in (1,M+1)$ is an integer and $M < 2q-2$, then the unique expansion  of a number $x \in \uu_q$ has a tail  belonging to $\set{0^{\infty}, M^{\infty}} \cup \set{M-q+1,\ldots,q-1}^{\infty}$ as follows, for instance, from Lemma~\ref{l42}. Conversely, a sequence belonging to $\set{M-q+1,\ldots,q-1}^{\infty}$ with no tail equal to $(M-q+1)^{\infty}$ or $(q-1)^{\infty}$  is the (unique) expansion of an element in $\uu_q$. Straightforward alterations in the usual calculation of the Hausdorff dimension of the triadic Cantor set $C$ show that $\dim_H(\uu_q)=\log(2q-M-1)/\log(q)$, see for instance \cite{[F]}.   Hence, also in this case, $\uu_q$, $\uuuq$ and $\vv_q$ are nowhere dense and have Hausdorff dimension less than one. Finally, if $q \in (1,M+1)$ is an integer and $ M \ge 2q-2$, then $\uu_q=\set{0,M/(q-1)}$. More precise information about the Hausdorff dimension of $\uu_q$ and the behaviour of the function $q \mapsto \dim_H(\uu_q)$ on $(1,M+1]$, albeit at the cost of a much more elaborate analysis,  can be found in \cite{[KKL]} and \cite{[AK]}. 
\end{enumerate}
\end{remarks}

We mention two corollaries of Theorems \ref{t11}, \ref{t13} and \ref{t14}:

\begin{corollary}\label{c16}
Let $q\in(1,M+1]$.
Then $\uu_q$ is closed if and only if $q\notin\uuu$. 
\end{corollary}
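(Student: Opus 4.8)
The goal is to prove Corollary \ref{c16}: for $q \in (1, M+1]$, the set $\uu_q$ is closed if and only if $q \notin \uuu$.

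The plan is to derive this as a direct logical consequence of the three structure theorems (\ref{t11}, \ref{t13}, \ref{t14}), which together partition the parameter interval $(1, M+1]$ into three exhaustive and mutually exclusive cases according to where $q$ sits relative to the nested sets $\uu \subseteq \uuu \subseteq \vv$. The strategy is to handle each case separately and read off whether $\uu_q = \overline{\uu_q}$.

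First I would establish the ``if'' direction, i.e.\ $q \notin \uuu \implies \uu_q$ is closed. Assuming $q \notin \uuu$, there are two sub-cases. Either $q \in \vv \setminus \uuu$, in which case Theorem \ref{t13}(i) states directly that $\uu_q$ is closed; or $q \in (1, M+1] \setminus \vv$ (the complement of $\vv$ altogether, which is a subset of the complement of $\uuu$ since $\uuu \subseteq \vv$), in which case Theorem \ref{t14} gives $\uu_q = \overline{\uu_q} = \vv_q$, so $\uu_q$ is again closed. Since $\uuu \subseteq \vv$, the condition $q \notin \uuu$ means $q$ lies in exactly one of these two sub-cases, so both are covered.

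For the ``only if'' direction I would prove the contrapositive: $q \in \uuu \implies \uu_q$ is not closed. Here I invoke Theorem \ref{t11}. Part (i) gives $\overline{\uu_q} = \vv_q$, while parts (ii)--(iv) guarantee that $\vv_q \setminus \uu_q \neq \varnothing$ (indeed $\abs{\vv_q \setminus \uu_q} = \aleph_0$ by (ii)). Hence $\overline{\uu_q} = \vv_q \supsetneq \uu_q$, which shows $\uu_q$ fails to be closed. I would need only to confirm that $\vv_q \setminus \uu_q$ is genuinely nonempty; this is immediate from the cardinality statement in Theorem \ref{t11}(ii), valid for all $q \in \uuu$.

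The main subtlety, rather than any serious obstacle, is simply to verify that the three cases $q \in \uuu$, $q \in \vv \setminus \uuu$, and $q \in (1,M+1] \setminus \vv$ exhaust $(1,M+1]$ and are disjoint, using the inclusions $\uu \subseteq \uuu \subseteq \vv$; once this bookkeeping is in place the corollary follows with no further computation. The entire argument is a short deduction from results already stated, so I expect the write-up to be only a few lines long.
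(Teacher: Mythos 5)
Your proposal is correct and follows essentially the same route as the paper: the case $q\in\uuu$ is handled by Theorem \ref{t11} (i) and (ii) (closure equals $\vv_q$, which strictly contains $\uu_q$ since $\abs{\vv_q\setminus\uu_q}=\aleph_0$), and the case $q\notin\uuu$ splits into $q\in\vv\setminus\uuu$ (Theorem \ref{t13} (i)) and $q\notin\vv$ (Theorem \ref{t14}), exactly as in the paper's two-line argument.
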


\begin{corollary}\label{c17}
Let $q \in (1,M+1]$. The following equivalences hold:
\begin{align*}
&q\in\uu\Longleftrightarrow 1\in\uu_q,\\
&q\in\uuu\Longleftrightarrow 1\in\uuuq,\\
&q\in\vv\Longleftrightarrow 1\in\vv_q.
\end{align*}
\end{corollary}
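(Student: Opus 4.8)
The plan is to establish the three equivalences in the order $\uu$, then $\vv$, then $\uuu$, treating the $\uuu$-equivalence last because it will draw on the structural dichotomy of Theorems~\ref{t11}, \ref{t13} and \ref{t14} together with the already-proved $\uu$-equivalence. First I would record the elementary fact that $1 \in J_q$ for every $q \in (1,M+1]$, since $M/(q-1)\ge 1$ exactly when $q \le M+1$; this makes $x=1$ a legitimate candidate for membership in each of $\uu_q$, $\uuuq$ and $\vv_q$.

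The $\uu$-equivalence $q \in \uu \Longleftrightarrow 1 \in \uu_q$ is then essentially tautological: by definition $q \in \uu$ means that $1$ has a unique expansion in base $q$, and since $1 \in J_q$ this is precisely the condition defining $1 \in \uu_q$.

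For the $\vv$-equivalence $q \in \vv \Longleftrightarrow 1 \in \vv_q$ I would exploit the description, recalled in the introduction, that for \emph{every} $q \in (1,M+1]$ the set $\vv_q$ consists of those $x$ having \emph{at most} one doubly infinite expansion. The key point is that $1$ always has \emph{at least} one: for $1<q<M+1$ the quasi-greedy expansion $a(1,q)$ is doubly infinite (as recalled just before Theorem~\ref{t11}), while for $q=M+1$ the number $1=M/(q-1)$ has the single expansion $M^{\infty}$, which is doubly infinite. Combining these, $1 \in \vv_q$ is equivalent to $1$ having \emph{exactly} one doubly infinite expansion, which is exactly the definition of $q \in \vv$.

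Finally, for the $\uuu$-equivalence $q \in \uuu \Longleftrightarrow 1 \in \uuuq$ I would split into cases. For the forward direction, if $q \in \uuu$ then Theorem~\ref{t11}(i) yields $\uuuq = \vv_q$; and since $q \in \uuu$ means $1$ has a unique \emph{infinite} expansion, while every doubly infinite expansion is in particular infinite, $1$ has at most one doubly infinite expansion, so $1 \in \vv_q = \uuuq$. For the converse I would argue contrapositively: if $q \notin \uuu$, then either $q \in \vv\setminus\uuu$, whence Theorem~\ref{t13}(i) makes $\uu_q$ closed so that $\uuuq = \uu_q$, or $q \notin \vv$, whence Theorem~\ref{t14} gives $\uu_q = \uuuq$ outright; in either case $\uuuq = \uu_q$, so $1 \in \uuuq$ would force $1 \in \uu_q$ and hence $q \in \uu \subseteq \uuu$ by the $\uu$-equivalence, contradicting $q \notin \uuu$. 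I expect no single hard obstacle; the real care lies in the bookkeeping — keeping the boundary base $q=M+1$ (where $\vv_q = J_q$ and the expansions degenerate to $M^{\infty}$) consistent across all three equivalences, and ordering the argument so that the $\uuu$-equivalence may legitimately invoke the closedness statements without circularity. The only substantive inputs are the identity $\uuuq = \vv_q$ for $q \in \uuu$ and the two closedness results; everything else is merely unwinding the definitions of $\uu$, $\uuu$ and $\vv$ at the single point $x=1$.
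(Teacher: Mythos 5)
Your proof is correct, and its skeleton actually coincides with the paper's: the $\uu$-equivalence is treated as tautological, the forward $\uuu$-implication goes through showing $1\in\vv_q$ and then invoking $\overline{\uu_q}=\vv_q$ from Theorem~\ref{t11}~(i), and the converse rests on closedness of $\uu_q$ for $q\notin\uuu$ (your case split via Theorem~\ref{t13}~(i) and Theorem~\ref{t14} covers exactly what the paper cites in one stroke as Lemma~\ref{l71}~(ii)). Where you genuinely diverge is in how $1\in\vv_q$ and the $\vv$-equivalence are verified: the paper stays inside its lexicographic framework, matching Proposition~\ref{p31}~(ii)--(iii) against Lemma~\ref{l47} applied to $a(1,q)=\alpha(q)$, whereas you unwind the expansion-counting descriptions recalled in the introduction ($q \in \uuu$ iff $1$ has a unique \emph{infinite} expansion; $x \in \vv_q$ iff $x$ has \emph{at most} one doubly infinite expansion), together with Proposition~\ref{p22}~(ii) and the fact that $M^{\infty}$ is doubly infinite to guarantee that $1$ always has at least one such expansion. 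Your route is shorter and nearly definition-chasing, but it carries one hidden cost you should make explicit: throughout Sections~\ref{s4}--\ref{s7} the set $\vv_q$ -- and hence the statements of Theorems~\ref{t11}, \ref{t13} and \ref{t14} as they are actually proved -- is built on the lexicographic Definition~\ref{d43}, so identifying it with the ``at most one doubly infinite expansion'' description requires Theorem~\ref{t72}~(iii). That theorem is proved just before the corollary, so no circularity arises, but a citation of it is needed to make your bookkeeping airtight; the paper's lexicographic detour through Proposition~\ref{p31} and Lemma~\ref{l47} exists precisely so that the corollary never needs to lean on that identification.
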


Let us denote by $\uu_q'$ and $\vv_q'$ the sets of quasi-greedy expansions in base $q$ of all numbers $x\in\uu_q$ and $x\in\vv_q$, respectively. 
Note that $\uu_q'$ is simply the set of unique expansions in base $q$. Elements of $\uu_q'$ are frequently referred to as \emph{univoque sequences (in base $q$)}. 

If we endow the collection of subsets of $A^{\NN}$ with the partial order $\subseteq$, then the maps $q\mapsto\uu'_q$ and $q\mapsto\vv'_q$ are non-decreasing by Lemma \ref{l44}.
However, on some intervals these maps are constant.
We say that an  interval $I  \subseteq (1,M+1]$ of bases is a \emph{stability interval} for $\uu_q$ (resp. for $\vv_q$) if $\uu_q'= \uu_r'$ (resp. $\vv_q'= \vv_r'$) for all $q,r\in I$. We call a stability interval $I$ \emph{maximal} if any interval $J \subseteq (1,M+1]$ that properly contains $I$ is not a stability interval. 
The following theorem completes the investigation of stability intervals started by Dar\'oczy and K\'atai (\cite{[DK1],[DK2]}):

\begin{theorem}\label{t18}\mbox{}

\begin{enumerate}[\upshape (i)]
\item The maximal stability intervals for $\uu_q$ are given by the singletons $\set{q}$ where $q \in \uuu$, and the intervals $(q_1,q_2]$ where
$(q_1,q_2)$ is a connected component of $(1,M+1]\setminus\vv$.
\item
The maximal stability intervals for $\vv_q$ are given by the singletons $\set{q}$ where $q \in \uu$, the interval $(1,\tilde q)$, and the intervals $[q_1,q_2)$ where
$(q_1,q_2)$ is a connected component of $(1,M+1]\setminus\vv$ with $q_1 \not=1$.
\end{enumerate}
\end{theorem}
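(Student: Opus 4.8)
The plan is to establish Theorem~\ref{t18} by characterizing precisely when two bases $q<r$ satisfy $\uu_q'=\uu_r'$ (respectively $\vv_q'=\vv_r'$), and then assembling these local comparisons into the claimed global description of maximal stability intervals. Since the maps $q\mapsto\uu_q'$ and $q\mapsto\vv_q'$ are non-decreasing by Lemma~\ref{l44}, stability of $\uu_q'$ on an interval $I$ is equivalent to the map being locally constant there, so the whole problem reduces to detecting the exact bases at which a \emph{jump} occurs. My strategy is to show that the jump set of $q\mapsto\uu_q'$ is exactly $\uuu$, and the jump set of $q\mapsto\vv_q'$ is governed by $\uu$ together with the endpoints of the complementary intervals of $\vv$.

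First I would recall the admissibility (lexicographic) characterizations of $\uu_q'$ and $\vv_q'$ in terms of the quasi-greedy expansion $(\alpha_i)=a(1,q)$: a sequence $(c_i)$ is univoque in base $q$ precisely when its tails obey the two-sided lexicographic conditions relative to $(\alpha_i)$ and its reflection, and the analogous statement holds for $\vv_q'$ with the doubly-infinite expansions. These characterizations (which follow from Proposition~\ref{p22} and the structural results of Section~3) turn the set-equality $\uu_q'=\uu_r'$ into a statement purely about whether the comparison sequence $a(1,q)$ changes as one moves from $q$ to $r$. The key monotonicity input is that $q\mapsto a(1,q)$ is strictly increasing in the lexicographic order on $(1,M+1]$, and that $q\in\uuu$ exactly when $a(1,q)$ is the unique infinite expansion of $1$, i.e.\ when $1$ lies at a point where the admissible-sequence condition becomes critical.

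For part~(i), I would argue that crossing a base $q_0\in\uuu$ strictly enlarges $\uu_q'$: by Corollary~\ref{c17}, $q_0\in\uuu$ means $1\in\uuuq[0]$, and one produces a univoque sequence present for bases just above $q_0$ but absent at or below it, using the characterization of $\uuu$ as the closure of $\uu$ and the list of expansions in Theorem~\ref{t32}(vi). Conversely, on a connected component $(q_1,q_2)$ of $(1,M+1]\setminus\vv$, one shows $a(1,q)$ is eventually determined by the same finite data (the integer $k$ and the block $\alpha_1\cdots\alpha_k$ from Proposition~\ref{p31}), so $\uu_q'$ does not change across the open interval, and the right endpoint $q_2$ can be annexed because the admissibility conditions are of the non-strict ($\le$) type there, giving the half-open interval $(q_1,q_2]$. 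Part~(ii) follows the same template for $\vv_q'$: here the relevant jumps occur at the univoque bases $q\in\uu$ (again via Corollary~\ref{c17}, since $q\in\uu\Leftrightarrow 1\in\uu_q$), the stability region extends over each component $(q_1,q_2)$ of $(1,M+1]\setminus\vv$ closed on the left to give $[q_1,q_2)$, and the leftmost interval $(1,\tilde q)$ is handled separately using that $\tilde q$ is the smallest element of $\vv$ (Theorem~\ref{t33}(ii)).

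The main obstacle I anticipate is pinning down the exact half-openness of the intervals, that is, correctly deciding at each endpoint of a component of $(1,M+1]\setminus\vv$ whether the boundary base belongs to the stability interval on its left or its right. This hinges on the delicate distinction between strict and non-strict lexicographic inequalities in the admissibility conditions, and on whether the boundary base lies in $\uu$, in $\uuu\setminus\uu$, or in $\vv\setminus\uuu$; the asymmetry between the $\uu_q$-case (right-closed, $(q_1,q_2]$) and the $\vv_q$-case (left-closed, $[q_1,q_2)$) must be traced back to the reflection symmetry built into the definition of $\vv_q$ versus $\uu_q$ and to the structure of the expansions of $1$ at these critical bases given in Theorems~\ref{t32} and~\ref{t33}. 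Getting the endpoint behaviour exactly right, rather than merely up to the open interior, is where the careful bookkeeping will be concentrated.
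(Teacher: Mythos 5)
Your plan rests on a characterization of the jump sets that is false, and the error is not cosmetic: it contradicts the very statement you are trying to prove. You claim that the jump set of $q\mapsto\uu_q'$ is exactly $\uuu$. In fact this map fails to be locally constant at \emph{every} point of $\vv$, not just at points of $\uuu$: if $q\in\vv$, then $\vv_q\setminus\uu_q\ne\varnothing$ by Theorems~\ref{t11} and~\ref{t13}, so $\uu_q'\subsetneq\vv_q'$, and Lemma~\ref{l44} then gives the chain
\begin{equation*}
\vv_s'\subseteq\uu_q'\subsetneq\vv_q'\subseteq\uu_r'
\qtq{whenever}q\in\vv\qtq{and}1<s<q<r\le M+1.
\end{equation*}
Hence $\uu_q'\subsetneq\uu_r'$ for all $r>q$, i.e.\ there is a jump at every $q\in\vv$. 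This is precisely what makes the intervals $(q_1,q_2]$ maximal, because by Theorem~\ref{t33}~(iii) the right endpoint $q_2$ lies in $\vv\setminus\uuu$ --- a point where, on your account, no jump occurs. Indeed, if your claim were true, $\uu_q'$ would be constant on each connected component $(q_0,q_0^*)$ of $(1,M+1]\setminus\uuu$; but such a component contains infinitely many points of $\vv$ and hence infinitely many of the intervals $(q_n,q_{n+1}]$, so constancy there contradicts the maximality asserted in part~(i). The same objection hits part~(ii): non-annexability of $q_2$ to $[q_1,q_2)$ and of points below $q_1$ requires jumps (from the left) at $q_1,q_2\in\vv\setminus\uu$, again not points of $\uu$, so producing univoque sequences only at bases of $\uuu$ (via Corollary~\ref{c17} and Theorem~\ref{t32}~(vi)) cannot establish maximality. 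The displayed chain, which handles both maps and both sides at once, is the key mechanism of the paper's proof and is entirely absent from your proposal.

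There is a second, independent gap in your constancy argument. You assert that on a component $(q_1,q_2)$ of $(1,M+1]\setminus\vv$ the sequence $a(1,q)$ ``is eventually determined by the same finite data''; but $q\mapsto a(1,q)$ is \emph{strictly} increasing (Proposition~\ref{p26}~(ii)), so the comparison sequence genuinely changes throughout the component. The constancy of $\uu_q'$ on $(q_1,q_2]$ is not a formality about strict versus non-strict inequalities (at $q_2$ the conditions defining $\uu_{q_2}'$ are the \emph{strict} ones); it is the content of Lemma~\ref{l81}, whose proof sandwiches $\vv_{q_1}'\subseteq\uu_q'\subseteq\uu_{q_2}'$ by monotonicity and then proves the nontrivial inclusion $\uu_{q_2}'\subseteq\vv_{q_1}'$ by a combinatorial argument exploiting the periodic forms $\alpha(q_2)=(\alpha_1\cdots\alpha_k\overline{\alpha_1\cdots\alpha_k})^{\infty}$ and $\alpha(q_1)=((\alpha_1\cdots\alpha_k)^-)^{\infty}$ from Theorem~\ref{t33}. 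Your proposal offers no substitute for this step. Finally, even with jumps and constancy in hand, the singleton cases need the paper's closing bookkeeping: maximal stability intervals partition $(1,M+1]$, the non-degenerate ones cover $(1,M+1]\setminus\uuu$ (resp.\ $(1,M+1]\setminus\uu$) by Theorem~\ref{t33}~(iii), and the leftover points form singleton intervals because $\uuu$ (hence $\uu$) has empty interior; your sketch addresses only the right-hand side of each point $q_0\in\uuu$ and so does not by itself rule out a stability interval extending to the left of $q_0$.
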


In \cite{[KL3], [DVK1], [DKL]} we have clarified the topological structure of the complements $(1, M+1] \setminus\uuu$ and $(1,M+1] \setminus\vv$.
The following theorem describes the topological structure of $J_q\setminus\uuuq$ and $J_q\setminus\vv_q$ for all $q\in(1,M+1]$.
We recall that $\uu_q=\uuuq=\vv_q$ if $q\notin\vv$, and that $\uu_q  \subsetneq\vv_q$ if $q\in\vv$. Moreover, if we write the open set $(1,M+1] \setminus \vv$ as a disjoint union of open intervals, then the set of left and right endpoint of these intervals are given by $\set{1} \cup \left(\vv \setminus \uu \right)$ and $\vv \setminus \uuu$, respectively; see Theorem~\ref{t33} (iii).

\begin{theorem}\label{t19}\mbox{}
Let $q\in(1,M+1]$ and write $(\alpha_i):=a(1,q)$.
\begin{enumerate}[\upshape (i)]
\item If $q=M+1$, then $\uuuq=\vv_q=J_q=[0,1]$.
\item If $q\in\uuu\setminus\set{M+1}$, then 
$J_q\setminus\overline{\uu_q}=J_q\setminus\vv_q$
is the union of infinitely many disjoint open sets $(x_L,x_R)$.
Furthermore,  $x_L$ and $x_R$ run over $A_q$ and $B_q$, respectively. More precisely, if 
$b(x_L,q)=b_1\cdots b_n0^{\infty}$ with $b_n>0$, then $b(x_R,q)=b_1\cdots b_n\overline{\alpha_1 \alpha_2 \cdots}$.
\item If $q\in\vv\setminus\uuu$, then
$J_q\setminus\overline{\uu_q}=J_q\setminus\uu_q$
is an open set.
Furthermore, each connected component $(x_L,x_R)$ of $J_q\setminus\uu_q$ contains infinitely many elements of $\vv_q$, forming an increasing sequence $(x_k)_{k=-\infty}^{\infty}$ satisfying 
\begin{equation}\label{12}
x_k\to x_L \qtq{as}k\to-\infty,\quad
x_k\to x_R \qtq{as}k\to \infty.
\end{equation}
Moreover, the relation between two subsequent numbers $x_{m}$ and $x_{m+1}$ in the sequence $(x_k)_{k=-\infty}^{\infty}$ is described as follows: 
\begin{equation}\label{subsequent}
\text{if } b(x_{m},q)=b_1 \cdots b_n 0^{\infty} \qtq{with} b_n>0, \qtq{then} a(x_{m+1},q)=b_1\cdots b_n\overline{\alpha_1 \alpha_2 \cdots}. 
\end{equation}
\item If $q\in(1,\tilde q]$, then
$J_q\setminus\uu_q=(0,M/(q-1))$.
\item Let $(q_1,q_2)$ be a connected component of $(1,M+1]\setminus\vv$ with 
$q_1 \not= 1$ and $q\in(q_1,q_2]$.
Then $\uu_q'=\vv_{q_1}'$ and the open sets $J_q \setminus \uu_q$ and $J_{q_1} \setminus \vv_{q_1}$ are homeomorphic.
\end{enumerate}
\end{theorem}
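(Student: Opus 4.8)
The plan is to split into the four regimes $q=M+1$, $q\in\uuu\setminus\set{M+1}$, $q\in\vv\setminus\uuu$, and $q\in(1,M+1]\setminus\vv$, to reduce the question of \emph{which} closed set governs the complement to Theorems \ref{t11}, \ref{t13} and \ref{t14}, and then to analyse the resulting open set combinatorially. The identification of the closed set is immediate: for $q\in\uuu$, Theorem \ref{t11}(i) gives $\uuuq=\vv_q$, hence $J_q\setminus\uuuq=J_q\setminus\vv_q$, which is open since $\vv_q$ is closed; this settles part (i) (for $q=M+1$ we have $\vv_q=J_q=[0,1]$ by definition, so the complement is empty and $\uuuq=\vv_q=J_q$) and the first assertion of (ii). For $q\in\vv\setminus\uuu$, Theorem \ref{t13}(i) says $\uu_q$ is closed, so $J_q\setminus\uuuq=J_q\setminus\uu_q$ is open, giving the first assertion of (iii).

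The combinatorial heart, shared by (ii) and (iii), is a single \emph{gap lemma}: if $x\in A_q$ has finite greedy expansion $b(x,q)=b_1\cdots b_n0^\infty$ with $b_n>0$, then $y:=\pi_q\bigl(b_1\cdots b_n\overline{\alpha_1\alpha_2\cdots}\bigr)$ lies in $B_q$, has greedy expansion $b_1\cdots b_n\overline{\alpha_1\alpha_2\cdots}$, and $(x,y)\cap\vv_q=\varnothing$ while $x,y\in\vv_q$. I would prove it from the lexicographic characterisations of greedy and $\vv_q$-admissible sequences together with the admissibility of $(\alpha_i)=a(1,q)$ for $q\in\vv$: any $z\in(x,y)$ has greedy expansion $b_1\cdots b_nc_{n+1}c_{n+2}\cdots$ with $0^\infty<c_{n+1}c_{n+2}\cdots<\overline{\alpha_1\alpha_2\cdots}$, so at index $n$ the lower $\vv_q$-admissibility bound is violated and $z\notin\vv_q$. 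The reflection $\ell$ is an order-reversing homeomorphism of $J_q$ preserving $\uu_q$ and $\vv_q$ with $B_q=\ell(A_q)$ by \eqref{11}, which transports statements between $A_q$ and $B_q$. Granting the lemma, part (ii) follows because $\vv_q=\uuuq$ has no point in $(x,y)$, so each $x\in A_q$ is the left endpoint of a component $(x_L,x_R)=(x,y)$ of $J_q\setminus\vv_q$ with $x_R\in B_q$ and the stated greedy expansions; conversely the left endpoints are exactly the right-isolated points of the Cantor set $\vv_q$, which the lemma identifies with $A_q$, and there are infinitely many since $A_q$ is dense in $\vv_q$ yet contained in the countable set $\vv_q\setminus\uu_q$ (Theorem \ref{t11}(ii)). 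For (iii), where $\uu_q\subsetneq\vv_q$ are both closed and $\vv_q\setminus\uu_q$ is discrete (Theorem \ref{t13}(ii)), the same lemma shows that consecutive points of $\vv_q$ inside a component $(x_L,x_R)$ of $J_q\setminus\uu_q$ obey \eqref{subsequent}; discreteness of $\vv_q\setminus\uu_q$ and closedness of $\vv_q$ force these points to accumulate only at $x_L,x_R\in\uu_q$, and iterating the lemma from the doubly infinite expansions of the endpoints produces accumulation at both, yielding the increasing bi-infinite sequence $(x_k)$ with \eqref{12}.

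For (iv), monotonicity of $q\mapsto\uu_q'$ (Lemma \ref{l44}) reduces matters to $q=\tilde q$: using the explicit form of $a(1,\tilde q)$ (Theorem \ref{t33}(ii)) one checks directly that the univoque inequalities admit only $0^\infty$ and $M^\infty$, so $\uu_q=\set{0,M/(q-1)}$ and $J_q\setminus\uu_q=(0,M/(q-1))$ for all $q\in(1,\tilde q]$. For (v), note that for $q\in(q_1,q_2)$ we have $q\notin\vv$, so $\uu_q'=\vv_q'$ by Theorem \ref{t14}, while $\vv_q'=\vv_{q_1}'$ since $[q_1,q_2)$ is a stability interval for $\vv_q$ (Theorem \ref{t18}(ii)); as $(q_1,q_2]$ is a stability interval for $\uu_q$ (Theorem \ref{t18}(i)), the identity $\uu_q'=\vv_{q_1}'$ holds throughout $(q_1,q_2]$. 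Since $\uu_q$ and $\vv_{q_1}$ are both closed, the map $x\mapsto\pi_{q_1}(a(x,q))$, sending the unique (quasi-greedy) expansion of $x\in\uu_q$ to the point of $\vv_{q_1}$ with that quasi-greedy expansion, is a strictly increasing bijection $\uu_q\to\vv_{q_1}$ (well defined and onto because $\uu_q'=\vv_{q_1}'$); it carries gaps to gaps, so extending it affinely across the complementary intervals gives an increasing bijection $J_q\to J_{q_1}$, necessarily a homeomorphism, that restricts to the desired homeomorphism $J_q\setminus\uu_q\to J_{q_1}\setminus\vv_{q_1}$.

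I expect the main obstacle to be the gap lemma and its consequences in (ii)--(iii): showing that $b_1\cdots b_n\overline{\alpha_1\alpha_2\cdots}$ is genuinely greedy, that $(x,y)$ omits $\vv_q$, and that in (iii) the points of $\vv_q$ accumulate at \emph{both} endpoints, all rest on delicate lexicographic inequalities for $(\alpha_i)$ that hold precisely because $q\in\vv$; the remaining parts are comparatively routine once Theorems \ref{t11}, \ref{t13}, \ref{t14} and \ref{t18} are in hand.
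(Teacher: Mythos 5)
Your overall architecture matches the paper's (reduce openness to Theorems \ref{t11}, \ref{t13}, \ref{t14}; a lexicographic computation identifying $b_1\cdots b_n\overline{\alpha_1\alpha_2\cdots}$ as the next admissible sequence; Examples \ref{e45}(i) with Lemma \ref{l44} for (iv); the expansion-preserving map extended affinely for (v), where your monotone-bijection shortcut to continuity is legitimate). But there is a genuine gap in (ii). Your gap lemma gives only one inclusion: every $x\in A_q$ is a left endpoint and every point of $B_q$ is a right endpoint. The theorem asserts that the endpoints run over \emph{exactly} $A_q$ and $B_q$, so you must also rule out that a point of $\uu_q$ is an endpoint, and that a point of $A_q$ (resp.\ $B_q$) is a right (resp.\ left) endpoint. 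Your justification --- that the lemma ``identifies'' right-isolated points with $A_q$, plus density of $A_q$ --- does not do this: density is a two-sided notion and cannot show that every point of $\uu_q$ is approximated from each side by points of $\vv_q$, nor that points of $A_q$ are left accumulation points of $\vv_q$. The paper needs precisely two one-sided approximation results here: Lemma \ref{l51} (truncating the unique expansion of $x\in\uu_q$ at suitable indices produces points of $A_q$ increasing to $x$; combined with the reflection Lemma \ref{l48}(ii) this excludes $\uu_q$ points as endpoints) and Lemma \ref{l52} (the sequences $(b_1\cdots b_n)^-(\alpha_1\cdots\alpha_{m_i}\overline{\alpha_1\cdots\alpha_{m_i}})^{\infty}$ produce points of $\uu_q$ increasing to any $x\in A_q$, so $A_q$ points are never right endpoints). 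Neither construction appears in your proposal, and neither follows from your gap lemma or from density.

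The same omission is fatal in (iii). To obtain \eqref{12} you must show that $(x_L,x_R)\cap\vv_q$ is nonempty and accumulates at \emph{both} endpoints; your mechanism, ``iterating the lemma from the doubly infinite expansions of the endpoints,'' cannot do either. The gap lemma takes as input a point of $A_q$ (a finite greedy expansion) and produces the next $\vv_q$-point to its \emph{right}; it cannot be started at $x_L\in\uu_q$, whose expansion is doubly infinite, and rightward iteration can never yield accumulation at $x_L$. There is no leftward analogue in this regime, since $B_q=\varnothing$ and every point of $\vv_q\setminus\uu_q$ is isolated in $\vv_q$ (Theorem \ref{t13}(ii)--(iii)), so the supremum of $\vv_q\cap(x_L,x_m)$ could a priori be $x_L$ itself, stopping the recursion. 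What is needed is again Lemma \ref{l51} plus reflection: truncations of the expansion of $x_R\in\uu_q$ give $A_q$ points increasing to $x_R$ inside the gap, and reflecting the same construction at $\ell(x_L)$ gives $\vv_q$ points decreasing to $x_L$. Note that Lemma \ref{l52} genuinely fails for $q\in\vv\setminus\uuu$ --- which is exactly why the endpoint structure in (iii) differs from (ii) --- so these one-sided lemmas carry real content that cannot be absorbed into ``delicate lexicographic inequalities'' around the gap lemma; they are a separate, indispensable ingredient that your proposal is missing.
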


\begin{remarks}\label{r110}\mbox{}

\begin{enumerate}[\upshape (i)]
\item
If $q\in\vv\setminus\uuu$ and $q\ne\tilde q$, then  $J_q\setminus\uu_q$ has infinitely many connected components.
Indeed, there exists a connected component $(q_1,q_2)$ of $(1,M+1)\setminus\vv$ such that $q_1\in\vv\setminus\uu$ and $q=q_2$. 
Then $J_q\setminus\uu_q$ is homeomorphic to $J_{q_1}\setminus\vv_{q_1}$ by Theorem \ref{t19} (v), and $J_{q_1}\setminus\vv_{q_1}$ has infinitely many connected components by Theorem \ref{t19} (ii) and (iii).
\item If $q \in \vv \setminus \uuu$ and $(x_L,x_R)$ is a connected component of $J_q \setminus \uu_q$, then all numbers in $(x_L,x_R) \cap \vv_q$ have a finite greedy expansion, according to Theorem~\ref{t13} (iii).   The greedy expansion $b(x_{m+1},q)$ of the number $x_{m+1}$ occuring in Theorem~\ref{t19} (iii) equals $\left(a_1(x_{m+1},q) \cdots a_k(x_{m+1},q)\right)^+ 0^{\infty}$ where $k \ge 1$ is the smallest index such that $a_k (x_{m+1},q) < M$ and $(a_{k+i}(x_{m+1},q))=(\alpha_i)$; see Lemma~\ref{l28} (ii).    
\item For $q \in (1,\tilde q)$, we provide in Examples \ref{e45} (ii) a short proof of the following strengthening of Theorem ~\ref{t19} (iv): \emph{each} $x \in \left(0,M/(q-1)\right)$ has $2^{\aleph_0}$ expansions. This result was first established by Baker (\cite{[B]}) using different ideas. We extract from our proof some other results of Baker (\cite{[B]}) in Examples \ref{e45} (iii) and (iv).   
\end{enumerate}
\end{remarks}

We recall that a  nonempty closed set is called \emph{perfect} if it has no isolated points, and that $x$ is a \emph{condensation point} of a set $F  \subseteq \RR$ if every neighborhood of $x$ contains uncountably many elements of $F$. 
By the Cantor--Bendixson theorem, the condensation points of an uncountable closed set $F$ of real numbers form a perfect set $P$, and $F\setminus P$ is 
(at most) countable.
In the following theorem we determine the 
isolated, accumulation and condensation points of the sets $\uu_q$, $\uuuq$ and $\vv_q$, and we list explicitly all cases where $\uu_q$, $\uuuq$ and $\vv_q$ is a Cantor set.
We recall from Theorems~\ref{t32} (iv) and \ref{t33} (iii) that if  $(q_0,q_0^*)$ is a connected component of $(1,M+1] \setminus \uuu$, then $q_0 \in \set{1} \cup \left( \uuu \setminus \uu \right)$,  $q_0^* \in \uu$, and $\vv\cap (q_0,q_0^*)$ is formed by an increasing sequence $q_1<q_2<\cdots,$ converging to $q_0^*$.
We use this notation in the following theorem. 
We also recall that if $q_0=1$, then $q_1=\tilde q=\tilde q(M)$ and $q_0^*$ is the \emph{Komornik--Loreti constant} which we will denote by $q_{KL}=q_{KL}(M)$; see \cite{[KL1],[KL2]}. The precise value of $q_{KL}(M)$ is given in Theorem~\ref{t32} (ii).

\begin{theorem}\label{t111}\mbox{}
\begin{enumerate}[\upshape (i)]
\item Let $q\in\uuu$.
\begin{enumerate}[\upshape (a)]
\item If $q=M+1$, then $\vv_q=\uuuq=[0,1]$ is a perfect set.
\item If $q\in\uuu\setminus\set{M+1}$, then $\vv_q=\uuuq$ is a Cantor set.
\end{enumerate}
\item Consider the connected component $(1,q_{KL})$ of $(1,M+1] \setminus\uuu$. 
\begin{enumerate}[\upshape (a)]
\item If $q\in(1,q_1]$, then $\uu_q$ is a two-point set.
\item If $q\in(q_n,q_{n+1}]$ for some $n\ge 1$, then $\uu_q$ is countably infinite. 
Furthermore, its accumulation and isolated points form  the sets
\begin{equation*}
\pi_q(\uu'_{q_n})\qtq{and}
\pi_q(\vv'_{q_n}\setminus\uu'_{q_n}),
\end{equation*}
respectively, and the isolated points of  $\uu_q$ form a dense subset of $\uu_q$. 
\end{enumerate}
\item Consider a connected component $(q_0,q_0^*)$ of $(1,M+1]\setminus\uuu$ with $q_0\in\uuu\setminus\uu$.
\begin{enumerate}[\upshape (a)]
\item If $q\in(q_0,q_1]$, then $\uu_q=\uuuq$ is a Cantor set, and  $\pi_q(\vv'_{q_0}\setminus\uu'_{q_0})$ is dense in $\uu_q$.
\item If $q\in(q_n,q_{n+1}]$ for some $n\ge 1$, then the condensation points, further accumulation points and isolated points of $\uu_q$ form  the sets
\begin{equation*}
\pi_q(\vv'_{q_0}),\quad 
\pi_q(\uu'_{q_n}\setminus\vv'_{q_0})\qtq{and}
\pi_q(\vv'_{q_n}\setminus\uu'_{q_n}),
\end{equation*}
respectively, and the isolated points of $\uu_q$ form a dense subset of $\uu_q$. 
\end{enumerate}
\end{enumerate}
\end{theorem}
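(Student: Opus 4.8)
\emph{Reduction to sequence space.} The plan is to push every topological assertion into the base-independent space $A^{\NN}$. First I would observe that for every $q\notin\uuu$ the projection restricts to a homeomorphism $\pi_q\colon\uu'_q\to\uu_q$. Indeed $\uu_q$ is closed by Corollary~\ref{c16}; $\pi_q$ is continuous and injective on $\uu'_q$, since if $c,d\in\uu'_q$ and $\pi_q(c)=\pi_q(d)=x$ then $x\in\uu_q$ has a unique expansion and hence $c=d$; and $\uu'_q$ is closed in the compact space $A^{\NN}$, because a limit $c$ of sequences $c^{(k)}\in\uu'_q$ satisfies $\pi_q(c)\in\uu_q$, so $c$ is the unique expansion of $\pi_q(c)$. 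Thus $\uu'_q$ is compact and $\pi_q$ is a homeomorphism. The crucial by-product is that \emph{whenever} a set $\vv'_{q_m}$ equals $\uu'_q$ for some $q\notin\uuu$ (which will always be the case by stability), it is automatically closed, so $\pi_{q_m}\colon\vv'_{q_m}\to\vv_{q_m}$ is a homeomorphism as well. Since the product topology on $A^{\NN}$ does not depend on the base, this lets me import topological facts proved about $\vv_{q_m}$ in base $q_m$ (Theorems~\ref{t11} and \ref{t13}) into $A^{\NN}$, and then export them to $\uu_q$ in the base $q$ of interest.

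\emph{Identifying the sequence sets and the nesting.} I would prove part (i) first and then use it in (ii) and (iii). Fix a component $(q_0,q_0^*)$ and write $\vv\cap(q_0,q_0^*)=\set{q_1<q_2<\cdots}$. By Theorem~\ref{t18}(i) the intervals $(q_m,q_{m+1}]$ are the stability intervals for $\uu_q$, and Theorem~\ref{t19}(v) identifies the constant value $\uu'_q=\vv'_{q_n}$ for $q\in(q_n,q_{n+1}]$, together with $\uu'_{q_n}=\vv'_{q_{n-1}}$ (for $n\ge1$ in case (iii); for $n\ge2$ in case (ii), where Theorem~\ref{t19}(iv) instead gives the finite base level $\uu'_{q_1}=\set{0^{\infty},M^{\infty}}$). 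Monotonicity of $q\mapsto\vv'_q$ (Lemma~\ref{l44}) yields the nested chain $\vv'_{q_0}\subseteq\vv'_{q_1}\subseteq\cdots\subseteq\vv'_{q_n}$ with layers $\vv'_{q_m}\setminus\vv'_{q_{m-1}}=\vv'_{q_m}\setminus\uu'_{q_m}$. Transporting Theorem~\ref{t13}(ii) (for $m\ge1$) and Theorem~\ref{t11}(ii) (for $m=0$) through the homeomorphisms $\pi_{q_m}$, each layer is countable and dense in $\vv'_{q_m}$, and for $m\ge1$ it is moreover discrete in $\vv'_{q_m}$; and in case (iii) the already-established part (i)(b) shows $\vv'_{q_0}$ is a Cantor set, while in case (ii) the core $\set{0^{\infty},M^{\infty}}$ is finite.

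\emph{The trichotomy of points.} Working inside the fixed compact set $\vv'_{q_n}$, I would write it as the core $\vv'_{q_0}$ together with the countable remainder $\vv'_{q_n}\setminus\vv'_{q_0}$. Since adjoining countably many points neither creates nor destroys condensation points, and every point of a compact perfect metric space is a condensation point of it, the condensation points of $\vv'_{q_n}$ are exactly $\vv'_{q_0}$. The top layer $\vv'_{q_n}\setminus\vv'_{q_{n-1}}$ is discrete in $\vv'_{q_n}$, hence consists of isolated points, and its density makes the isolated points dense. Finally a point $c$ of an intermediate layer $\vv'_{q_m}\setminus\vv'_{q_{m-1}}$ with $1\le m\le n-1$ lies in $\uu'_{q_{m+1}}=\vv'_{q_m}$ and is therefore approached by the dense layer $\vv'_{q_{m+1}}\setminus\uu'_{q_{m+1}}$, so $c$ is an accumulation point of $\vv'_{q_{m+1}}\subseteq\vv'_{q_n}$; since $c\notin\vv'_{q_0}$ it is not a condensation point. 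Transporting via $\pi_q$, this gives exactly the decomposition of $\uu_q=\pi_q(\vv'_{q_n})$ into condensation points $\pi_q(\vv'_{q_0})$, further accumulation points $\pi_q(\uu'_{q_n}\setminus\vv'_{q_0})$ and isolated points $\pi_q(\vv'_{q_n}\setminus\uu'_{q_n})$ of (iii)(b); in case (ii) the finite core forces $\uu_q$ to be countably infinite with no condensation points, giving (ii)(b).

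\emph{Extreme bases, Cantor-set cases, and the main obstacle.} The boundary cases are short: (i)(a) and perfectness of $[0,1]$ follow from Theorem~\ref{t19}(i); (ii)(a) is Theorem~\ref{t19}(iv), which forces $\uu_q=\set{0,M/(q-1)}$; and (iii)(a) follows since $\uu'_q=\vv'_{q_0}$ is a Cantor set, whence $\uu_q=\uuuq$ is a Cantor set with $\pi_q(\vv'_{q_0}\setminus\uu'_{q_0})$ dense by the transported Theorem~\ref{t11}(ii). For (i)(b) I would use $\vv_q=\uuuq=\overline{\uu_q}$ from Theorem~\ref{t11}(i): both $\uu_q$ and $\vv_q\setminus\uu_q$ are dense in $\vv_q$ (the latter by Theorem~\ref{t11}(ii)), so no point of $\vv_q$ is isolated, giving perfectness; nowhere-density follows from the Hausdorff-dimension bounds of Remark~\ref{r15}(iii), the degenerate two-point possibility being excluded because $\vv_q\setminus\uu_q$ is infinite. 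I expect the main obstacle to be the cross-layer argument of the third paragraph: one must track scrupulously which statements are intrinsic to $A^{\NN}$ and which are base-dependent, because the quasi-greedy selection is not continuous in general, so discreteness is available only in the base $q_m$ while the accumulating layer must be produced from the \emph{different} base $q_{m+1}$. The homeomorphism observation of the first paragraph, and in particular the automatic closedness of each $\vv'_{q_m}$, is precisely what legitimizes these transports and is the technical heart of the proof.
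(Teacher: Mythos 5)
Your proposal is correct, and it shares the paper's overall skeleton --- reduce to the stability structure ($\uu'_q=\vv'_{q_n}$ for $q\in(q_n,q_{n+1}]$ and $\uu'_{q_m}=\vv'_{q_{m-1}}$, via Lemma \ref{l81}/Theorem \ref{t19} (v)), then transport Theorems \ref{t11} and \ref{t13} from the bases $q_m$ to the base $q$ --- but your transport mechanism is genuinely different from, and more systematic than, the paper's. You prove once that $\pi_q$ restricts to a homeomorphism on $\uu'_q$ (for $q\notin\uuu$, using Corollary \ref{c16} to get closedness, hence compactness, of $\uu'_q$) and on each $\vv'_{q_m}$, after which every topological attribute (isolation, accumulation, condensation, perfectness, density) transfers wholesale between bases; in particular your identification of the condensation points --- adjoining or deleting countably many points does not affect condensation, and every point of a nonempty compact perfect metric space is a condensation point --- replaces the paper's explicit cylinder-counting argument in part (iii)(a) and its use of the closure relation $\overline{\uu'_{q_0}}=\vv'_{q_0}$, which the paper extracts from Lemmas \ref{l21}, \ref{l52}, \ref{l61} and \ref{l82}. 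The paper instead transfers properties one at a time through the semi-continuity Lemma \ref{l21} (for instance, to show that points isolated in $\vv_{q_n}$ project to points isolated in $\uu_q$), and it proves the absence of interior points combinatorially via Lemmas \ref{l29} and \ref{l49}, keeping the argument self-contained, whereas you import nowhere-density from Remark \ref{r15} (iii), which rests on dimension results quoted from the literature. Your route buys uniformity and brevity --- the trichotomy becomes pure point-set topology inside $A^{\NN}$ --- at the cost of relying on two standard but unproved topological facts.

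One small caveat: the paper's definition of a Cantor set includes having empty interior in $\RR$, and empty interior is not a homeomorphism invariant of an embedded set. So in (iii)(a) the phrase ``whence $\uu_q=\uuuq$ is a Cantor set'' needs one more sentence: $\uu_q$, being homeomorphic to a subset of $A^{\NN}$, is totally disconnected, hence contains no interval and has empty interior (alternatively, invoke Remark \ref{r15} (iii) once more, as you explicitly did for (i)(b), or the paper's Lemma \ref{l29}/Lemma \ref{l49} argument). With that line added, your argument is complete.
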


We recall (see, e.g., \cite{[LM]}) that a set $S \subseteq A^{\NN}$ is called a \emph{shiftspace} or \emph{shift} if
there exists a set $\mathcal{F}(S)  \subseteq \cup_{k=1}^{\infty}
A^{k}$ such that a sequence $(c_i) \in
A^{\NN}$ belongs to $S$ if and only if none of the blocks $c_{i+1} \cdots c_{i+n}$ $(i \ge 0, n \ge 1)$ belongs to $\mathcal{F}(S)$. A shift $S$ is called a
\emph{shift of finite type} if one can choose $\mathcal{F}(S)$ to be finite. We endow the alphabet $A$ with the discrete topology, and the set of expansions $A^{\NN}$ with the Tychonov product topology, so that the corresponding convergence in $A^{\NN}$ is the coordinate-wise convergence of sequences. 

\begin{theorem}\label{t112}
Let $q >1$ be a real number. 
The following statements are equivalent.
\begin{enumerate}[\upshape (i)]
\item $q \in (1, M+1] \setminus \uuu$.
\item $\uu_q'$ is a shift of finite type.
\item $\uu_q'$ is a shift.
\item $\uu_q'$ is a closed subset of $A^{\NN}$.
\end{enumerate}
\end{theorem}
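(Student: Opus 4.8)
The plan is to establish the cycle of implications (i)$\Rightarrow$(ii)$\Rightarrow$(iii)$\Rightarrow$(iv)$\Rightarrow$(i). Two links are immediate: a shift of finite type is a shift, so (ii)$\Rightarrow$(iii); and any shift is closed, since its complement is the union of the (open) cylinder sets on which some forbidden word occurs, so (iii)$\Rightarrow$(iv). It therefore remains to prove (iv)$\Rightarrow$(i) and the substantive implication (i)$\Rightarrow$(ii).

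For (iv)$\Rightarrow$(i) I would argue by contraposition using compactness. Since $A$ is finite, $A^{\NN}$ is compact, so if $\uu_q'$ is closed then it is compact; as $\pi_q$ is continuous, $\uu_q=\pi_q(\uu_q')$ is then compact, hence closed in $\RR$. On the other hand, if $q\in\uuu$, then Theorem \ref{t11} (i) gives $\overline{\uu_q}=\vv_q$, while Theorem \ref{t11} (ii) gives $\vv_q\setminus\uu_q\ne\varnothing$; thus $\uu_q\subsetneq\overline{\uu_q}$ and $\uu_q$ is not closed. Hence $\uu_q'$ closed forces $q\notin\uuu$.

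The heart of the matter is (i)$\Rightarrow$(ii). Here I would start from the lexicographic description of univoque sequences: writing $(\alpha_i)=a(1,q)$, a sequence $(c_i)$ belongs to $\uu_q'$ if and only if for every $k\ge1$ one has $c_{k+1}c_{k+2}\cdots<(\alpha_i)$ whenever $c_k<M$, and $\overline{c_{k+1}c_{k+2}\cdots}<(\alpha_i)$ whenever $c_k>0$. Each of these inequalities is automatically respected, up to equality, by any coordinatewise limit of univoque sequences, so the only way $\uu_q'$ can fail to be a shift of finite type is through the boundary (equality) cases; the role of the hypothesis $q\notin\uuu$ is exactly to make these finitely detectable. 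Recall that $q\notin\uuu$ means $(\alpha_i)$ violates the admissibility inequality $\overline{(\alpha_i)}\le\alpha_{n+1}\alpha_{n+2}\cdots$ for some $n$, and that $(1,M+1]\setminus\uuu$ splits into the two cases $q\notin\vv$ and $q\in\vv\setminus\uuu$.

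In the case $q\notin\vv$ the failure is \emph{strict}: there are indices $N$ and $p$ with $\alpha_{N+1}\cdots\alpha_{N+p}<\overline{\alpha_1\cdots\alpha_p}$. I would then forbid the finite set $\mathcal{F}$ of all words $c_k\cdots c_{k+D}$ (of a single fixed length $D+1$, with $D$ chosen larger than $N+p$) that strictly violate either truncated inequality above. Every univoque sequence avoids $\mathcal{F}$; conversely, if a sequence avoids $\mathcal{F}$ but fails some infinite inequality, then it must reproduce $(\alpha_i)$ (or its reflection) throughout a full window of length $D$, and the strict witness at depth $p$ past position $N$ then produces a forbidden factor, \emph{provided} one can locate within the window a position carrying a nonzero (resp. non-$M$) digit at which to activate the opposite-side constraint. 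In the case $q\in\vv\setminus\uuu$ the quasi-greedy expansion is purely periodic, $(\alpha_i)=(\alpha_1\cdots\alpha_k\overline{\alpha_1\cdots\alpha_k})^{\infty}$ for the least $k$ with $\alpha_{k+1}\alpha_{k+2}\cdots=\overline{(\alpha_i)}$; here the dangerous equality tails are shifts of the single periodic word, and I would forbid finitely many words built from one period (together with their reflections) to exclude them. I expect the main obstacle to be precisely the faithfulness of the finite forbidden set in the boundary cases: proving that a sequence reproducing $(\alpha_i)$ or $\overline{(\alpha_i)}$ over one full window is necessarily disqualified by a bounded-length factor, which requires the nonzero-digit bookkeeping above and a careful treatment of the degenerate situations (for instance $\alpha_1=M$, i.e. $q$ close to $M+1$).
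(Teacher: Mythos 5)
Your overall architecture ((ii)$\Rightarrow$(iii)$\Rightarrow$(iv) trivially, then (iv)$\Rightarrow$(i) and (i)$\Rightarrow$(ii)) matches the paper's, but two of your links differ in substance, and one of them is sound. For (iv)$\Rightarrow$(i) the paper argues via Lemma~\ref{l52}: the univoque sequences constructed there converge coordinate-wise to $(b_1\cdots b_n)^-\alpha_1\alpha_2\cdots\notin\uu_q'$, so $\uu_q'$ is not closed when $q\in\uuu$. Your compactness argument ($\uu_q'$ closed in the compact space $A^{\NN}$ forces $\uu_q=\pi_q(\uu_q')$ to be compact, hence closed, contradicting Theorem~\ref{t11} (i)--(ii)) is a correct and shorter substitute, legitimate since Theorem~\ref{t11} is proved before this point in the paper. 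For (i)$\Rightarrow$(ii) in the sub-case $q\notin\vv$, your direct construction also works, and the worry you flag resolves itself: by Proposition~\ref{p31} (iii), the witness index $N$ for $q\notin\vv$ automatically satisfies $\alpha_N>0$ as well as $\overline{\alpha_{N+1}\cdots\alpha_{N+p}}>\alpha_1\cdots\alpha_p$; so once the window forces $c_{k+1}\cdots c_{k+D}=\alpha_1\cdots\alpha_D$, the position $k+N$ carries a positive digit, and a strict inequality between length-$p$ prefixes persists under extension to length $D$, producing a forbidden factor. The paper avoids this case distinction entirely: by Lemma~\ref{l81} and Theorem~\ref{t33} (iii), $\uu_q'=\vv_{q_1}'=\uu_{q_2}'$ where $(q_1,q_2)$ is the connected component of $(1,M+1]\setminus\vv$ with $q\in(q_1,q_2]$, which reduces everything to the single periodic base $q_2\in\vv\setminus\uuu$.

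The genuine gap is your other sub-case, $q\in\vv\setminus\uuu$ --- precisely the case that carries all the weight in the paper's proof. Here forbidding \emph{strict} violations of truncated inequalities cannot work for any window length: writing $(\alpha_i)=(\alpha_1\cdots\alpha_k\overline{\alpha_1\cdots\alpha_k})^{\infty}$, the sequence $0\,\alpha_1\alpha_2\cdots$ is not univoque (its first digit is $0<M$ and its tail equals $(\alpha_i)$), yet none of its factors strictly violates any truncated inequality, because for $q\in\vv$ every tail of $(\alpha_i)$ and of $\overline{(\alpha_i)}$ satisfies the relevant constraints weakly, by Proposition~\ref{p26} (ii) and Proposition~\ref{p31} (iii). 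So the forbidden set must contain \emph{equality} words; the paper takes $\mathcal{F}=\set{ja_1\cdots a_k\in A^{k+1}\ :\ j<M,\ a_1\cdots a_k\ge\alpha_1\cdots\alpha_k}$ together with its reflections, with the \emph{weak} inequality at depth exactly $k$. Two things are then needed that your sketch does not supply. Necessity: a univoque sequence cannot contain $c_n<M$ followed by $c_{n+1}\cdots c_{n+k}=\alpha_1\cdots\alpha_k$, because then $c_{n+k}=\alpha_k>0$, and the univoque condition at $n$ forces $c_{n+k+1}c_{n+k+2}\cdots<\alpha_{k+1}\alpha_{k+2}\cdots=(\overline{\alpha_1\cdots\alpha_k}\,\alpha_1\cdots\alpha_k)^{\infty}$, while the reflected condition at $n+k$ forces the reverse strict inequality against the very same periodic sequence --- a contradiction; this clash is the core of the proof of Lemma~\ref{l81}. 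Sufficiency: once equality words are forbidden, $c_n<M$ gives $c_{n+1}\cdots c_{n+k}\le(\alpha_1\cdots\alpha_k)^-<\alpha_1\cdots\alpha_k$, and a strict inequality between length-$k$ prefixes already decides the comparison of infinite tails, so $(c_{n+i})<(\alpha_i)$; similarly for the reflection. Your proposal names the ``faithfulness of the finite forbidden set'' as the expected obstacle but identifies neither the correct forbidden set nor the clash argument, so (i)$\Rightarrow$(ii) remains unproven exactly where it is hardest.
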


Finally we consider the \emph{two-dimensional univoque set} $\bfu $, formed by the couples $(x,q)\in\RR^2$ where $q>1$ and $x$ has a unique expansion in base $q$ over the alphabet $A$:
\begin{equation*}
\bfu:=\set{(x,q)\in\RR^2\ :\ q\in(1,M+1]\text{ and }x\in\uu_q}.
\end{equation*}
Setting 
\begin{equation*}
\bfv:=\set{(x,q)\in\RR^2\ :\ q\in(1,M+1]\text{ and }x\in\vv_q},
\end{equation*}
we have the following result:

\begin{theorem}\label{t113}
The set $\bfu $ is not closed. 
Its closure $\bfuu$  equals $\bfv \cup \set{(0,1)}$. 
\end{theorem}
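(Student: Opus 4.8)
\textbf{The plan is to} prove the two assertions separately: first that $\bfu$ is not closed, and then that $\overline{\bfu}=\bfv\cup\set{(0,1)}$, establishing the latter via two inclusions.

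\emph{Non-closedness and the point $(0,1)$.} I would begin by locating a sequence of points in $\bfu$ whose limit escapes $\bfu$. The natural candidate is to approach the corner $(0,1)$. For any $q\in(1,M+1]$, the sequence $0^\infty$ is the unique expansion of $x=0$, so $(0,q)\in\bfu$ for every such $q$; letting $q\downarrow 1$ gives $(0,q)\to(0,1)$. Since $q=1$ is excluded from the range, $(0,1)\notin\bfu$, yet it is a limit of points of $\bfu$. This simultaneously shows $\bfu$ is not closed and that $(0,1)\in\overline{\bfu}$. (One must check $(0,1)$ is not already in $\bfv$ under our conventions, since $\bfv$ only contains couples with $q\in(1,M+1]$; hence adjoining $(0,1)$ is genuinely necessary.)

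\emph{The inclusion $\overline{\bfu}\subseteq\bfv\cup\set{(0,1)}$.} Here I would take a convergent sequence $(x_n,q_n)\in\bfu$ with $(x_n,q_n)\to(x,q)$ and show the limit lies in $\bfv\cup\set{(0,1)}$. If $q=1$, a squeezing argument forces $x\to 0$ (the admissible $x$-range $[0,M/(q_n-1)]$ blows up, but the unique-expansion constraint near $q=1$ should pin the accumulation to the corner), giving $(0,1)$. If $q\in(1,M+1]$, the crux is to show $x\in\vv_q$, i.e. that $x$ has at most one doubly infinite expansion. For each $n$, let $c^{(n)}=(c^{(n)}_i)$ be the unique expansion of $x_n$ in base $q_n$. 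By compactness of $A^\NN$ I would extract a subsequence along which $c^{(n)}\to c$ coordinatewise. Continuity of $\pi_{q_n}(c^{(n)})$ in both the sequence and the base (uniform in the tail since $q_n$ is bounded below away from $1$) yields $\pi_q(c)=x$, so $c$ is an expansion of $x$ in base $q$. The key step is a lexicographic/admissibility argument showing that $c$ must be the quasi-greedy expansion or otherwise forces uniqueness of the doubly infinite expansion: the limiting expansion inherits enough of the univoque inequalities to guarantee $x\in\vv_q$. I would lean on the characterizations in Proposition~\ref{p22} and the structure of $\uu_q$ versus $\vv_q$ developed in Theorems~\ref{t11}, \ref{t13}, and \ref{t14}.

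\emph{The inclusion $\bfv\cup\set{(0,1)}\subseteq\overline{\bfu}$.} The point $(0,1)$ is handled above. For $(x,q)\in\bfv$ with $q\in(1,M+1]$, I must approximate it by points of $\bfu$. The cleanest route exploits the fiberwise results: if $q\notin\uuu$ then $\uu_q=\vv_q$ by Theorems~\ref{t13}(i) and \ref{t14} (together with Corollary~\ref{c16}), so $(x,q)\in\bfu\subseteq\overline{\bfu}$ directly. If $q\in\uuu$, then $\vv_q=\overline{\uu_q}$ by Theorem~\ref{t11}(i), so $x$ is a limit within the single fiber of points $x_n\in\uu_q$ with $(x_n,q)\in\bfu$; hence $(x,q)\in\overline{\bfu}$. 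This fiberwise dichotomy, driven by whether $q\in\uuu$, is what makes this direction go through smoothly once the earlier theorems are invoked.

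\textbf{The hard part} will be the limit argument in the inclusion $\overline{\bfu}\subseteq\bfv\cup\set{(0,1)}$: controlling the joint continuity of $\pi_{q_n}(c^{(n)})$ as both the digit sequence and the base vary, and—more delicately—translating the coordinatewise limit of univoque sequences into the "at most one doubly infinite expansion" conclusion for the limit. Near $q=1$ one also needs care that mass does not leak to infinity in the series; the uniform lower bound on $q_n$ (away from the degenerate corner) is what rescues the estimate, which is precisely why the exceptional point $(0,1)$ appears and must be adjoined by hand.
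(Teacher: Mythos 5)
Your opening argument is fine: $(0,q)\in\bfu$ for all $q\in(1,M+1]$ and $q\downarrow 1$ shows both that $\bfu$ is not closed and that $(0,1)\in\bfuu$, and the $q\to 1$ limit case can indeed be settled by Examples~\ref{e45} (i). The decisive gap is in your inclusion $\bfv\subseteq\bfuu$. Your fiberwise dichotomy rests on the claim that $q\notin\uuu$ implies $\uu_q=\vv_q$, citing Theorems~\ref{t13} (i) and \ref{t14}; but Theorem~\ref{t14} gives $\uu_q=\overline{\uu_q}=\vv_q$ only for $q\notin\vv$, while Theorem~\ref{t13} (i) asserts closedness of $\uu_q$ and $\vv_q$, \emph{not} their equality. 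In fact, for the countably many bases $q\in\vv\setminus\uuu$ (e.g.\ $q=\tilde q$), Theorem~\ref{t13} (ii) says $\abs{\vv_q\setminus\uu_q}=\aleph_0$ and this difference is dense in $\vv_q$; since $\uu_q$ is closed, every $x\in\vv_q\setminus\uu_q$ satisfies $x\notin\overline{\uu_q}$, so the pair $(x,q)$ can never be reached by limits inside the fiber $\set{q}\times\uu_q$. These pairs are exactly the ones that force approximation in the base direction, which is what the paper does and which works uniformly for all $q\in(1,M+1)$: by Lemma~\ref{l44}, $a(x,q)\in\vv_q'\subseteq\uu_r'$ for every $r\in(q,M+1]$, so $\pi_r(a(x,q))\in\uu_r$, and $\pi_r(a(x,q))\to x$ as $r\downarrow q$. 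Without this cross-fiber step your proof of $\bfv\subseteq\bfuu$ fails on a dense countable subset of $\vv_q$ for every $q\in\vv\setminus\uuu$.

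The reverse inclusion also has an unfilled hole at the step you yourself flag as the key one. Passing to a coordinatewise limit $c^{(n)}\to c$ of the unique expansions only yields non-strict lexicographic inequalities against $\lim_n\alpha(q_n)$, and when $q_n\downarrow q$ this limit is in general $\beta(q)$ rather than $\alpha(q)$ (the map $q\mapsto\alpha(q)$ fails to be right-continuous at any $q$ with finite $\beta(q)$; e.g.\ for $M=1$ and $q$ the Golden ratio, $\alpha(r)\to 110^{\infty}=\beta(q)\ne(10)^{\infty}=\alpha(q)$ as $r\downarrow q$). Moreover the limit sequence $c$ need not be infinite, let alone quasi-greedy, and inequalities of the form $\overline{c_{k+1}c_{k+2}\cdots}\le\beta(q)$ for an \emph{arbitrary} expansion $c$ of $x$ do not by themselves put $x$ in $\vv_q$: Lemma~\ref{l47} converts the $\beta$-condition into membership of $\vv_q$ only when applied to the quasi-greedy expansion $a(x,q)$. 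So your plan needs a genuinely new argument to bridge from "some expansion of $x$ satisfies weak limit inequalities" to "$x\in\vv_q$". The paper sidesteps all of this: it never takes limits of unique expansions, but instead proves that $\bfv$ is relatively closed in $\bfj$, by extracting from Lemma~\ref{l47} a finitary violation ($a_n>0$ and $\overline{a_{n+1}\cdots a_{n+m}}>\beta_1\cdots\beta_m$) at any $(x,q)\in\bfj\setminus\bfv$ and propagating it to all nearby $(x',q')\in\bfj$ via the semi-continuity properties of Lemma~\ref{l21}; combined with $\bfu\subseteq\bfv$ this gives $\bfuu\cap\bfj\subseteq\bfv$ at once.
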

Hence, $\bfuu \cap \bfj = \bfv$, where the set $$\bfj:=\set{(x,q): q \in (1,M+1] \text{ and } x \in J_q}$$ consists of all couples $(x,q)$ such that $x$ has an expansion in base $q$. 

The rest of the paper is organised as follows.
In Sections \ref{s2} and \ref{s3} we recall various properties of the greedy and quasi-greedy expansions and of the sets of bases $\uu$, $\uuu$ and $\vv$.
In Section \ref{s4} we deduce some elementary properties of the sets $\uu_q$ and $\vv_q$ and we reprove the main results in \cite{[B]}. In Section \ref{s5} we prove two density results that will be important in the proof of our main theorems.
In Section \ref{s6} we prove Theorem \ref{t11} and Proposition \ref{p12}.
Section \ref{s7} is devoted to the proof of Theorems \ref{t13}, \ref{t14} and Corollaries \ref{c16}, \ref{c17}.
We also give an intrinsic characterization of $\uuuq$ in Theorem \ref{t72}.
Our final Theorems \ref{t18}, \ref{t19}, \ref{t111}, \ref{t112} and \ref{t113} are proved in Section \ref{s8}.
For the reader's convenience, a list of principal terminology and notations used in this paper is given in the final Section \ref{s9}.

\section{Greedy and quasi-greedy expansions}\label{s2}

In this section we recall from \cite{[BK], [KL2], [DVK2], [DKL]} some results that we shall use very frequently in the sequel. 

Given a base $q \in (1,M+1]$ and a real number $x\ge 0$, we define the {\it greedy sequence} $b(x,q)=(b_i(x,q))$ and the {\it quasi-greedy sequence} $a(x,q)=(a_i(x,q))$ in $A^{\NN}$ as follows.
If for some $n\in\NN$, $b_i(x,q)$ is already defined for every $i<n$ (no condition if $n=1$), then let $b_n(x,q)$ be the largest element of the digit set $A$ such that
\begin{equation*}
\sum_{i=1}^n\frac{b_i(x,q)}{q^i}\le x.
\end{equation*} 
Similarly, if $x>0$ and if for some $n\in\NN$, $a_i(x,q)$ is already defined for every $i<n$ (no condition if $n=1$), then let $a_n(x,q)$ be the largest element of the digit set $A$ such that
\begin{equation*}
\sum_{i=1}^n\frac{a_i(x,q)}{q^i}<x.
\end{equation*}
Furthermore, we set $a(0,q):=0^{\infty}$.
It follows from the definitions that
\begin{equation*}
\sum_{i=1}^{\infty}\frac{a_i(x,q)}{q^i}
\le\sum_{i=1}^{\infty}\frac{b_i(x,q)}{q^i}\le x
\end{equation*} 
for all $x\ge 0$.
Moreover, $(b_i(x,q))$ is the lexicographically largest sequence in $A^{\NN}$ satisfying
\begin{equation*}
\sum_{i=1}^{\infty}\frac{b_i(x,q)}{q^i}\le x,
\end{equation*}
and $(a_i(x,q))$ is the lexicographically largest infinite sequence in $A^{\NN}$ satisfying
\begin{equation*}
\sum_{i=1}^{\infty}\frac{a_i(x,q)}{q^i}\le x.
\end{equation*}

If $x \in J_q$, then the sequences $a(x,q)$ and $b(x,q)$ are indeed expansions of $x$ and are thus the quasi-greedy and greedy expansion of $x$ respectively, as introduced in Section ~\ref{s1}:

\begin{proposition}\label{p22}\mbox{}
Let $(x,q) \in \bfj$.
Then
\begin{enumerate}[\upshape (i)]
\item $a(x,q)$ and $b(x,q)$ are expansions of $x$, i.e.,
\begin{equation*}
\sum_{i=1}^{\infty}\frac{a_i(x,q)}{q^i}
=\sum_{i=1}^{\infty}\frac{b_i(x,q)}{q^i}
=x.
\end{equation*}
\item If $q\not= M+1$, then $a(x,q)$ is doubly infinite.
\end{enumerate}
\end{proposition}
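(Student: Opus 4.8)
The plan is to prove (i) by controlling the truncation error of both expansions geometrically, and to prove (ii) by exploiting the \emph{strict} inequality in the definition of the quasi-greedy digits together with the hypothesis $q<M+1$. Throughout I use only that $x\in J_q=[0,M/(q-1)]$ and the digit-by-digit definitions recalled above.

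For (i), I would work with the greedy remainders $r_n:=x-\sum_{i=1}^n b_i(x,q)q^{-i}$, which are nonnegative by construction, and establish the invariant $r_n\le M/(q^n(q-1))$ by induction on $n$. The base case is $r_0=x\le M/(q-1)$, which holds because $x\in J_q$. For the inductive step I split on the value of the greedy digit $b_n$. If $b_n=M$, then $r_n=r_{n-1}-Mq^{-n}$, and the induction hypothesis together with $\tfrac{M}{q^{n-1}(q-1)}-\tfrac{M}{q^n}=\tfrac{M}{q^n(q-1)}$ gives the bound directly. If $b_n<M$, maximality of $b_n$ forces $(b_n+1)q^{-n}>r_{n-1}$, whence $r_n<q^{-n}$; here the hypothesis $q\le M+1$ enters precisely through $q^{-n}\le M/(q^n(q-1))$ (equivalently $q-1\le M$). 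In either case $0\le r_n\le M/(q^n(q-1))\to0$, so $\sum_i b_i(x,q)q^{-i}=x$. The quasi-greedy remainders $\rho_n:=x-\sum_{i=1}^n a_i(x,q)q^{-i}$ are strictly positive, and the identical computation (with $(a_n+1)q^{-n}\ge\rho_{n-1}$ when $a_n<M$) yields the same bound, hence $\sum_i a_i(x,q)q^{-i}=x$.

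For (ii) I may assume $x>0$, since $a(0,q)=0^\infty$ is doubly infinite by convention. That $a(x,q)$ is infinite is immediate: if it had a last nonzero digit $a_m(x,q)$, then $\sum_i a_i(x,q)q^{-i}=\sum_{i=1}^m a_i(x,q)q^{-i}<x$ by the strict inequality defining the quasi-greedy digits, contradicting (i). To see that $a(x,q)$ is co-infinite I would argue by contradiction: suppose it is co-finite, so there is an $m\ge1$ with $a_m(x,q)<M$ and $a_i(x,q)=M$ for all $i>m$. Writing $T_m:=\sum_{i=1}^m a_i(x,q)q^{-i}$, maximality of $a_m(x,q)$ (the digit $a_m(x,q)+1\in A$ must fail the strict inequality) gives $T_m+q^{-m}\ge x$, while (i) together with $\sum_{i>m}Mq^{-i}=M/(q^m(q-1))$ gives $x-T_m=M/(q^m(q-1))$. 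Combining these yields $q^{-m}\ge M/(q^m(q-1))$, i.e.\ $q\ge M+1$, contradicting $q<M+1$.

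The geometric decay in (i) is routine; the only real subtlety is pinning down where each hypothesis is used. The inequality $q\le M+1$ is exactly the statement $q^{-n}\le M/(q^n(q-1))$ that one unit in position $n$ is dominated by the entire $M$-tail beyond position $n$ — precisely the mechanism that fails for $q>M+1$ and creates the gaps in \eqref{10}. The strengthening to $q<M+1$ is needed only in the co-infinite step of (ii), and it is genuinely necessary: when $q=M+1$ one has $M/(q^m(q-1))=q^{-m}$, so the quasi-greedy expansion of a number with finite greedy expansion ends in $M^\infty$ (for instance $a(1/2,2)=01^\infty$), which is co-finite. This is the one place where the argument must distinguish $q=M+1$ from $q<M+1$, and I expect it to be the main point to get right.
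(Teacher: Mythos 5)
Your proof is correct, but note that the paper never proves Proposition \ref{p22}: it is one of the facts recalled at the start of Section \ref{s2} from \cite{[BK], [KL2], [DVK2], [DKL]}, so there is no in-paper argument to compare against, and what you have written is a complete, self-contained version of the standard argument from that literature. In part (i), the invariant $0\le r_n\le M/\left(q^n(q-1)\right)$ with the two-case induction is exactly right: the case $b_n=M$ rests on the telescoping identity $M/\left(q^{n-1}(q-1)\right)-M/q^n=M/\left(q^n(q-1)\right)$, and the case $b_n<M$ uses maximality together with $q\le M+1$ in the form $q^{-n}\le M/\left(q^n(q-1)\right)$; the same computation with the weak inequality $(a_n+1)q^{-n}\ge\rho_{n-1}$ handles the quasi-greedy digits (the only case you pass over silently is the trivial one $x=0$, where $a(0,q)=0^{\infty}$ sums to $0$ by convention). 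In part (ii), infiniteness follows as you say from the strict defining inequality, and your co-infiniteness argument is sound: if $a_m<M$ and $a_i=M$ for all $i>m$, then maximality of $a_m$ gives $T_m+q^{-m}\ge x$ while part (i) gives $x-T_m=M/\left(q^m(q-1)\right)$, forcing $q\ge M+1$, contradicting $q<M+1$. Your accounting of where each hypothesis enters is also accurate and matches the role these facts play in the paper: $q\le M+1$ is what makes $J_q$ gap-free (compare the discussion around \eqref{10}), and the strict inequality $q<M+1$ is genuinely needed for double infiniteness, as your boundary example $a(1/2,2)=01^{\infty}$ shows — this is precisely why the paper treats $q=M+1$ separately (e.g.\ in the definition of $\vv_q$ and in Definition \ref{d43}).
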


The expansions $b(x,q)$ and $a(x,q)$ have the following  semi-continuity properties:

\begin{lemma}\label{l21}
Let $(x,q)\in \bfj$ and $(y_n,r_n) \in \bfj$, $n \in \NN$.
Then
\begin{enumerate}[\upshape (i)]
\item for each positive integer $m$
there exists a neighborhood $W \subseteq \RR^2$ of $(x,q)$ such that
\begin{equation}\label{21}
b_1(y,r)\cdots b_m(y,r)\le b_1(x,q)\cdots b_m(x,q) \qtq{for all}(y,r)\in W\cap \bfj;
\end{equation}
\item if  $y_n \downarrow x$ and $r_n \downarrow q$, then
$b(y_n,r_n)$ converges (coordinate-wise) to $b(x,q)$.
\item for each positive integer $m$
there exists a neighborhood $W  \subseteq \RR^2$ of $(x,q)$ such that
\begin{equation}\label{22}
a_1(y,r)\cdots a_m(y,r)\ge a_1(x,q)\cdots a_m(x,q) \qtq{for all}(y,r)\in W \cap \bfj;
\end{equation}
\item if  $y_n \uparrow x$ and $r_n \uparrow q$, then
$a(y_n,r_n)$ converges (coordinate-wise) to $a(x,q)$.
\end{enumerate}
\end{lemma}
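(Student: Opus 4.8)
The plan is to reduce each of the four statements to finitely many inequalities among partial sums $\sum_{i=1}^{n} v_i r^{-i}$ and then to exploit that $(y,r)\mapsto \sum_{i=1}^{n} v_i r^{-i}$ is continuous. I will use the prefix versions of the characterizations recalled before the lemma: $b_1(x,q)\cdots b_m(x,q)$ is the lexicographically largest word $v\in A^m$ with $\sum_{i=1}^{m} v_i q^{-i}\le x$ (if some $v>b_1(x,q)\cdots b_m(x,q)$ also satisfied this, then $v0^{\infty}$ would beat $b(x,q)$, contradicting its maximality), while by the very definition of the quasi-greedy digits the word $a_1(x,q)\cdots a_m(x,q)$ is obtained by choosing at each step $n\le m$ the largest digit keeping $\sum_{i=1}^{n} a_i(x,q) q^{-i}<x$. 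Parts (iii) and (iv) will be the mirror images of (i) and (ii): the greedy expansion obeys a non-strict inequality and is upper semicontinuous, while the quasi-greedy expansion obeys \emph{strict} partial-sum inequalities and is lower semicontinuous.

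For (i), set $w:=b_1(x,q)\cdots b_m(x,q)$. For every one of the finitely many words $u\in A^m$ with $u>w$, maximality of $w$ yields the strict inequality $\sum_{i=1}^{m} u_i q^{-i}>x$; by continuity there is a neighborhood $W$ of $(x,q)$ on which $\sum_{i=1}^{m} u_i r^{-i}>y$ for all such $u$. Then for $(y,r)\in W\cap\bfj$ no word exceeding $w$ can be admissible, so the greedy prefix is $\le w$, which is \eqref{21}. For (iii), set $w:=a_1(x,q)\cdots a_m(x,q)$, so that $\sum_{i=1}^{n} w_i q^{-i}<x$ strictly for each $n\le m$; on a suitable neighborhood $W$ these finitely many strict inequalities persist as $\sum_{i=1}^{n} w_i r^{-i}<y$. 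At the first index $n$ at which the quasi-greedy prefix of $(y,r)$ could differ from $w$, admissibility of $w_n$ forces $a_n(y,r)\ge w_n$, so $a_1(y,r)\cdots a_m(y,r)\ge w$, which is \eqref{22}.

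For the convergence statements I combine the semicontinuity just proved with one-sided monotonicity pushing in the favorable direction. In (ii) we have $y_n\ge x$ and $r_n\ge q$, so $w_i r_n^{-i}\le w_i q^{-i}$ gives $\sum_{i=1}^{m} w_i r_n^{-i}\le\sum_{i=1}^{m} w_i q^{-i}\le x\le y_n$; hence $w=b_1(x,q)\cdots b_m(x,q)$ is admissible for $(y_n,r_n)$ and $b_1(y_n,r_n)\cdots b_m(y_n,r_n)\ge w$ for every $n$, while eventually $(y_n,r_n)\in W$ from (i) gives the reverse inequality. Thus the prefixes agree for all large $n$, and as $m$ is arbitrary this is coordinate-wise convergence. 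Part (iv) is dual: now $y_n\le x$ and $r_n\le q$, so any $c$ with $\pi_{r_n}(c)\le y_n$ satisfies $\pi_q(c)\le\pi_{r_n}(c)\le y_n\le x$ and is therefore a competitor for $a(x,q)$; hence $a(y_n,r_n)\le a(x,q)$ for every $n$, and part (iii) supplies the eventual reverse inequality. The degenerate case $x=0$, where $a(0,q)=0^{\infty}$ and $y_n=0$, is immediate.

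The one genuine point of care is orienting the inequalities consistently across the two variables $x$ and $r$ and across the two types of expansion. The base enters only through the fact that $r\mapsto\pi_r(c)$ is \emph{decreasing}, and it is precisely this that lets an increase of the base play the same role as an increase of $x$ for the greedy expansion, and a decrease of the base the role of a decrease of $x$ for the quasi-greedy expansion; reversing any of these alignments would yield the wrong semicontinuity direction. A secondary subtlety, easy to overlook, is that the two expansions require genuinely different mechanisms --- for the greedy one shows that no larger word survives a perturbation (a stable blocking inequality), whereas for the quasi-greedy one shows that the word $w$ itself survives (its defining strict partial-sum inequalities are stable) --- so a single uniform argument for all four parts should not be expected.
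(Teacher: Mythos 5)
Your proof is correct and follows essentially the same route as the paper's: parts (i) and (iii) by persistence under perturbation of finitely many strict partial-sum inequalities, and parts (ii) and (iv) by combining that semicontinuity with the one-sided monotonicity (greedy prefixes remain admissible when $y$ and $r$ increase; quasi-greedy competitors remain competitors when they decrease). The only cosmetic difference is in (i), where you make all words $u>b_1(x,q)\cdots b_m(x,q)$ overshoot, while the paper uses the equivalent blocking inequalities $\sum_{i=1}^n b_i(x,q)q^{-i}>x-q^{-n}$ at the indices $n\le m$ with $b_n(x,q)<M$.
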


\begin{proof}
(i) By definition of $b(x,q)$, we have the \emph{strict} inequalities 
\begin{equation*}
\sum_{i=1}^n\frac{b_i(x,q)}{q^i}>x-\frac{1}{q^n}\qtq{whenever} b_n(x,q)<M.
\end{equation*}
Hence, if $(y,r)\in \bfj$ is sufficiently close to $(x,q)$, then the \emph{finitely} many inequalities   
\begin{equation*}
\sum_{i=1}^n\frac{b_i(x,q)}{r^i}>y-\frac{1}{r^n} : n\le m \text{ and }b_n(x,q)< M, 
\end{equation*}
also hold. These inequalities imply \eqref{21}. Note that the inequality \eqref{21} can be strict even if $(y,r)$ is very close to $(x,q)$ because it may be the case that 
\begin{equation*}
\sum_{i=1}^n \frac{b_i(x,q)}{ r^i} > y
\end{equation*}
for some $n \le m$. 
\medskip

(ii) If $y_n\ge x$ and $r_n\ge q$, we deduce from the definition of greedy sequences that 
\begin{equation*}
b(x,q)\le b(y_n,r_n)
\end{equation*}
for every $n$. Equivalently, we have
\begin{equation*}
b_1(x,q)\cdots b_m(x,q)\le b_1(y_n,r_n)\cdots b_m(y_n,r_n)
\end{equation*}
for all positive integers $m$ and $n$. It remains to notice that by the previous part the converse inequality also
holds for each fixed $m$ if $n$ is large enough.
\medskip

(iii) We may assume that $x \not=0$. By definition we have
\begin{equation*}
\sum_{i=1}^n\frac{a_i(x,q)}{q^i}<x \text{ for all } n=1,2,\ldots.
\end{equation*}
If $(y,r)\in \bfj$ is sufficiently close to $(x,q)$, then
\begin{equation*}
\sum_{i=1}^n\frac{a_i(x,q)}{r^i}<y,\quad n=1,\ldots, m.
\end{equation*}
These inequalities imply \eqref{22}.
\medskip

(iv) If $y_n\le x$ and $r_n\le q$, we deduce from the definition of quasi-greedy sequences that
\begin{equation*}
a(x,q)\ge a(y_n,r_n)
\end{equation*}
for every $n$. Equivalently, we have
\begin{equation*}
a_1(x,q)\cdots a_m(x,q)\ge a_1(y_n,r_n)\cdots a_m(y_n,r_n)
\end{equation*}
for all positive integers $m$ and $n$. It remains to notice that by the previous part the converse inequality also
holds for each fixed $m$ if $n$ is large enough.
\end{proof}

Let $q\in(1,M+1]$.
Observe that for any fixed  $x \in J_q$, the map $(c_i)\mapsto (M-c_i)$ is a strictly decreasing bijection between the expansions of $x$ and $\ell(x)$.
Therefore every $x \in J_q$ has a lexicographically smallest expansion, given by $(M-b_i)$, where $(b_i)=b(\ell(x),q)$. We often call this expansion the \emph{lazy} expansion of $x$. 

\begin{lemma}\label{l25}
If $q\in(1,M+1]$, then the lazy expansion of every $x \in J_q$ is infinite.
\end{lemma}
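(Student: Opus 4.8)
The plan is to argue by contradiction, using the fact established immediately before the statement that the lazy expansion of $x$ is the lexicographically \emph{smallest} expansion of $x$. Write $\lambda=(\lambda_i)$ for this lazy expansion and suppose, contrary to the claim, that $\lambda$ is finite, so that it has a last nonzero digit: there is an index $n$ with $\lambda_n>0$ and $\lambda_i=0$ for all $i>n$. I will produce an expansion of $x$ that is lexicographically smaller than $\lambda$, contradicting its minimality.

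The key arithmetic input is the hypothesis $q\le M+1$, which gives $q-1\le M$ and hence $M/(q-1)\ge 1$. Thus $1\in J_q=[0,M/(q-1)]$, so $1$ admits at least one expansion in base $q$; I fix such an expansion $(d_j)_{j\ge1}$, i.e. $\sum_{j\ge1}d_jq^{-j}=1$. Equivalently, the largest possible tail satisfies $\sum_{i>n}Mq^{-i}=M/((q-1)q^n)\ge q^{-n}$, which is precisely what makes the compensation below feasible.

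The construction is to lower the digit at position $n$ by one and absorb the resulting deficit $q^{-n}$ into an expansion of $1$ placed on the tail. Concretely, set $\mu_i:=\lambda_i$ for $i<n$, $\mu_n:=\lambda_n-1$, and $\mu_i:=d_{i-n}$ for $i>n$. Since $\lambda_n>0$, every $\mu_i$ lies in $A$, and a direct computation using $\lambda_i=0$ for $i>n$ gives
\begin{equation*}
\pi_q(\mu)=\sum_{i\le n}\frac{\lambda_i}{q^i}-\frac{1}{q^n}+\frac{1}{q^n}\sum_{j\ge1}\frac{d_j}{q^j}=x-\frac{1}{q^n}+\frac{1}{q^n}=x.
\end{equation*}
Thus $\mu$ is an expansion of $x$ with $\mu_i=\lambda_i$ for $i<n$ and $\mu_n<\lambda_n$, so $\mu<\lambda$ lexicographically --- the desired contradiction. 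Hence $\lambda$ has no last nonzero element and is infinite.

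I do not foresee a genuine obstacle: the whole argument hinges on the single inequality $q\le M+1$ guaranteeing that $1$ has an expansion, after which the digit-lowering trick is routine. The only point requiring a little care is the boundary case $q=M+1$, where $1=M/(q-1)$ is the right endpoint of $J_q$ and its only expansion is $M^{\infty}$; but the argument above is insensitive to which expansion $(d_j)$ of $1$ is chosen, so no separate treatment is needed.
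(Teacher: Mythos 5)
Your proof is correct and follows essentially the same route as the paper: the paper's one-line argument observes that any expansion with a last nonzero digit $c_n$ can be replaced by $(c_1\cdots c_n)^-\alpha_1\alpha_2\cdots$, an expansion that is lexicographically smaller, contradicting the minimality of the lazy expansion. Your only (harmless) deviation is to use an arbitrary expansion of $1$ — whose existence you justify from $q\le M+1$ — where the paper uses the quasi-greedy expansion $\alpha(q)$.
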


\begin{proof}
Write $(\alpha_i):=a(1,q)$.
If an expansion $(c_i)$ of $x \in J_q$ has a last nonzero digit $c_n$, then
$(c_1\cdots c_n)^-\alpha_1\alpha_2\cdots$ is another  expansion of $x$, smaller than $(c_i)$.
\end{proof}

Since we will often use the greedy and quasi-greedy expansions of $1$, we introduce for brevity the notation
\begin{equation*}
\beta(q)=(\beta_i(q)):= b(1,q) \qtq{and} 
\alpha(q)=(\alpha_i(q)):=a(1,q).
\end{equation*}
We will often write $(\beta_i)$ and $(\alpha_i)$ when the base $q$ is known from the context.
It will also be convenient to define 
\begin{equation*}
\beta(1)=(\beta_i(1)):=10^{\infty}\qtq{and}
\alpha(1)=(\alpha_i(1)):=0^{\infty}. 
\end{equation*}

\begin{proposition}\label{p26}\mbox{}
\begin{enumerate}[\upshape (i)]
\item The map $q \mapsto \beta(q)$ is a strictly increasing bijection from
 $[1,M+1]$ onto the set of all sequences $(\beta_i)$,
satisfying
\begin{equation}\label{23}
\beta_{n+1} \beta_{n+2} \cdots < \beta_1 \beta_2 \cdots \qtq{whenever}\beta_n<M.
\end{equation}
Furthermore, in case $q\in[1,M+1)$ the inequality \eqref{23} holds for all $n\geq 1$.
\item The map $q \mapsto \alpha(q)$ is a strictly increasing bijection from
 $[1,M+1]$ onto the set of all {\rm infinite} sequences $(\alpha_i)$,
satisfying
\begin{equation}\label{24}
\alpha_{n+1} \alpha_{n+2} \cdots \leq \alpha_1 \alpha_2 \cdots \qtq{whenever}\alpha_n<M.
\end{equation}
Furthermore, the inequality \eqref{24} holds for all $n\geq 0$.
\item $\alpha(q)$ is doubly infinite for every $q\in[1,M+1]$.
\end{enumerate}
\end{proposition}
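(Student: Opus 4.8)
These are the classical Parry-type admissibility characterisations of the greedy and quasi-greedy expansions of $1$. The plan is to establish (ii) and (iii) first---the infinite sequences and the weak inequality \eqref{24} behave most uniformly---and then to run the same scheme for the greedy sequences in (i). For each map I would prove three independent claims: strict monotonicity (hence injectivity), necessity of the admissibility inequality, and surjectivity. Two basic facts are used repeatedly: the maps $x\mapsto a(x,q)$ and $x\mapsto b(x,q)$ are non-decreasing (strictly increasing) for the lexicographic order, and any tail $\gamma_{n+1}\gamma_{n+2}\cdots$ of a (quasi-)greedy expansion $(\gamma_i)$ is again the (quasi-)greedy expansion of the number $\pi_q(\gamma_{n+1}\gamma_{n+2}\cdots)$ it represents, since at each step the algorithm depends only on the current remainder.

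\textbf{Monotonicity and injectivity.} Fix $1\le r<s\le M+1$ and put $(\alpha_i):=\alpha(r)$. From $\sum_i\alpha_i r^{-i}=1$ we get $\sum_i\alpha_i s^{-i}\le 1$; as $(\alpha_i)$ is infinite, the maximality property recalled just before Proposition~\ref{p22} gives $\alpha(r)\le\alpha(s)$, so $q\mapsto\alpha(q)$ is non-decreasing. It is injective because $\sum_i\alpha_i(q)\,q^{-i}=1$ determines $q$ uniquely (the function $t\mapsto\sum_i\gamma_it^{-i}$ is strictly decreasing once $(\gamma_i)\ne 0^{\infty}$, and $\alpha(q)=0^{\infty}$ only at $q=1$). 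Hence $q\mapsto\alpha(q)$ is strictly increasing, and the identical argument applies to $q\mapsto\beta(q)$.

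\textbf{Necessity and part (iii).} Writing $x:=\pi_q(\alpha_{n+1}\alpha_{n+2}\cdots)=q^{n}\bigl(1-\sum_{i\le n}\alpha_iq^{-i}\bigr)$, the strict defining inequalities of the quasi-greedy algorithm force $x\le 1$ whenever $\alpha_n<M$. Since this tail is itself the quasi-greedy expansion $a(x,q)$ and $x\mapsto a(x,q)$ is non-decreasing, $\alpha_{n+1}\alpha_{n+2}\cdots=a(x,q)\le a(1,q)=\alpha_1\alpha_2\cdots$, which is \eqref{24}; the extension to all $n\ge 0$ (and the strict version \eqref{23}, where for the greedy sequence the analogous remainder is $<1$) is obtained by a short lexicographic case distinction on whether $\alpha_n=M$. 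Part (iii) then follows at once: if $\alpha(q)$ ended in $M^{\infty}$ for some $q<M+1$, we could take the last index $n$ with $\alpha_n<M$, and \eqref{24} would give $M^{\infty}=\alpha_{n+1}\alpha_{n+2}\cdots\le\alpha_1\alpha_2\cdots$, forcing $\alpha(q)=M^{\infty}$ and contradicting $\pi_q(M^{\infty})=M/(q-1)>1$; for $q\in\set{1,M+1}$ one checks directly that $\alpha(q)\in\set{0^{\infty},M^{\infty}}$, and both are doubly infinite by the convention fixed in Section~\ref{s1}.

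\textbf{Surjectivity (the main obstacle) and (i).} Given an infinite $(\gamma_i)$ satisfying \eqref{24} for all $n$, set $q:=1$ if $(\gamma_i)=0^{\infty}$, and otherwise let $q$ be the unique root in $(1,\infty)$ of $\sum_i\gamma_it^{-i}=1$ (existence and uniqueness from strict monotonicity and the limits $0$ and $+\infty$ of the left-hand side); here $q\le M+1$ because $\sum_i\gamma_i(M+1)^{-i}\le\sum_iM(M+1)^{-i}=1$. It remains to prove $a(1,q)=(\gamma_i)$. As $\pi_q(\gamma)=1$ and $(\gamma_i)$ is infinite, $(\gamma_i)\le\alpha(q)$ automatically; the reverse inequality is the delicate point, because $\pi_q$ is \emph{not} order-preserving for the lexicographic order (for instance $0M^{\infty}<10^{\infty}$ yet $\pi_q(0M^{\infty})>\pi_q(10^{\infty})$ for every $q<M+1$). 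The crux is therefore the lemma that on the set of admissible sequences the projection $\pi_q$ \emph{is} strictly order-preserving, so that $\alpha(q)>(\gamma_i)$ would yield $\pi_q(\alpha(q))>\pi_q(\gamma)=1$, which is impossible. Proving this monotonicity-on-admissible-sequences statement, by an induction exploiting that admissibility bounds every competing tail, is the hardest step and is precisely the content carried in \cite{[BK],[KL2]}. Combining the three claims gives the bijection of (ii), and (i) follows by the same scheme together with the relation between the two expansions of $1$: they coincide when $b(1,q)$ is infinite, while $a(1,q)=\bigl((\beta_1\cdots\beta_n)^-\bigr)^{\infty}$ when $b(1,q)=\beta_1\cdots\beta_n0^{\infty}$ is finite with $\beta_n>0$.
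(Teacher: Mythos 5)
The paper itself contains no proof of Proposition \ref{p26}: it is one of the results recalled at the start of Section \ref{s2} from \cite{[BK],[KL2],[DVK2],[DKL]}, so there is no internal argument to compare yours against; the relevant benchmark is the standard proof in those references, and your outline follows that same route (monotonicity and injectivity, necessity of the lexicographic inequalities via remainders, surjectivity as the hard step). The portions you actually carry out are correct: monotonicity/injectivity via the ``lexicographically largest'' characterizations recalled before Proposition \ref{p22}; necessity of \eqref{23} and \eqref{24} from the remainder bounds ($x\le 1$ when $\alpha_n<M$, and $x<1$ when $\beta_n<M$) combined with the fact that tails of (quasi-)greedy expansions are again (quasi-)greedy; the short combinatorial extension to all $n$; and part (iii), including the conventions at $q\in\set{1,M+1}$.

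The one genuine gap is the one you flag yourself: surjectivity is reduced to the claim that $\pi_q$ is strictly order-preserving on admissible sequences, and that claim is then cited from \cite{[BK],[KL2]} rather than proved. Since this order-preservation is essentially Proposition \ref{p27} (ii), i.e.\ it is the real substance of the proposition, the attempt as it stands is an outline plus a citation, not a self-contained proof. It can be closed along your lines as follows. Given an infinite $(\gamma_i)$ with $\gamma_{n+1}\gamma_{n+2}\cdots\le\gamma_1\gamma_2\cdots$ for all $n\ge 0$ (your lexicographic case distinction upgrades \eqref{24}-type admissibility to this, for \emph{any} sequence, not just $\alpha(q)$) and $q$ defined by $\pi_q(\gamma)=1$, put $v_n:=\pi_q(\gamma_{n+1}\gamma_{n+2}\cdots)$. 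If $v_k>1$ for some $k$, then $\gamma_{k+1}\gamma_{k+2}\cdots<\gamma_1\gamma_2\cdots$, and if $m$ is the first index with $\gamma_{k+m}<\gamma_m$, then from $\sum_{i\le m}\gamma_iq^{-i}<1$ and $\gamma_{k+m}\le\gamma_m-1$ one computes
\[
v_{k+m}=q^m\Bigl(v_k-\sum_{i=1}^{m}\gamma_{k+i}q^{-i}\Bigr)>q^m(v_k-1)+1,
\]
so the excess $v_k-1>0$ is multiplied by at least $q>1$ at each iteration, contradicting the uniform bound $v_n\le M/(q-1)$. Hence $v_n\le 1$ for every $n$, and an induction along the quasi-greedy algorithm finishes: if $\gamma_i=\alpha_i(q)$ for $i<n$ and $\gamma_n<\alpha_n(q)$, the defining inequality of $\alpha_n(q)$ gives $v_n>1$, impossible; and $\gamma_n>\alpha_n(q)$ is excluded because $(\gamma_i)\le\alpha(q)$, so $\gamma=\alpha(q)$. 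A second, smaller omission: your one-line reduction of (i) to (ii) hides some bookkeeping, e.g.\ that strict \eqref{23} for a finite word $\beta_1\cdots\beta_m0^\infty$ forces $m$ to be the \emph{smallest} period of $((\beta_1\cdots\beta_m)^-)^\infty$, which is what guarantees, via Lemma \ref{l28} (iv), that the base produced from this periodic sequence has greedy expansion $\beta_1\cdots\beta_m0^\infty$ rather than a shorter word.
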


\begin{proposition}\label{p27}\mbox{}
Let $q\in(1,M+1]$, and write $(\alpha_i)=\alpha(q)$.
\begin{enumerate}[\upshape (i)]
\item The map  $x \mapsto b(x,q)$ is a strictly increasing bijection
from $J_q$ onto the set of all sequences $(b_i)$, satisfying
\begin{equation}\label{25}
b_{n+1}b_{n+2} \cdots < \alpha_1 \alpha_2 \cdots \qtq{whenever}b_n < M.
\end{equation}
Furthermore, the inequality \eqref{25} holds whenever $b_1\cdots b_n\ne M^n$.

\item The map $x \mapsto a(x,q)$ is a strictly increasing bijection
from $J_q$ onto the set of all {\rm infinite} sequences $(a_i)$,
satisfying
\begin{equation}\label{26}
a_{n+1}a_{n+2}\cdots \leq \alpha_1 \alpha_2 \cdots \qtq{whenever}a_n < M.
\end{equation}
Furthermore, the inequality \eqref{26} holds whenever $a_1\cdots a_n\ne M^n$.
\end{enumerate}
\end{proposition}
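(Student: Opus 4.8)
The two parts are proved by parallel arguments, so I would treat the greedy statement (i) in detail and then indicate the changes needed for (ii). Throughout, for a sequence $s=(s_i)$ I write $\sigma^n s:=s_{n+1}s_{n+2}\cdots$ for its $n$-th shift, and abbreviate $(\alpha_i)=\alpha(q)=a(1,q)$. That $x\mapsto b(x,q)$ is a \emph{strictly increasing injection} is immediate: since $\pi_q(b(x,q))=x$ by Proposition~\ref{p22}, the map is injective, and from the maximality built into the definition of $b(x,q)$ it is non-decreasing, hence strictly increasing. The content is therefore to identify its image with the set of sequences obeying \eqref{25}, which splits into a forward inclusion (every $b(x,q)$ satisfies \eqref{25}) and a surjectivity statement (every sequence satisfying \eqref{25} is a greedy expansion of its own value).

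For the forward inclusion I would first record the elementary comparison: for every $s\in A^{\NN}$, $\pi_q(s)<1$ implies $s<(\alpha_i)$. Its contrapositive is what I prove: if $s\ge(\alpha_i)$ then $\pi_q(s)\ge1$. The case $s=(\alpha_i)$ is clear, and if the first difference with $(\alpha_i)$ occurs at an index $r$ with $s_r>\alpha_r$, then $\alpha_r<M$, and the key point is that $\pi_q(\sigma^r(\alpha_i))\le1$. This last inequality is immediate from the definition of the quasi-greedy expansion of $1$: since $\alpha_r$ is the \emph{largest} digit with $\sum_{i\le r}\alpha_iq^{-i}<1$, replacing it by $\alpha_r+1\le M$ gives $\sum_{i\le r}\alpha_iq^{-i}+q^{-r}\ge1$, i.e. $\pi_q(\sigma^r(\alpha_i))\le1$. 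A one-line estimate then yields $\pi_q(s)\ge\sum_{i<r}\alpha_iq^{-i}+(\alpha_r+1)q^{-r}=1+q^{-r}(1-\pi_q(\sigma^r(\alpha_i)))\ge1$. Now if $(b_i)=b(x,q)$ and $b_n<M$, the maximality in the greedy algorithm forces $\pi_q(\sigma^n(b_i))<1$, whence $\sigma^n(b_i)<(\alpha_i)$ by the comparison; this is exactly \eqref{25}.

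The surjectivity is where the real work lies. Given $(b_i)$ satisfying \eqref{25}, set $x:=\pi_q((b_i))\in J_q$; since $b(x,q)$ is the lexicographically largest expansion of $x$ we have $(b_i)\le b(x,q)$, and if the inequality were strict, comparing the two expansions of $x$ at their first difference at index $n$ would give $b_n<M$ together with $\pi_q(\sigma^n(b_i))\ge1$, while \eqref{25} forces $\sigma^n(b_i)<(\alpha_i)$. Everything thus reduces to the following \emph{Converse Lemma}, which I expect to be the main obstacle: \emph{if $s$ satisfies \eqref{25} at every position and $s<(\alpha_i)$, then $\pi_q(s)<1$.} (The naive implication ``$s<(\alpha_i)\Rightarrow\pi_q(s)<1$'' is false without admissibility, e.g. $s=01^\infty$ in the Golden ratio base, so the admissibility hypothesis must be used.) I would prove it by a supremum bootstrap. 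Put $\rho:=\sup\{\pi_q(s):s\text{ satisfies \eqref{25} and }s<(\alpha_i)\}$. For such an $s$, with first difference with $(\alpha_i)$ at an index $j$, one has $s_j<M$, so \eqref{25} gives that $\sigma^j s$ again satisfies \eqref{25} and $\sigma^j s<(\alpha_i)$, whence $\pi_q(\sigma^j s)\le\rho$. Estimating $\pi_q(s)$ at this first difference and comparing with $\pi_q((\alpha_i))=1$ gives $\pi_q(s)-1\le q^{-j}(\rho-1-\pi_q(\sigma^j(\alpha_i)))\le q^{-j}(\rho-1)$; if $\rho>1$ this yields, using $q^{j}\ge q>1$, the contradiction $\rho\le1+(\rho-1)/q<\rho$. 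Hence $\rho\le1$, i.e. $\pi_q(\sigma^j s)\le1$ for all admissible tails; feeding this sharper bound back into the same estimate and using that $(\alpha_i)$ is infinite (so $\sum_{i\le j}\alpha_iq^{-i}<1$) upgrades the conclusion to the strict inequality $\pi_q(s)<1$.

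Part (ii) follows the same three steps with $a(x,q)$ in place of $b(x,q)$, working throughout with \emph{infinite} sequences and the non-strict inequality \eqref{26}. The forward inclusion is in fact cleaner here: the quasi-greedy overshoot gives $\pi_q(\sigma^n a(x,q))\le1$ whenever $a_n<M$, and since $\sigma^n a(x,q)$ is infinite, the fact that $(\alpha_i)$ is the lexicographically largest \emph{infinite} sequence with $\pi_q\le1$ immediately forces $\sigma^n a(x,q)\le(\alpha_i)$, i.e. \eqref{26}. For surjectivity the Converse Lemma becomes: an infinite sequence $t$ satisfying \eqref{26} with $t<(\alpha_i)$ has $\pi_q(t)<1$, proved by the identical supremum bootstrap (the only adjustment being that a tail may equal $(\alpha_i)$, which is harmless since $\pi_q((\alpha_i))=1$); the subtle point that a strict first difference is produced is guaranteed because $a(x,q)$ is infinite, so the competitor cannot have an all-zero tail. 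Finally, the two ``furthermore'' clauses extend the index condition from $b_n<M$ (resp. $a_n<M$) to $b_1\cdots b_n\ne M^n$ (resp. $a_1\cdots a_n\ne M^n$); this is a short purely lexicographic deduction: taking the largest index $j\le n$ with $b_j<M$, the base case at $j$ together with the Parry property $\sigma^p(\alpha_i)\le(\alpha_i)$ of $(\alpha_i)$ coming from \eqref{24} yields the required inequality at $n$.
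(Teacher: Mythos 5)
Your proposal is correct, but there is nothing in the paper to compare it against: Proposition~\ref{p27} is one of the results that Section~\ref{s2} explicitly \emph{recalls} from the literature (the works of Baiocchi--Komornik, Komornik--Loreti, de Vries--Komornik and de Vries--Komornik--Loreti cited there), and the paper gives no proof of it. Judged on its own merits, your reconstruction is sound and essentially follows the standard Parry-type route used in those references: the forward inclusions come from the maximality built into the (quasi-)greedy algorithm combined with the comparison lemma ``$\pi_q(s)<1$ implies $s<(\alpha_i)$'', and surjectivity reduces to the converse statement that a sequence satisfying \eqref{25} (resp.\ \eqref{26}) hereditarily and lying lexicographically below $(\alpha_i)$ has value strictly below $1$. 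The one genuinely distinctive device is your ``supremum bootstrap'': the classical proofs iterate the first-difference estimate along successive tails and pass to a limit, first obtaining $\pi_q(s)\le 1$ and then upgrading to strict inequality by one further application of the estimate, whereas your supremum $\rho$ over all admissible sequences reaches the same two-stage conclusion ($\rho\le 1$, then strictness from $\sum_{i\le j}\alpha_i q^{-i}<1$) without any limiting argument --- a modest but clean simplification. You also handle the delicate points correctly: the shift-invariance of the hereditary conditions, the possibility in part (ii) that a tail equals $(\alpha_i)$ exactly (excluded in the surjectivity step because $a(x,q)$ is infinite and so cannot end in $0^{\infty}$), and the extension of the index condition to $b_1\cdots b_n\ne M^n$ via the shift inequalities of Proposition~\ref{p26}.
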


We call a sequence $(c_i)$ {\it periodic} if $(c_i)=(c_{k+i})=c_{k+1}c_{k+2} \cdots$ for some $k \ge 1$. The smallest positive integer $k$ for which a periodic sequence satisfies $(c_{k+i})=(c_i)$ is called the {\it smallest period} of $(c_i)$. 
\begin{lemma}\label{l28}
Let $(x,q) \in \bfj$, and set
\begin{equation*}
(b_i)=b(x,q),\quad 
(a_i)=a(x,q),\quad 
(\beta_i)=\beta(q),\quad 
(\alpha_i)=\alpha(q).
\end{equation*}

\begin{enumerate}[\upshape (i)]
\item If $b(x,q)$ is infinite, then $a(x,q)=b(x,q)$.
\item If $(b_i)$ has a last nonzero element $b_m$, then
\begin{equation*}
(a_i)=(b_1\cdots  b_m)^-\alpha_1\alpha_2\cdots .
\end{equation*}

\item If $(\beta_i)$ is infinite, then $(\beta_i)=(\alpha_i)$ is periodic only for $q=M+1$.

\item If $(\beta_i)$ has a last nonzero element $\beta_m$, then 
\begin{equation*}
(\alpha_i)=((\beta_1\cdots \beta_m)^-)^\infty,
\end{equation*}
and $m$ is the smallest period of $(\alpha_i)$.
\end{enumerate}
\end{lemma}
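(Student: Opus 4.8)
The plan is to treat the four parts in order, deriving (iii) and (iv) by specializing (i) and (ii) to $x=1$. For (i), recall that by definition $a(x,q)$ is the lexicographically largest \emph{infinite} expansion of $x$, whereas $b(x,q)$ is the lexicographically largest expansion of any kind; hence $a(x,q)\le b(x,q)$ always. If $b(x,q)$ happens to be infinite, then it is itself an infinite expansion of $x$, so maximality of $a(x,q)$ among infinite expansions yields the reverse inequality $b(x,q)\le a(x,q)$, and equality follows.

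For (ii) I would set $c:=(b_1\cdots b_m)^-\alpha_1\alpha_2\cdots$ and first check that $c$ is an infinite expansion of $x$: it is infinite because $\alpha(q)$ is infinite by Proposition~\ref{p26}, and $\pi_q(c)=x$ follows from a one-line telescoping computation using $\pi_q(\alpha(q))=1$ together with $b_{m+1}=b_{m+2}=\cdots=0$. It then remains to show that $c$ is the \emph{largest} infinite expansion, so that $c=a(x,q)$. Given any infinite expansion $d$ of $x$, maximality of the greedy expansion gives $d\le b(x,q)=b_1\cdots b_m0^{\infty}$, and in fact $d<b(x,q)$ since $d$ is infinite while $b(x,q)$ is not. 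Inspecting the first coordinate $k$ at which $d$ and $b(x,q)$ differ, every case yields $d\le c$ at once except $k=m$ with $d_m=b_m-1=c_m$; in that case $d_{m+1}d_{m+2}\cdots$ is an infinite expansion of $1$, hence $d_{m+1}d_{m+2}\cdots\le\alpha(q)$ by maximality of $\alpha(q)=a(1,q)$, so again $d\le c$. (Alternatively, one checks that $c$ satisfies the admissibility condition~\eqref{26} and invokes the bijection in Proposition~\ref{p27}.)

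Parts (iii) and (iv) then follow by taking $x=1$, which is legitimate since $1\in J_q$ for $q\le M+1$. For (iii), part (i) immediately gives $\alpha(q)=\beta(q)$ when $\beta(q)$ is infinite; if moreover $\beta(q)$ were periodic with some period $p$, then $\beta_{p+1}\beta_{p+2}\cdots=\beta_1\beta_2\cdots$, which for $q<M+1$ contradicts the strict inequality $\beta_{n+1}\beta_{n+2}\cdots<\beta_1\beta_2\cdots$ valid for \emph{all} $n\ge1$ by Proposition~\ref{p26}(i); hence $q=M+1$, where indeed $\beta(q)=\alpha(q)=M^{\infty}$ is periodic. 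For (iv), applying (ii) at $x=1$ gives $\alpha(q)=w\,\alpha(q)$ with $w:=(\beta_1\cdots\beta_m)^-$ of length $m$, and iterating $\alpha=w\alpha$ yields $\alpha(q)=w^{\infty}=((\beta_1\cdots\beta_m)^-)^{\infty}$. To see that $m$ is the \emph{smallest} period, suppose the smallest period $p$ satisfied $p<m$; since $w^{\infty}$ has period $m$ we get $p\mid m$, hence $\alpha_p=\alpha_m=\beta_m-1<M$, so $\sigma:=\alpha_1\cdots\alpha_{p-1}(\alpha_p+1)0^{\infty}$ is an $A$-valued sequence with $\pi_q(\sigma)=1$ (a short computation from $\pi_q(\alpha(q))=1$). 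But $\sigma$ agrees with $\beta(q)$ in its first $p-1$ digits (as $\alpha_i=\beta_i$ for $i<m$) and has $\sigma_p=\alpha_p+1=\beta_p+1>\beta_p$, so $\sigma>\beta(q)$, contradicting the maximality of the greedy expansion; hence $p=m$.

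I expect the only genuinely delicate step to be the minimality of the period in (iv): one must produce the competitor $\sigma$, verify that it takes values in $A$ (which is exactly where $\alpha_p=\beta_m-1<M$ is used) and that $\pi_q(\sigma)=1$, and then check that it strictly exceeds the greedy expansion of $1$. Everything else reduces to routine bookkeeping with the lexicographic maximality properties that define $b(\cdot,q)$ and $a(\cdot,q)$.
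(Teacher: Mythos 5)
Your proof is correct. Note, however, that the paper itself offers no proof of Lemma~\ref{l28}: Section~\ref{s2} explicitly recalls this result (together with Propositions~\ref{p22}, \ref{p26}, \ref{p27}) from the cited references \cite{[BK], [KL2], [DVK2], [DKL]}, so there is no in-paper argument to compare against; what you have written is a self-contained derivation from the definitions, which is exactly what such a recalled statement calls for. Your reductions are sound: (i) is immediate from the two maximality properties; in (ii) the competitor analysis is complete (the case of a first discrepancy at index $k>m$ is vacuous since $b_k=0$ there, $k<m$ and $k=m$ with $d_m<b_m-1$ give $d<c$ outright, and the critical case $d_m=b_m-1$ correctly reduces to $d_{m+1}d_{m+2}\cdots$ being an \emph{infinite} expansion of $1$, hence $\le\alpha(q)$); (iii) uses precisely the strict inequality of Proposition~\ref{p26}(i) valid for all $n\ge1$ when $q<M+1$; and in (iv) the construction of $\sigma$ and the verification $\pi_q(\sigma)=1$ are right, with the digit bound $\alpha_p+1\le M$ secured by $\alpha_p=\alpha_m=\beta_m-1$. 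The only step you use without justification is the classical fact that the smallest period of a periodic sequence divides every period (needed to pass from "$p<m$ is the smallest period" to "$p\mid m$", hence $\alpha_p=\alpha_m$); this is standard (a one-line B\'ezout/gcd argument, or Fine--Wilf), but since the whole point of the exercise is a proof from first principles, it would be worth a sentence. Alternatively, parts (ii)--(iv) can be obtained by verifying the lexicographic conditions \eqref{26} and \eqref{24} and invoking the bijections of Propositions~\ref{p27}(ii) and \ref{p26}(ii), as you remark parenthetically; that route replaces the ad hoc competitor $\sigma$ by the characterization machinery, at the cost of relying more heavily on the recalled propositions.
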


\begin{lemma}\label{l29}
Let $q\in(1,M+1]$, and let $(d_i)=d_1d_2 \cdots$ be a greedy or quasi-greedy sequence.
Then for all $N \geq 1$ the truncated
sequence $d_1 \cdots d_N 0^{\infty}$ is greedy.
\end{lemma}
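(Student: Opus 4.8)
The plan is to verify the greedy characterization of Proposition~\ref{p27}~(i) directly for the truncated sequence. Write $(c_i):=d_1\cdots d_N0^{\infty}$ and $(\alpha_i):=\alpha(q)$. By Proposition~\ref{p27}~(i), a sequence is greedy precisely when
\begin{equation*}
c_{n+1}c_{n+2}\cdots<\alpha_1\alpha_2\cdots\qtq{whenever}c_n<M,
\end{equation*}
so it suffices to check this condition for $(c_i)$. I would split the argument according to whether $n\ge N$ or $n<N$.

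For $n\ge N$, the relevant tail is $c_{n+1}c_{n+2}\cdots=0^{\infty}$. Since $\alpha(q)$ is doubly infinite by Proposition~\ref{p26}~(iii), it has infinitely many nonzero digits, so $\alpha_1\alpha_2\cdots>0^{\infty}$; thus the required strict inequality holds in every case with $c_n<M$. For $n<N$ we have $c_n=d_n$, and the hypothesis $c_n<M$ reads $d_n<M$. Here I would invoke the characterizations of Proposition~\ref{p27}: whether $(d_i)$ is greedy or quasi-greedy, one has
\begin{equation*}
d_{n+1}d_{n+2}\cdots\le\alpha_1\alpha_2\cdots\qtq{whenever}d_n<M
\end{equation*}
(the greedy case of Proposition~\ref{p27}~(i) gives strict inequality, which a fortiori yields $\le$; the quasi-greedy case of Proposition~\ref{p27}~(ii) gives $\le$ directly). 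Because truncating a sequence at position $N$ and appending zeros cannot increase it lexicographically, I obtain
\begin{equation*}
c_{n+1}c_{n+2}\cdots=d_{n+1}\cdots d_N0^{\infty}\le d_{n+1}d_{n+2}\cdots\le\alpha_1\alpha_2\cdots.
\end{equation*}

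The main (and essentially only) subtlety is to upgrade this chain of $\le$ to the strict inequality that the greedy condition demands; this upgrade is automatic in the greedy case but is exactly what fails to follow formally in the quasi-greedy case. The key observation resolving it is that $c_{n+1}c_{n+2}\cdots=d_{n+1}\cdots d_N0^{\infty}$ is a \emph{finite} sequence (it has a last nonzero digit, or is identically zero), whereas $\alpha(q)$ is infinite by Proposition~\ref{p26}~(iii). Hence the two sequences cannot coincide, so the last $\le$ above is in fact strict. This establishes $c_{n+1}c_{n+2}\cdots<\alpha_1\alpha_2\cdots$ for every $n$ with $c_n<M$, and therefore $(c_i)$ is greedy by Proposition~\ref{p27}~(i), which completes the proof for all $N\ge 1$.
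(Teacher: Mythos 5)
Your proof is correct and takes essentially the same route as the paper, whose entire proof is the one-line remark that the statement follows at once from Proposition~\ref{p27}; your argument simply spells out that verification, splitting at the truncation point and noting that a truncated tail is lexicographically dominated by the original tail, with strictness obtained by comparing against $\alpha(q)$. One minor caveat: in this paper's terminology ``doubly infinite'' does not by itself imply infinitely many nonzero digits (the sequence $0^{\infty}$ is doubly infinite), so to make your strictness step airtight you should add that $\alpha(q)\neq 0^{\infty}$ because $\pi_q(\alpha(q))=1>0$, after which $\alpha(q)$, being infinite and nonzero, has infinitely many nonzero digits and therefore cannot equal $0^{\infty}$ or any sequence with a last nonzero digit.
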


\begin{proof}
The statement follows at once from
Proposition~\ref{p27}.
\end{proof}

\begin{lemma}\label{l210}
Let $q\in(1,M+1]$, and write $(\alpha_i)=\alpha(q)$.
Let $(b_i) \ne M^{\infty}$ be a greedy sequence.
Then

\begin{enumerate}[\upshape (i)]
\item There
exists a sequence $1 \leq n_1 < n_2 < \cdots$ such that for each
$i \geq 1$,
\begin{equation*}
b_{n_i} < M, \qtq{and}
b_{m+1} \cdots b_{n_i} < \alpha_1 \cdots \alpha_{n_i-m} \qtq{if}  1\le m <n_i  \qtq{and}  b_m < M.
\end{equation*}
\item For every positive integer $N$, there exists a greedy sequence 
$(c_i)>(b_i)$ such
that
\begin{equation*}
c_1 \cdots c_N= b_1\cdots b_N.
\end{equation*}
\end{enumerate}
\end{lemma}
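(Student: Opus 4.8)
The plan is to prove (i) first and then derive (ii) from it. Throughout I write $(\alpha_i)=\alpha(q)$ and recall from Proposition~\ref{p27}~(i) that greediness of $(b_i)$ means $b_{m+1}b_{m+2}\cdots<\alpha_1\alpha_2\cdots$ at every \emph{free} index $m$ (i.e. every $m$ with $b_m<M$). Since $M^{\infty}$ is the largest sequence and $\alpha\le M^{\infty}$, a free index cannot be followed by the tail $M^{\infty}$; hence $(b_i)\ne M^{\infty}$ forces infinitely many free indices. For a free index $m$ let $L(m)\ge 0$ be the length of the longest common prefix of $b_{m+1}b_{m+2}\cdots$ and $\alpha_1\alpha_2\cdots$, so that $b_{m+1}\cdots b_{m+L(m)}=\alpha_1\cdots\alpha_{L(m)}$ and $b_{m+L(m)+1}<\alpha_{L(m)+1}$, and set the \emph{reach} $R(m):=m+L(m)$. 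Using greediness, for every free $m<n$ the block $b_{m+1}\cdots b_n$ is $\le\alpha_1\cdots\alpha_{n-m}$, with equality precisely when $n\le R(m)$; consequently the condition defining the $n_i$ in (i) is equivalent to: $n$ is free and $R(m)<n$ for all free $m<n$. I call such an $n$ \emph{good}, and note that the smallest free index is vacuously good.

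The crux is the following monotonicity of the reach: if $m<m'\le R(m)$ are both free, then $R(m')\le R(m)$. Here $b_{m+1}\cdots b_{m'}=\alpha_1\cdots\alpha_{m'-m}$, so $b_{m'}=\alpha_{m'-m}<M$, and Proposition~\ref{p26}~(ii) gives $\alpha_{m'-m+1}\alpha_{m'-m+2}\cdots\le\alpha_1\alpha_2\cdots$. Inside the match region of $m$ one has $b_{m'+j}=\alpha_{m'-m+j}$ for $1\le j\le R(m)-m'$; this inequality then forces the common prefix of $b_{m'+1}b_{m'+2}\cdots$ and $\alpha$ to have length at most $R(m)-m'$. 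Indeed, if that prefix ends strictly before $R(m)-m'$ we already get $R(m')<R(m)$; and if it reaches exactly $R(m)$, then the strict drop $b_{R(m)+1}<\alpha_{L(m)+1}\le\alpha_{R(m)-m'+1}$ (the last inequality again from the tail estimate for $\alpha$) stops it there, giving $R(m')=R(m)$. I expect this comparison — carefully tracking where the two shifted copies of $\alpha$ fall out of step — to be the main obstacle.

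With monotonicity in hand, (i) follows by contradiction. Suppose there are only finitely many good indices and let $n^*$ be the largest; it is free, so $R_{\max}:=\max\{R(m):m\text{ free},\ m\le n^*\}$ is finite and $R_{\max}\ge R(n^*)\ge n^*$. Define a frontier by $F_0:=n^*$ and, given $F_j$, let $p$ be the smallest free index $>F_j$; as $p>n^*$ it is not good, so some free $m<p$ has $R(m)\ge p$, and I set $F_{j+1}:=R(m)>F_j$. An induction shows $F_j\le R_{\max}$: one has $m\le F_j\le R_{\max}=R(m^\dagger)$ for the maximizer $m^\dagger\le n^*$, and if $m>n^*$ then $m^\dagger<m\le R(m^\dagger)$, so monotonicity yields $R(m)\le R(m^\dagger)=R_{\max}$. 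This contradicts $F_j\to\infty$, proving that infinitely many good indices $n_1<n_2<\cdots$ exist.

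Finally I derive (ii). Fix $N$ and, using (i), pick a good index $n_i>N$; I claim $c:=(b_1\cdots b_{n_i})^+0^{\infty}$ works. It agrees with $(b_i)$ on the first $n_i-1\ge N$ digits and has $c_{n_i}=b_{n_i}+1>b_{n_i}$, so $(c_i)>(b_i)$ and $c_1\cdots c_N=b_1\cdots b_N$. To see that $c$ is greedy I verify Proposition~\ref{p27}~(i): for $k\ge n_i$ the tail of $c$ is $0^{\infty}<\alpha$; and for a free $k<n_i$, goodness gives $b_{k+1}\cdots b_{n_i}<\alpha_1\cdots\alpha_{n_i-k}$ as words, so incrementing the last digit keeps the tail $c_{k+1}c_{k+2}\cdots$ strictly below $\alpha$. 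The only borderline case, where the incremented block equals $\alpha_1\cdots\alpha_{n_i-k}$, is settled because $\alpha$ is infinite, whence $\alpha_1\cdots\alpha_{n_i-k}0^{\infty}<\alpha_1\alpha_2\cdots$. Thus $c$ is a greedy sequence with the required properties.
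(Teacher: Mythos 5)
Your proof is correct, but it takes a genuinely different route from the paper. The paper does not prove part (i) at all: it simply cites Theorem 2.1 of \cite{[DKL]}, and it obtains part (ii) not from part (i) but from the semi-continuity statement of Lemma \ref{l21} (ii) --- taking $y_n\downarrow x$ with $y_n>x$ (possible since $(b_i)\ne M^{\infty}$), the greedy expansions $b(y_n,q)$ converge coordinate-wise to $b(x,q)$ and exceed it lexicographically, so for large $n$ one may take $(c_i)=b(y_n,q)$. You instead give a self-contained combinatorial proof: for part (i) you introduce the reach function $R(m)=m+L(m)$, prove the monotonicity property that free indices $m<m'\le R(m)$ satisfy $R(m')\le R(m)$ (using the tail inequality $\alpha_{t+1}\alpha_{t+2}\cdots\le\alpha_1\alpha_2\cdots$ of Proposition \ref{p26} (ii) together with the agreement of the two shifted copies of $\alpha$ on the relevant prefix), and then run a bounded-frontier contradiction; for part (ii) you produce the explicit witness $(b_1\cdots b_{n_i})^+0^{\infty}$ at a good index $n_i>N$ and verify greediness directly from Proposition \ref{p27} (i), correctly handling the borderline case where the incremented block equals $\alpha_1\cdots\alpha_{n_i-k}$ by using that $\alpha$ is infinite. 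I checked the delicate steps (the monotonicity of the reach, the induction keeping the frontier below $R_{\max}$, and the greediness verification) and they are sound. What each approach buys: yours makes the lemma self-contained within this paper and yields an explicit construction in (ii) that even exhibits the new greedy sequence as a finite modification of $(b_i)$; the paper's version is far shorter, reuses machinery already established (semi-continuity in $(x,q)$, which is needed elsewhere anyway), and avoids duplicating an argument already published in \cite{[DKL]}.
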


\begin{proof}
(i) See \cite[Theorem 2.1]{[DKL]}.
\medskip

(ii) This is a consequence of Lemma \ref{l21} (ii).
\end{proof}

\section{A review of the sets of bases $\uu$, $\uuu$ and $\vv$}\label{s3}

In this section we recall several results
from \cite{[DKL]}.
They generalized for $1<q\le M+1$ a number of theorems proved in \cite{[KL3]} for $M<q\le M+1$.

The three sets $\uu$, $\uuu$ and $\vv$ (which are clearly all contained in $(1,M+1]$) have the following lexicographic characterizations (the first one was proved in \cite{[EJK]} for $M=1$):

\begin{proposition}\label{p31}
Let $q\in (1,M+1]$, and write $(\alpha_i)=\alpha(q)$, $(\beta_i)=\beta(q)$.
We have
\begin{enumerate}[\upshape (i)]
\item $q\in\uu\Longleftrightarrow \overline{(\beta_{n+i})}<(\beta_i)$ whenever $\beta_n>0$;
\item $q\in\uuu\Longleftrightarrow \overline{(\alpha_{n+i})}<(\alpha_i)$ whenever $\alpha_n>0$;
\item $q\in\vv\Longleftrightarrow \overline{(\alpha_{n+i})}\le(\alpha_i)$ whenever $\alpha_n>0$.
\end{enumerate}
Moreover, in each case the inequalities are satisfied for all $n\ge 0$.
\end{proposition}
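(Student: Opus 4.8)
The plan is to reduce each of the three equivalences to an identity between the extremal expansions of $1$ and then to read off the lexicographic condition from the characterizations in Proposition~\ref{p27}. I will use three facts recalled in Section~\ref{s2}: the greedy expansion $\beta(q)=(\beta_i)$ is the largest expansion of $1$; the smallest expansion of $1$ (its lazy expansion) is infinite by Lemma~\ref{l25} and equals $\overline{b(\ell(1),q)}$; and the quasi-greedy expansion $\alpha(q)=(\alpha_i)$ is the largest \emph{infinite} expansion of $1$ and is doubly infinite by Proposition~\ref{p26}~(iii). The basic device is the order-reversing bijection $(c_i)\mapsto\overline{(c_i)}$ between the expansions of $1$ and those of $\ell(1)$, which moreover preserves the properties of being infinite, co-infinite, and doubly infinite.

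For (i), observe first that if $q\in\uu$ then $(\beta_i)$ must be infinite: otherwise Lemma~\ref{l28}~(iv) produces a distinct (quasi-greedy) expansion of $1$. Hence $(\alpha_i)=(\beta_i)$ by Lemma~\ref{l28}~(i). Now $1$ has a unique expansion precisely when its largest and smallest expansions agree, i.e. $(\beta_i)=\overline{b(\ell(1),q)}$, equivalently $\overline{(\beta_i)}=b(\ell(1),q)$ is the greedy expansion of $\ell(1)$; by Proposition~\ref{p27}~(i) this holds iff $\overline{(\beta_{n+i})}<(\alpha_i)$ whenever $\beta_n>0$, which becomes the stated condition after substituting $(\alpha_i)=(\beta_i)$. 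Conversely the stated condition already forces $(\beta_i)$ to be infinite (at the position $n$ of a last nonzero digit it would read $M^{\infty}<(\beta_i)$, which is impossible), so $(\alpha_i)=(\beta_i)$ and the equivalences reverse. Part (ii) is the same argument one level up: $1$ has a unique \emph{infinite} expansion exactly when its largest infinite expansion $(\alpha_i)$ coincides with its smallest infinite expansion, the latter being the lazy expansion $\overline{b(\ell(1),q)}$; thus $q\in\uuu$ iff $\overline{(\alpha_i)}=b(\ell(1),q)$ is greedy, which by Proposition~\ref{p27}~(i) is exactly $\overline{(\alpha_{n+i})}<(\alpha_i)$ whenever $\alpha_n>0$. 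Here no substitution is needed because the right-hand side is already $(\alpha_i)$.

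Part (iii) is where the non-strict inequality originates, and is the main obstacle. Since $(\alpha_i)$ is doubly infinite it is the largest doubly infinite expansion of $1$, so $1$ has a unique doubly infinite expansion iff $(\alpha_i)$ is also the smallest such expansion. Reflecting, the smallest doubly infinite expansion of $1$ is the reflection of the largest doubly infinite expansion of $\ell(1)$, and this largest one is $a(\ell(1),q)$ itself, because $a(\ell(1),q)$ is doubly infinite, by Proposition~\ref{p22}~(ii) when $q<M+1$ and because $\ell(1)=0$ and $a(0,q)=0^{\infty}$ when $q=M+1$. Hence $q\in\vv$ iff $\overline{(\alpha_i)}=a(\ell(1),q)$ is the \emph{quasi-greedy} expansion of $\ell(1)$. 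The quasi-greedy characterization Proposition~\ref{p27}~(ii) carries a non-strict ``$\le$'' where the greedy one has ``$<$'', and reading it off yields $\overline{(\alpha_{n+i})}\le(\alpha_i)$ whenever $\alpha_n>0$; the equality case corresponds exactly to the borderline bases (for instance $q=\tilde q$ with $M$ even, where $\overline{(\alpha_i)}=(\alpha_i)$), which is precisely why $\vv$ admits ``$\le$'' while $\uu$ and $\uuu$ require ``$<$''.

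Finally, the assertion that the inequalities hold for all $n\ge 0$ needs only a short supplement. Once $\overline{(\beta_i)}$ (resp. $\overline{(\alpha_i)}$) is recognized as the greedy (resp. quasi-greedy) expansion of $\ell(1)$, the ``furthermore'' clauses of Proposition~\ref{p27} give the inequality for every $n\ge 1$ without the hypotheses $\beta_n>0$ (resp. $\alpha_n>0$), since $\beta_1>0$ (resp. $\alpha_1>0$). The case $n=0$ is a direct comparison of $(\beta_i)$ (resp. $(\alpha_i)$) with its reflection, handled by the strict monotonicity of the maps in Proposition~\ref{p27}; it reduces to $\ell(1)<1$ for $q\in\uu$ and $q\in\uuu$ (strict sign) and to $\ell(1)\le 1$ for $q\in\vv$ (possible equality), consistent with the dichotomy above. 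I expect essentially all the difficulty to sit in Part (iii): justifying that the smallest doubly infinite expansion of $1$ is the reflection of $a(\ell(1),q)$ — in particular the double-infiniteness of $a(\ell(1),q)$ and the separate treatment of $q=M+1$ — and noting that ``at most one'' and ``exactly one'' doubly infinite expansion coincide here because $(\alpha_i)$ is always one such expansion.
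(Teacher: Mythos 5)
A preliminary remark: the paper itself contains no proof of Proposition~\ref{p31} to compare against --- Section~\ref{s3} recalls it from \cite{[DKL]} (and part (i), for $M=1$, from \cite{[EJK]}) --- so your argument must stand on its own. Your proofs of the three equivalences do stand, and follow the natural route: membership in $\uu$, $\uuu$, $\vv$ is restated as the coincidence of the largest and smallest expansion of $1$ within the appropriate class (all expansions, infinite ones, doubly infinite ones), reflection identifies the smallest such expansion with the conjugate of an extremal expansion of $\ell(1)$, and Proposition~\ref{p27} translates ``$\overline{\beta(q)}$ is greedy'', ``$\overline{\alpha(q)}$ is greedy'', ``$\overline{\alpha(q)}$ is quasi-greedy'' into the three lexicographic conditions. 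The supporting steps are properly justified: that $q\in\uu$ forces $\beta(q)$ infinite and hence $\alpha(q)=\beta(q)$ (Lemma~\ref{l28}), that the condition in (i) conversely forces $\beta(q)$ infinite, that the lazy expansion is the smallest infinite expansion (Lemma~\ref{l25}), and that $a(\ell(1),q)$ is doubly infinite, with $q=M+1$, $\ell(1)=0$ treated separately (Proposition~\ref{p22}). One caveat: in (ii) you invoke $\uuu=\set{q>1\ :\ 1\text{ has a unique infinite expansion in base }q}$, which this paper also only quotes from \cite{[DKL]}, where it is established jointly with the very characterization you are proving; within the conventions of this paper that is admissible, but (ii) is thereby derived from a recalled fact of essentially the same depth rather than from first principles.

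The genuine gap is in the closing clause, at $n=0$. Your treatment of $n\ge 1$ (the ``furthermore'' parts of Proposition~\ref{p27} together with $\beta_1>0$, $\alpha_1>0$) is correct, but for $n=0$ you only \emph{restate} the claim: having identified $\overline{(\alpha_i)}=a(\ell(1),q)$, the strict monotonicity of $x\mapsto a(x,q)$ makes ``$\ell(1)\le 1$'' \emph{equivalent} to ``$\overline{(\alpha_i)}\le(\alpha_i)$'', so reducing the latter to the former proves nothing; and the former is not free of content, since it says exactly $q\ge (M+2)/2$, which fails for general $q\in(1,M+1]$ and, for $q\in\vv$, is the nontrivial fact $\tilde q\ge (M+2)/2$ hidden in Theorem~\ref{t33}~(ii). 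A short completion using only what you already have: the case $n=1$ of the hypothesis gives $\overline{\alpha_2}\le\alpha_1$, while \eqref{24} gives $\alpha_2\le\alpha_1$, hence $\alpha_1\ge M/2$. If $\overline{\alpha_1}<\alpha_1$, the $n=0$ inequality holds strictly. Otherwise $M=2m$ and $\alpha_1=m$; let $j$ be the least index with $\alpha_j\ne m$. If no such $j$ exists, then $(\alpha_i)=m^{\infty}$, giving equality at $n=0$ (allowed in (iii), and impossible in (i)--(ii) because $m^{\infty}$ violates the strict condition at $n=1$). If $\alpha_j<m$, then $\overline{\alpha_j}\ge m+1>\alpha_1$, so the hypothesis fails at $n=j-1$ (where $\alpha_{j-1}=m>0$), a contradiction; hence $\alpha_j>m$, and comparison of the first $j$ digits yields $\overline{(\alpha_i)}<(\alpha_i)$. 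Applied with $(\beta_i)=(\alpha_i)$, the same argument settles case (i).
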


The main properties of $\uu$ and $\uuu$ are contained in the following theorem.
In its statement we use the Thue--Morse sequence  $(\tau_i)_{i=0}^{\infty}$,  defined by the formulas $\tau_0:=0$, and
\begin{equation*}
\tau_{2^N+i}=1-\tau_{i}\quad\text{for}\quad i=0,\ldots, 2^N-1,\quad N=0,1,2,\ldots .
\end{equation*}
We call a set $X\subseteq\RR$ \emph{closed from above (below)} if the limit of every bounded decreasing (increasing) sequence in $X$ belongs to $X$.
Alternatively, $X\subseteq\RR$ is closed from above (below) if for each $x \in \RR \setminus X$, there exists a $\delta =\delta(x) >0$ such that
\begin{equation*}
\left[x,x+\delta\right) \cap X = \varnothing
\quad (\left(x-\delta,x \right] \cap X=\varnothing).
\end{equation*} 

\begin{theorem}\label{t32}\mbox{}

\begin{enumerate}[\upshape (i)]
\item The set $\uu$ is closed from above but not from below.   
\item The smallest element $q_{KL}=q_{KL}(M)$ of $\uu$ (called \emph{Komornik--Loreti constant}) is determined  by the formula
\begin{equation*}
\alpha_i(q')=
\begin{cases}
m-1+\tau_i&\text{if $M=2m-1$,\quad $m=1,2,\ldots ;$}\\
m+\tau_i-\tau_{i-1}&\text{if $M=2m$,\quad $m=1,2,\ldots .$}
\end{cases}
\end{equation*}
\item The closure $\uuu$ of $\uu$ is a Cantor set.
Moreover, $\uuu \setminus  \uu$ is a countable dense set in $\uuu $.
\item We have a disjoint union
\begin{equation*}
(1,M+1]\setminus \uuu=\cup^* (q_0,q_0^*)
\end{equation*}
where $q_0$ runs over $\set{1}\cup (\uuu\setminus\uu)$ and $q_0^*$ runs over a proper subset $\uu^*$ of $\uu$.
\item If $q\in\uuu$ and $(\alpha_i)=\alpha(q)$, then there exist arbitrarily large integers $m$ such
that for all $k$ with $0 \le k < m$,
\begin{equation*}
\overline{\alpha_{k+1} \cdots \alpha_{m}} < \alpha_1 \cdots \alpha_{m-k}.
\end{equation*}

\item If $q\in\uuu\setminus\uu$, then $(\alpha_i)=\alpha(q)$ is periodic.
Furthermore, all expansions of $1$ are given by $(\alpha_i)$ and the sequences
\begin{equation*}
(\alpha_1\cdots\alpha_k)^N(\alpha_1\cdots \alpha_k)^+0^\infty,\quad N=0,1,\ldots,
\end{equation*}
where $k$ is the smallest period of $(\alpha_i)$.
\end{enumerate}
\end{theorem}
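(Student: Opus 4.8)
The common engine for all six parts is the pair of order-isomorphisms of Proposition~\ref{p26} together with the lexicographic characterizations of Proposition~\ref{p31}: I would transport each assertion about bases $q$ into a combinatorial statement about $\alpha(q)=(\alpha_i)$ and $\beta(q)=(\beta_i)$ — recalling that $\uu$, $\uuu$, $\vv$ correspond to the strict reflection inequality on $(\beta_i)$, the strict reflection inequality on $(\alpha_i)$, and the non-strict one on $(\alpha_i)$, respectively — and use the one-sided semicontinuity of Lemma~\ref{l21} for the topology. For part~(i), closedness of $\uu$ from above is cleanest through the complement: if $q\notin\uu$, Proposition~\ref{p31}(i) gives an $n$ with $\beta_n>0$ and $\overline{\beta_{n+1}\beta_{n+2}\cdots}\ge\beta_1\beta_2\cdots$, and the right-continuity of $q\mapsto\beta(q)$ (Lemma~\ref{l21}(ii) at the fixed point $x=1$) propagates this failure, when it is realized at a finite stage, to a whole interval $(q,q+\delta)$, so that $(q,q+\delta)\cap\uu=\varnothing$. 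That $\uu$ is not closed from below then follows once part~(iii) supplies a point of $\uuu\setminus\uu$: being an accumulation point of $\uu$ that cannot be reached by a decreasing sequence in $\uu$ (that would contradict the first half), it must be the limit of an increasing sequence in $\uu$ lying outside $\uu$.

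Parts (iii) and (iv) I would treat together as the topological skeleton. Closedness of $\uuu$ is automatic since it is a closure, and nonemptiness follows from $M+1\in\uu$. Perfectness and empty interior both reduce, through the increasing bijection $q\mapsto\alpha(q)$, to two approximation facts about admissible sequences satisfying the strict reflection condition of Proposition~\ref{p31}(ii): near any such $\alpha$ one can manufacture, on the one hand, a nearby \emph{periodic} sequence satisfying the same strict condition — these supply the points of $\uuu\setminus\uu$, are countable because periodic sequences are countable, and their density gives both that $\uuu\setminus\uu$ is dense and that $\uuu$ has no isolated point; and, on the other hand, a nearby sequence \emph{violating} the condition, giving a point of the complement and hence empty interior. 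Part~(iv) then records that the open set $(1,M+1]\setminus\uuu$ is a countable disjoint union of intervals; the endpoint classification follows from the one-sided behaviour of part~(i), each right endpoint being the limit of a decreasing sequence in $\uu$ (hence in $\uu$ by closedness from above) and each left endpoint lying in $\set{1}\cup(\uuu\setminus\uu)$ because points of $\uu$ accumulate on $\uu$ from above and so cannot open a gap to their right.

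Parts (v) and (vi) are the combinatorial core. For (vi), $q\in\uuu\setminus\uu$ means the strict $\alpha$-inequality holds while the strict $\beta$-inequality fails; I would argue that the failure forces $\beta(q)$ to be finite (a univoque base cannot have a finite greedy sequence, for then $1$ would admit a second expansion), whence $\alpha(q)=\big((\beta_1\cdots\beta_m)^-\big)^\infty$ is periodic by Lemma~\ref{l28}(iv), and then enumerate the expansions of $1$ by running the greedy algorithm against this self-similar periodic sequence, reading off exactly the listed family $(\alpha_1\cdots\alpha_k)^N(\alpha_1\cdots\alpha_k)^+0^\infty$. For (v), the existence of arbitrarily large cut points $m$ — at which the finite reflection inequalities hold simultaneously for every shift $k<m$ — I would extract from the strict infinite inequalities of Proposition~\ref{p31}(ii) (valid for all $n$) by locating the scales at which each such strict inequality is already decided within the first $m$ symbols; that infinitely many such scales exist is precisely the trace of $q$ lying in $\overline\uu$, and for the periodic bases of $\uuu\setminus\uu$ it can alternatively be read off the period.

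The main obstacle is part~(ii): identifying $q_{KL}$ and proving that its quasi-greedy sequence is the (generalized) Thue--Morse sequence. The verification direction — that the displayed sequence satisfies the strict reflection condition of Proposition~\ref{p31}(ii), so its base lies in $\uu$ — is a direct computation exploiting the self-similarity $\tau_{2i}=\tau_i$, $\tau_{2i+1}=1-\tau_i$. The genuinely hard direction is minimality: one must show that \emph{no} smaller infinite admissible sequence satisfies $\overline{\alpha_{n+1}\alpha_{n+2}\cdots}<\alpha_1\alpha_2\cdots$ for all $n$ with $\alpha_n>0$. I would set this up as an extremal problem, showing that any candidate strictly below the Thue--Morse value must, at the first position where it drops, create a reflected block violating the required strict inequality; the bookkeeping that closes this off is exactly the classical Komornik--Loreti argument of \cite{[KL1],[KL2]}, which I would invoke rather than reconstruct.
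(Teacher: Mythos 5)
The paper contains no proof of this statement to compare against: Section~\ref{s3} opens by saying its results are \emph{recalled} from \cite{[DKL]}, and Theorem~\ref{t32} is quoted there without proof (the hard content goes back to \cite{[KL1],[KL2],[KL3],[DVK1]}). So your sketch has to be judged as a reconstruction of that literature. Much of its outline is sound: transporting everything through Propositions~\ref{p26} and \ref{p31} and using the semicontinuity of Lemma~\ref{l21} is indeed the standard engine; your contrapositive argument for closedness from above in (i) works, provided you also rule out the equality case $\overline{(\beta_{n+i})}=(\beta_i)$ (an equality is \emph{not} decided at a finite stage, so semicontinuity alone would not propagate it; it is excluded because it forces $\beta(q)$ to be periodic, which by Lemma~\ref{l28}~(iii) happens only for $q=M+1$, where it fails trivially); your reduction of the periodicity in (vi) to Lemma~\ref{l28}~(iv) is correct; and deferring the minimality half of (ii) to \cite{[KL1],[KL2]} is no worse than what the paper itself does.

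Two steps, however, are genuinely gapped. First, part (v): for each fixed $k$ the strict inequality $\overline{(\alpha_{k+i})}<(\alpha_i)$ is decided at some finite depth $j_k$, which yields $\overline{\alpha_{k+1}\cdots\alpha_m}<\alpha_1\cdots\alpha_{m-k}$ only for $m\ge k+j_k$. The theorem demands a \emph{single} $m$ valid for all $k<m$ simultaneously, and since the depths $j_k$ need not be bounded, ``take $m$ large'' does not close this; your phrase about ``locating the scales at which each strict inequality is decided'' restates the goal rather than proving it. Note that the conclusion of (v) is false for $q\in\vv\setminus\uuu$ (e.g.\ $M=1$, $\alpha=(10)^{\infty}$, where no $m\ge 2$ works), so any proof must exploit strictness in a uniform, structural way --- this uniformity is the actual content of (v). Second, part (iv): your classification of left endpoints rests on the claim that ``points of $\uu$ accumulate on $\uu$ from above, and so cannot open a gap to their right.'' This does not follow from (i) and (iii): perfectness of $\uuu$ only guarantees one-sided accumulation, so a point of $\uu$ could a priori be a left endpoint of a gap, accumulated by $\uuu$ from below only. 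The claim you invoke is essentially the nontrivial half of (iv) itself, so the argument is circular as written. What is needed is a construction placing elements of $\uuu$ in every interval $(q,q+\varepsilon)$ for each $q\in\uu$ with $q<M+1$ --- for instance the periodizations $(\alpha_1\cdots\alpha_m)^{\infty}$ along the cut points $m$ of (v), whose admissibility in the sense of \eqref{24} and whose strict reflection inequalities are exactly the delicate verifications. This also shows your logical ordering is backwards: the periodization you wave at in (iii) already requires (v), and the same construction (not a separate accumulation axiom) is what repairs (iv).
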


Next we list the main properties of the set $\vv$:

\begin{theorem}\label{t33}\mbox{}
\begin{enumerate}[\upshape (i)]
\item $\vv$ is compact, and $\vv\setminus \uuu $ is a countable dense subset of $\vv$.
\item  The smallest element $\tilde q = \tilde q (M)$ of $\vv$ (called  generalized Golden ratio) is given by the formulas
\begin{equation*}
\tilde q=
\begin{cases}
m+1&\text{if $M=2m$,}\\
\left(m+\sqrt{m^2+4m}\right)/2&\text{if $M=2m-1$}
\end{cases}
\end{equation*}
for $m=1,2,\ldots .$
Furthermore, 
\begin{equation*}
\begin{cases}
\beta(\tilde q)=m^+0^{\infty}\qtq{and}\alpha(\tilde q)=m^{\infty}&\text{if $M=2m$,}\\
\beta(\tilde q)=mm0^{\infty}\qtq{and}\alpha(\tilde q)=(mm^-)^{\infty}&\text{if $M=2m-1$.}
\end{cases}
\end{equation*}
\item The set $\vv\setminus \uuu $ is  discrete.
Moreover,  we have
\begin{equation*}
\vv\cap (q_0,q_0^*)=\set{q_n\ :\ n=1,2,\ldots}
\end{equation*}
for each connected component $(q_0,q_0^*)$ of $(1,M+1]\setminus \uuu$, where $(q_n)$ is a (strictly) increasing sequence converging to $q_0^*$. It follows from Theorem~\ref{t32} (iv) that we have a disjoint union 
\begin{equation*}
(1,M+1]\setminus \vv=\cup^*  (r_0,r_0^*)
\end{equation*}
where $r_0$ runs over $\set{1}\cup (\vv\setminus\uu)$ and $r_0^*$ runs over $\vv \setminus \uuu$.
 
\item Given $q_0\in \set{1}\cup(\uuu\setminus\uu)$, the greedy expansions of the numbers $q_n$ in (iii) have the form $\beta(q_n)=s_n0^{\infty}$ with a sequence of words $s_n$ defined recursively  as follows.
If $q_0\in \uuu\setminus\uu$, then $(\beta_i):=\beta(q_0)$ has a last nonzero digit $\beta_m$, and we define
\begin{equation*}
s_0:=\beta_1\cdots\beta_m,\qtq{and}
s_{n+1}:=s_n\overline{s_n^-},\quad n=0,1,\ldots.
\end{equation*}
If $q_0=1$, then $q_1=\tilde q$, and we define $s_0:=1$, 
\begin{equation*}
s_1:=
\begin{cases}
m^+&\text{if $M=2m$,}\\
mm&\text{if $M=2m-1$,}
\end{cases}
\end{equation*}
and
\begin{equation*}
s_{n+1}:=s_n\overline{s_n^-},\quad n=1,2,\ldots.
\end{equation*}
\item Let $q\in\vv$ and $(\alpha_i)=\alpha(q)$.
If for some $k\ge 1$,
\begin{equation}\label{32}
\overline{\alpha_{k+1}\cdots \alpha_{2k}} = \alpha_1 \cdots \alpha_k,
\end{equation}
then
\begin{equation*}
\alpha_k>0 \qtq{and} (\alpha_i)= (\alpha_1 \cdots \alpha_k \overline{\alpha_1 \cdots \alpha_k})^{\infty}.
\end{equation*}
In particular, $q\in\vv\setminus\uuu$.
Let, moreover, $n$ be the smallest index $k$ in \eqref{32}.
Then $2n$ is the smallest period of $(\alpha_i)$, except if $M=2m$ is even and $q=m+1$ (see (ii)).

\item If $q\in\vv\backslash\overline{\uu}$, then $(\alpha_i)=\alpha(q)$ is periodic. 

If $M=2m$ is even and $q=m+1$, then all $q$-expansions are given by $(\alpha_i)=m^{\infty}$ and the sequences
\begin{equation*}
m^Nm^+0^{\infty}\qtq{and} m^Nm^-M^{\infty},\quad  N=0,1,\ldots .
\end{equation*}

Otherwise all $q$-expansions are given by $(\alpha_i)$ and the sequences
\begin{equation*}
(\alpha_1\cdots\alpha_{2n})^N(\alpha_1\cdots \alpha_{2n})^+ 0^\infty,\quad  N=0,1,\ldots
\end{equation*}
and
\begin{equation*}
(\alpha_1\cdots\alpha_{2n})^N(\alpha_1\cdots \alpha_n)^- M^\infty,\quad N=0,1,\ldots,
\end{equation*}
where $2n$ is the smallest period of $(\alpha_i)$ (see (v)). 
\end{enumerate}
\end{theorem}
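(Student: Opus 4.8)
The plan is to prove the periodicity claim first, then establish the two inclusions of the enumeration (``each listed sequence is an expansion of $1$'' and ``every expansion of $1$ is listed'') separately, the second being the real work.

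\emph{Periodicity and the extreme expansions.} I would start by combining the lexicographic characterizations of Proposition~\ref{p31}. Since $q\in\vv\setminus\overline{\uu}=\vv\setminus\uuu$, part~(iii) gives $\overline{(\alpha_{m+i})}\le(\alpha_i)$ whenever $\alpha_m>0$, while the failure of the criterion in part~(ii) produces an index $m\ge1$ with $\alpha_m>0$ and $\overline{(\alpha_{m+i})}\ge(\alpha_i)$; hence $\overline{(\alpha_{m+i})}=(\alpha_i)$ for some such $m$, so \eqref{32} holds with $k=m$. Part~(v) then applies and yields $\alpha_n>0$ together with $(\alpha_i)=(w\overline w)^{\infty}$, where $w:=\alpha_1\cdots\alpha_n$, $n$ is the smallest admissible index, and $2n$ is the smallest period (apart from the stated even exception). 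To pin down the greedy expansion I would use that $(\alpha_i)$ is periodic and $q\ne M+1$: Lemma~\ref{l28}(iii) then forbids $\beta(q)$ from being infinite, and Lemma~\ref{l28}(iv) gives $\beta(q)=(w\overline w)^{+}0^{\infty}$, the lexicographically largest expansion of $1$.

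\emph{The listed sequences are expansions.} This is a direct computation. Since $\pi_q((w\overline w)^{\infty})=1$, the block $w\overline w$ contributes $v:=1-q^{-2n}$, and telescoping immediately gives $\pi_q((w\overline w)^N(w\overline w)^{+}0^{\infty})=1$ for every $N$, using $\pi_q((w\overline w)^+0^\infty)=\pi_q(\beta(q))=1$. For the co-finite family it suffices to check the single identity $\pi_q(w^{-}M^{\infty})=1$; substituting $\overline{\alpha_i}=M-\alpha_i$ and using $(\alpha_i)=(w\overline w)^\infty$, this reduces to $\pi_q((\alpha_i))=1$ (equivalently $\pi_q(\overline{(\alpha_i)})=\tfrac{M}{q-1}-1=\ell(1)$), after which the same telescoping gives $\pi_q((w\overline w)^N w^{-}M^{\infty})=1$ for all $N$.

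\emph{Completeness: reduction.} Let $(c_i)$ be any expansion of $1$. The key reduction is that if $c_1\cdots c_{2n}=w\overline w$, then, since this block already accounts for the value $v=1-q^{-2n}$, the tail $c_{2n+1}c_{2n+2}\cdots$ is again an expansion of $1$. Peeling off the maximal prefix of the form $(w\overline w)^N$ therefore leaves either $N=\infty$, giving $(c_i)=(\alpha_i)$, or an expansion of $1$ whose first block differs from $w\overline w$. In the latter case $(c_i)\le\beta(q)=(w\overline w)^{+}0^{\infty}$ leaves only two possibilities for the first block: either $c_1\cdots c_{2n}=(w\overline w)^{+}$, forcing the tail to be $0^{\infty}$, or $c_1\cdots c_{2n}<w\overline w$, i.e.\ a first disagreement with $(\alpha_i)$ at some $j\le 2n$ with $c_j<\alpha_j$. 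Equating the values of the two tails gives $\pi_q(c_jc_{j+1}\cdots)=\pi_q(\alpha_j\alpha_{j+1}\cdots)$, and from $c_j\le\alpha_j-1$ together with the maximality of the tail $M^{\infty}$ I would deduce the value inequality $\pi_q(\alpha_{j+1}\alpha_{j+2}\cdots)\le\ell(1)=\pi_q(\overline{(\alpha_i)})$.

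\emph{Completeness: converting value to order --- the main obstacle.} The delicate point is that the lexicographic and the value orders do \emph{not} agree on $A^{\NN}$, so the $\vv$-inequality of Proposition~\ref{p31}(iii) cannot be fed in directly. I would resolve this by noting that both tails are \emph{quasi-greedy}: $\alpha_{j+1}\alpha_{j+2}\cdots$ is a tail of the quasi-greedy sequence $(\alpha_i)$, and $\overline{(\alpha_i)}$ satisfies \eqref{26} precisely because $q\in\vv$, so each is the quasi-greedy expansion of its own value. Since $x\mapsto a(x,q)$ is \emph{strictly} increasing by Proposition~\ref{p27}(ii), the value inequality above is \emph{equivalent} to the lexicographic one $\alpha_{j+1}\alpha_{j+2}\cdots\le\overline{(\alpha_i)}$; on the other hand $\alpha_j>0$ (because $\alpha_j>c_j\ge0$) lets me invoke Proposition~\ref{p31}(iii) for the reverse inequality $\alpha_{j+1}\alpha_{j+2}\cdots\ge\overline{(\alpha_i)}$. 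Hence $\alpha_{j+1}\alpha_{j+2}\cdots=\overline{(\alpha_i)}$, which both forces equality throughout (so $c_j=\alpha_j-1$ and $c_{j+1}c_{j+2}\cdots=M^{\infty}$) and, since $(\alpha_i)=(w\overline w)^{\infty}$ has smallest period $2n$, forces $j\equiv n\pmod{2n}$, i.e.\ $j=n$. This identifies $(c_i)=w^{-}M^{\infty}$, and re-prepending $(w\overline w)^N$ completes the list. The even exception $M=2m$, $q=m+1$ runs through the same argument with $w=m$ (here $\overline w=w$ and the period is $1$), the only change being that the first disagreement necessarily occurs at $j=1$.
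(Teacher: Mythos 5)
Your proposal addresses only a fraction of the statement, and the mismatch with the paper is structural: the paper does not prove Theorem~\ref{t33} at all --- Section~\ref{s3} opens by saying these results are \emph{recalled} from \cite{[DKL]}, so there is no internal proof to compare against. What you actually prove is part (vi) (periodicity of $\alpha(q)$ for $q\in\vv\setminus\uuu$ and the enumeration of all expansions of $1$), and even there you invoke part (v) as a black box (``Part (v) then applies\ldots''), which is itself one of the claims in the statement. Parts (i)--(iv) are untouched: the compactness of $\vv$ and the density of $\vv\setminus\uuu$ in $\vv$, the explicit formulas for $\tilde q$, $\beta(\tilde q)$ and $\alpha(\tilde q)$, the discreteness of $\vv\setminus\uuu$ together with the description of $\vv\cap(q_0,q_0^*)$ as an increasing sequence $q_n\uparrow q_0^*$, and the recursion $s_{n+1}=s_n\overline{s_n^-}$ producing $\beta(q_n)$. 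These concern how $\alpha(q)$ varies with $q$ and the global structure of $(1,M+1]\setminus\uuu$, and nothing in your fixed-base symbolic analysis can be adapted to yield them. So, taken as a proof of Theorem~\ref{t33}, the proposal has a substantial gap.

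That said, the fragment you do treat is essentially correct, including at the point where such arguments usually fail. Deriving $\overline{(\alpha_{m+i})}=(\alpha_i)$ by combining the negation of Proposition~\ref{p31}(ii) with Proposition~\ref{p31}(iii), pinning down $\beta(q)$ via Lemma~\ref{l28}(iii)--(iv), the telescoping computation showing each listed sequence has value $1$, and the peeling reduction are all sound. The delicate step --- converting the value inequality $\pi_q(\alpha_{j+1}\alpha_{j+2}\cdots)\le\ell(1)$ into a lexicographic one --- is handled correctly: tails of $(\alpha_i)$ satisfy \eqref{26} by the ``furthermore'' clause of Proposition~\ref{p26}(ii), and $\overline{(\alpha_i)}$ satisfies \eqref{26} precisely because $q\in\vv$, so both sequences are quasi-greedy expansions of their own values and the strictly increasing bijection of Proposition~\ref{p27}(ii) makes the value and lexicographic orders agree on them; Proposition~\ref{p31}(iii) then forces equality, which yields $c_j=\alpha_j-1$, the tail $M^{\infty}$, and $j\equiv n\pmod{2n}$, hence $j=n$. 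If you supplied proofs of parts (i)--(v), or explicitly restricted the claim to part (vi) with (v) as a hypothesis, this portion would stand as a correct, self-contained argument.
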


\section{Preliminary results on $\uu_q$ and $\vv_q$}\label{s4}

Given a base $q\in (1,M+1]$, there is a very useful lexicographic characterization of $\uu'_q$, essentially obtained in \cite{[P], [EJK],[KL2]}:

\begin{lemma}\label{l42}
Let $q\in (1,M+1]$ and $(\alpha_i)=\alpha(q)$.
A sequence $(c_i)\in A^{\NN}$ belongs to $\uu'_q$ if and only if the following two conditions are satisfied:
\begin{align}
&c_{n+1}c_{n+2} \cdots < \alpha_1 \alpha_2 \cdots \qtq{whenever}c_n < M,\label{41}\\
&\overline{c_{n+1}c_{n+2} \cdots }< \alpha_1 \alpha_2 \cdots \qtq{whenever}c_n>0.\label{42}
\end{align}
Furthermore, these inequalities hold whenever $c_1\cdots c_n\ne M^n$ and $c_1\cdots c_n\ne 0^n$, respectively.
\end{lemma}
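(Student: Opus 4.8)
The plan is to prove Lemma~\ref{l42} by characterizing unique expansions lexicographically, using the fact established in Section~\ref{s2} that every $x \in J_q$ has a lexicographically largest expansion and a lexicographically smallest expansion, so that $(c_i)$ is the \emph{unique} expansion of $x=\pi_q(c)$ precisely when it is simultaneously the greedy and the lazy expansion of $x$. First I would recall from Proposition~\ref{p27}~(i) that a sequence is the greedy expansion of some $x \in J_q$ if and only if it satisfies
\begin{equation*}
c_{n+1}c_{n+2}\cdots < \alpha_1\alpha_2\cdots \qtq{whenever} c_n < M,
\end{equation*}
which is exactly condition \eqref{41}. This handles the ``greedy half'' directly: \eqref{41} says $(c_i)$ equals its own greedy expansion, i.e.\ $(c_i) = b(\pi_q(c),q)$.

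Next I would exploit the reflection symmetry. By the discussion preceding Lemma~\ref{l25}, the map $(c_i) \mapsto (\overline{c_i})$ is a strictly decreasing bijection between the expansions of $x$ and those of $\ell(x)$, and the lazy expansion of $x$ is the reflection of the greedy expansion of $\ell(x)$. Hence $(c_i)$ is the lazy expansion of $x$ if and only if $(\overline{c_i})$ is the greedy expansion of $\ell(x)$, which by Proposition~\ref{p27}~(i) applied to $(\overline{c_i})$ means
\begin{equation*}
\overline{c_{n+1}}\,\overline{c_{n+2}}\cdots < \alpha_1\alpha_2\cdots \qtq{whenever} \overline{c_n} < M,
\end{equation*}
that is, $\overline{c_{n+1}c_{n+2}\cdots} < \alpha_1\alpha_2\cdots$ whenever $c_n > 0$, which is precisely condition \eqref{42}. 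Combining the two halves: $(c_i) \in \uu_q'$ iff $(c_i)$ is both greedy and lazy, iff both \eqref{41} and \eqref{42} hold.

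For the final sentence of the lemma I would transcribe the corresponding ``furthermore'' clauses of Proposition~\ref{p27}: the greedy characterization \eqref{25} holds whenever $c_1\cdots c_n \ne M^n$, giving \eqref{41} under that hypothesis; applying this to the reflected sequence $(\overline{c_i})$, the condition $\overline{c_1}\cdots\overline{c_n} \ne M^n$ is the condition $c_1\cdots c_n \ne 0^n$, giving \eqref{42} whenever $c_1\cdots c_n \ne 0^n$. The main point to get right is the bookkeeping around the reflection, namely that reflecting the lazy/greedy characterization of $\ell(x)$ turns ``$c_n < M$'' into ``$c_n > 0$'' and turns the exceptional block $M^n$ into $0^n$; this is routine but must be stated carefully. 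The genuinely substantive input — that greedy expansions are characterized by \eqref{25} — is already supplied by Proposition~\ref{p27}, so I expect the only real obstacle to be verifying cleanly that ``unique expansion'' is equivalent to ``greedy and lazy coincide,'' which follows immediately from the existence of lexicographically extremal expansions.
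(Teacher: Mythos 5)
Your proposal is correct and follows essentially the same route as the paper's own proof: identify membership in $\uu_q'$ with being simultaneously the greedy and the lazy expansion, note that $(c_i)$ is lazy for $x$ precisely when $\overline{(c_i)}$ is greedy for $\ell(x)$, and apply Proposition~\ref{p27}~(i) (including its ``furthermore'' clause) to both $(c_i)$ and $\overline{(c_i)}$. The paper's proof is just a terser statement of exactly this argument.
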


\begin{proof}
The sequence $(c_i)$ is the unique expansion of a number $x\in J_q$ if and only if it is both the greedy and the lazy expansion of $x$.
Moreover, $(c_i)$ is the lazy expansion of $x$ if and only if $\overline{(c_i)}$ is the greedy expansion of $\ell(x)$.
Proposition \ref{p27} (i) yields at once the desired characterization of $\uu_q'$.
\end{proof}
Note that sequences satisfying \eqref{41} and \eqref{42} are always doubly infinite. 
 
Motivated by Lemma \ref{l42}, it will be convenient to change our definition of $\vv_q'$ and $\vv_q$:

\begin{definition}\label{d43}
Let $q\in (1,M+1]$ and $\alpha(q)=(\alpha_i)$.
We denote by $\vv'_q$ the set of \emph{infinite} sequences $(c_i)\in A^{\NN}$ satisfying 
the following two conditions:
\begin{align}
&c_{n+1}c_{n+2} \cdots \le \alpha_1 \alpha_2 \cdots \qtq{whenever}c_n < M,\label{43}\\
&\overline{c_{n+1}c_{n+2} \cdots }\le \alpha_1 \alpha_2 \cdots \qtq{whenever}c_n>0,\label{44}
\end{align}
and we define
\begin{equation*}
\vv_q:=\set{\pi_q((c_i))\ :\ (c_i)\in\vv'_q}.
\end{equation*}
\end{definition}
We will show later in Theorem \ref{t72} that Definition \ref{d43} is equivalent to the former one, given in the introduction.
Note that sequences satisfying \eqref{43} and \eqref{44} are always doubly infinite, unless $q=M+1$.  Moreover, if $(c_i) \in \vv_q'$, then by Proposition \ref{p27} (ii), the inequalities \eqref{43} and \eqref{44} hold whenever $c_1\cdots c_n\ne M^n$ and $c_1\cdots c_n\ne 0^n$, respectively.

\begin{lemma}\label{l44}
If $1<p<q \le M+1$, then
\begin{equation*}
\uu'_p \subseteq\uu'_q,\quad
\vv'_p \subseteq\vv'_q\qtq{and}
\vv'_p \subseteq\uu'_q.
\end{equation*}
\end{lemma}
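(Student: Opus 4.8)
The plan is to reduce everything to the lexicographic characterizations of $\uu'_q$ and $\vv'_q$ (Lemma~\ref{l42} and Definition~\ref{d43}) combined with the \emph{strict} monotonicity of the map $q\mapsto\alpha(q)$ from Proposition~\ref{p26}~(ii). Write $(\alpha_i):=\alpha(q)$ and $(\alpha_i'):=\alpha(p)$. Since $p<q$, Proposition~\ref{p26}~(ii) gives the strict lexicographic inequality $\alpha_1'\alpha_2'\cdots<\alpha_1\alpha_2\cdots$, and this single fact drives all three inclusions.

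First I would prove the key inclusion $\vv'_p\subseteq\uu'_q$. Let $(c_i)\in\vv'_p$. By Definition~\ref{d43} it satisfies $c_{n+1}c_{n+2}\cdots\le\alpha_1'\alpha_2'\cdots$ whenever $c_n<M$, and $\overline{c_{n+1}c_{n+2}\cdots}\le\alpha_1'\alpha_2'\cdots$ whenever $c_n>0$. Combining each of these weak inequalities with the strict inequality $\alpha_1'\alpha_2'\cdots<\alpha_1\alpha_2\cdots$ and transitivity of the lexicographic order yields the strict inequalities $c_{n+1}c_{n+2}\cdots<\alpha_1\alpha_2\cdots$ (when $c_n<M$) and $\overline{c_{n+1}c_{n+2}\cdots}<\alpha_1\alpha_2\cdots$ (when $c_n>0$). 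These are exactly conditions \eqref{41} and \eqref{42}, so by Lemma~\ref{l42} we conclude $(c_i)\in\uu'_q$.

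For $\vv'_p\subseteq\vv'_q$ the argument is identical but needs only the weak inequality $\alpha_1'\alpha_2'\cdots\le\alpha_1\alpha_2\cdots$: from $c_{n+1}c_{n+2}\cdots\le\alpha_1'\alpha_2'\cdots\le\alpha_1\alpha_2\cdots$ (and likewise for the conjugate) we obtain conditions \eqref{43} and \eqref{44}, while infiniteness of $(c_i)$ is preserved, so $(c_i)\in\vv'_q$. Finally, the inclusion $\uu'_p\subseteq\uu'_q$ is subsumed by the first two: every sequence in $\uu'_p$ satisfies the strict inequalities \eqref{41}, \eqref{42}, hence a fortiori the weak inequalities \eqref{43}, \eqref{44}, and is doubly infinite, so $\uu'_p\subseteq\vv'_p$; composing this with $\vv'_p\subseteq\uu'_q$ gives $\uu'_p\subseteq\uu'_q$.

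There is essentially no obstacle here; the only point requiring care is the strict-versus-weak distinction between the defining inequalities of $\uu'_q$ and $\vv'_q$. That gap is bridged precisely by the \emph{strictness} of the monotonicity of $\alpha$, which is why the hypothesis $p<q$ (rather than merely $p\le q$) is essential for the inclusion $\vv'_p\subseteq\uu'_q$.
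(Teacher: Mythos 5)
Your proposal is correct and follows essentially the same route as the paper, which proves all three inclusions by combining the lexicographic characterizations (Lemma~\ref{l42} and Definition~\ref{d43}) with the strict monotonicity of $q\mapsto\alpha(q)$ from Proposition~\ref{p26}~(ii), using strictness precisely where the weak inequalities of $\vv'_p$ must be upgraded to the strict ones of $\uu'_q$. The only cosmetic difference is that you obtain $\uu'_p\subseteq\uu'_q$ by composing $\uu'_p\subseteq\vv'_p$ with $\vv'_p\subseteq\uu'_q$, while the paper treats it directly; both are immediate.
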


\begin{proof}
The inclusions $\uu'_p \subseteq\uu'_q$ and $\vv'_p \subseteq\vv'_q$ follow from Lemma \ref{l42}, Definition \ref{d43} and Proposition \ref{p26} (ii). Moreover, since the map $q \mapsto \alpha(q)$ is \emph{strictly}  increasing, the inclusion $\vv'_p \subseteq\uu'_q$ holds as well.
\end{proof}

The sets $\uu_p'$ and $\vv_p'$ only have a nontrivial structure if $p \in (\tilde q,M+1)$ and $p \in [\tilde q,M+1)$ respectively. Here $\tilde q$ is the generalized Golden ratio whose exact value (depending on $M$) is given in Theorem \ref{t33} (ii). We will illustrate this in the following examples. The results of Examples \ref{e45} (ii), (iii) and (iv) are already established by Baker (\cite{[B]}). We provide short alternative proofs of these results by refining an idea contained in the proof of Theorem 3 in [EJK]. 

\begin{examples}\label{e45}\mbox{}

\begin{enumerate}[\upshape (i)]
\item First, let $M=2m$ and $q=m+1=\tilde q$ for some $m\ge 1$, so that $\alpha(q)=m^{\infty}$. 
It follows from Definition  \ref{d43} that 
\begin{equation*}
\vv_{\tilde q}'=\set{0^{\infty},M^{\infty}} \cup \set{km^{\infty} \ : 0 < k < M} 
\cup\set{0^n l m^{\infty}, M^n\overline{l}m^{\infty}\ :\ n\ge 1, \ 1 \le l \le m}.  
\end{equation*}
If  $M=2m-1$ and $q=\tilde q$ for some $m\ge 1$, then $\alpha(q)=(mm^-)^{\infty}$. Definition \ref{d43} yields in this case that  
\begin{align*}
\vv_{\tilde q}' &=  \set{0^{\infty},M^{\infty}}
\cup \set{k(mm^-)^{\infty}, k(m^-m)^{\infty} \ : 0 <  k < M} \\
 & \cup \set{0^n l(mm^-)^{\infty},
0^n r(m^-m)^{\infty} \ : n \ge 1,  1 \le  l <  m, \ 1 \le r \le m} \\
 & \cup 
\set{M^n l (mm^-)^{\infty},
M^n r (m^-m)^{\infty} 
\ :\ n\ge 1, m-1 \le  l < M, \ m \le r < M}.
\end{align*}
In both cases (i.e., for all $M \ge 1$), we infer from Lemma \ref{l42} that 
\begin{equation*} 
\uu_{\tilde q}'=\set{0^{\infty},M^{\infty}}.
\end{equation*}
Hence, by Lemma ~\ref{l44}, $\uu_q=\vv_q=\set{0, M/(q-1)}$ if $q \in (1,\tilde q)$ and $\vv_q \supseteq \uu_q \supsetneq \set{0,M/(q-1)}$ if $q \in (\tilde q, M+1]$.
\item  In the remaining Parts (ii), (iii) and (iv) of this example, $M \ge 1$ and  $q \in (1, \tilde q)$ are fixed but arbitrary, and $m$ is defined as before: $m=(M+1)/2$ if $M$ is odd, and $m=M/2$ if $M$ is even. We deduced in (i) that $\uu_q=\set{0,M/(q-1)}$. We now strengthen this result by proving that \emph{each} $x$ in the interior of $J_q$ has $2^{\aleph_0}$ expansions. Let $x$ be a number in the interior of $J_q$. From Theorem \ref{t33} (ii) and (vi) we infer that 
\begin{equation*}
1 = \frac{m-1}{\tilde q} + \sum_{j=2}^{\infty} \frac{M}{\tilde q ^j}.
\end{equation*}
Hence there exists a positive integer $k \ge 2$ such that 
\begin{equation}\label{unc1}
1 <  \frac{m-1}{q} + \frac{M}{q^2}+\cdots+ \frac{M}{q^k}. 
\end{equation}
Let $n_1,n_2,n_3,\ldots$ be the strictly increasing sequence consisting of positive integers which are \emph{not} multiples of $k$, i.e., $\set{n_r:r \ge 1}=\NN \setminus k \NN$. 
Furthermore, choose $k$ large enough so that the inequalities 
\begin{equation}\label{unc2}
\sum_{j=1}^{\infty} \frac{m}{q^{kj}}  \le x \quad \text{and} \quad  x < \sum_{j=1}^{\infty} \frac{M}{q^{n_j}}
\end{equation}
hold as well. 
Let $(\delta_i)=\delta_1\delta_2 \cdots$ be an arbitrary sequence in $\set{0,m}^{\mathbb{N}}$, and let 
\begin{equation*}
x':=x- \sum_{j=1}^{\infty} \frac{\delta_j}{q^{kj}}.
\end{equation*}
Note that $x' \ge 0$ by the first inequality of \eqref{unc2}. We now define recursively an expansion $(\varepsilon_i)=\varepsilon_1 \varepsilon_2 \cdots$ of $x'$ by applying a variant of the greedy algorithm as follows.  If for some $n \ge 1$, the digits $\varepsilon_1, \varepsilon_2,\ldots,\varepsilon_{n-1}$ are already defined (no condition if $n=1$) and $n$ is not a multiple of $k$, then let $\varepsilon_{n}$ be the largest digit in the (whole) alphabet  $A$ such that 
\begin{equation}\label{unc3}
\left(\sum_{j=1}^{n-1} \frac{\varepsilon_j}{q^j} \right)+ \frac{\varepsilon_{n}}{q^{n}} \le x'. 
\end{equation}
If $n$ is a multiple of $k$, then let $\varepsilon_n$ be the largest element in $\set{0,\ldots,m-1}$ for which \eqref{unc3} holds. 
By the second inequality of \eqref{unc2}, there exists an index $\ell$ which is not a multiple of $k$ such that $\varepsilon_{\ell} < M$. By \eqref{unc1}, not a single block $\varepsilon_{r+1} \cdots \varepsilon_{r+k}$ of length $k$ in the sequence $\varepsilon_{\ell+1}\varepsilon_{\ell+2} \cdots$ can be \emph{maximal}, that is, it cannot be the case that both $\varepsilon_{r+i}=m-1$ for $i$ such that $r+i$ is a multiple of $k$ and 
$\varepsilon_{r+i}= M$ for the remaining indices $r+i$ $(1 \le i \le k)$. Hence there are infinitely many indices $s$ such that 
\begin{equation*}
x'-\frac{1}{q^{s}} < \sum_{j=1}^{s} \frac{\varepsilon_j}{q^j}  \le x'.
\end{equation*}
Letting $s \to \infty$ along these indices, we see that $(\varepsilon_i)$ is indeed an expansion of $x'$. The expansion $(c_i)$ with $c_i=\varepsilon_i$ if $i$ is not a multiple of $k$ and with $c_{kj}=\delta_j+\varepsilon_{kj}$ ($j \ge 1$) is an expansion of $x$. It remains to observe that distinct sequences $(\delta_i) \in \set{0,m}^{\infty}$ give rise to distinct expansions $(c_i)$ of $x$ because $\delta_j=0$ if and only if $c_{kj} < m$ $(j \ge 1)$.  

\item Let $\ee_{q}(x)$ be the set of all possible expansions of $x \in J_q$, and let $\ee_{q}^n(x)$ be the set of all possible prefixes of length $n$ $(n \ge 1)$ of sequences belonging to $\ee_{q}(x)$: 
\begin{equation*}
 \ee_{q}^{n}(x):=\set{(c_1 ,\ldots,c_n) \in A^n: \exists (c_{n+1},c_{n+2}, \ldots) \in A^{\NN} \text{ so that } (c_i) \in \ee_{q}(x)}. 
\end{equation*}
Finally, let $\nn_n(x,q):= \vert \ee_{q}^n(x) \vert$. The analysis of (ii) can be used to show that $\nn_n(x,q)$ grows exponentially fast as a function of $n$ for $x$ in the interior of $J_q$, in the rather strong sense that there exists a constant $c=c(q,M)>0$ that does not depend on $x$ so that for \emph{each} $x$ in the interior of $J_q$, the inequality 
\begin{equation*}
\liminf_{n \to \infty} \frac{\log_{M+1} \left(\nn_n(x,q) \right)}{n} \ge c  
\end{equation*}
holds. Indeed, fix $x$ in the interior of $J_q$, and let $(c_i)$ and $(d_i)$ be two distinct expansions of $x$. Thanks to (ii), we can assume that $(c_i)$ and $(d_i)$ do not end with $M^{\infty}$.  If $r=r(x)$ is the smallest index such that $c_r \not=d_r$, then $c_{r}c_{r+1}c_{r+2}\cdots$ and $d_{r}d_{r+1}d_{r+2}\cdots$ must be expansions of a number belonging to 
\begin{equation*}
\left[L,R \right):=\left[\frac{1}{q}, \frac{M-1}{q} + \frac{M}{q\left(q-1\right)} \right).
\end{equation*} 
Hence, it is sufficient to show that the assertion holds for each $x$ in $\left[L, R \right)$. It follows from the proof of (ii) that one may take $c=\log 2 / (k \cdot \log (M+1))$, where $k$ is the smallest positive integer such that \eqref{unc1} and the two inequalities 
\begin{equation}\label{unc4}
\sum_{j=1}^{\infty} \frac{m}{q^{kj}}  \le \frac{1}{q} \quad \text{and} \quad  \frac{M-1}{q} + \frac{M}{q \left(q-1\right)} \le \sum_{j=1}^{\infty} \frac{M}{q^{n_j}}
\end{equation}
hold, where, as before, $n_1,n_2,n_3,\ldots$ is the strictly increasing sequence of all positive integers which are not multiples of $k$.  

\item The Tychonov product topology on $A^{\NN}$ is induced by the metric $d$ defined as follows: 
\begin{equation*}
d((c_i),(d_i)):=
\begin{cases}
(M+1)^{-n} & \text{ if } (c_i) \not = (d_i) \text{ and }  n \text{ is the first index such that } c_n \not = d_n,  \\
0  & \text{ if } (c_i)=(d_i).
\end{cases}
\end{equation*}
We want to show that in the metric space $\left (A^{\NN}, d \right)$, the Hausdorff dimension of the subset $\ee_{q}(x)$ is, for \emph{each} $x$ in the interior of $J_q$, bounded below by the same constant $c(q,M)$ that we found in (iii). We may (and will) again assume that $x$ belongs to the interval $\left[L,R\right)$ that we defined in (iii).

Let $k$ be a positive integer satisfying \eqref{unc1} and both inequalities in \eqref{unc4}, and define the Lipschitz map $f: A^{\NN} \to A^{\NN}$ by 

\begin{equation*}
(f(c_i))_n: = 
\begin{cases}
0 & \text{ if } n \text{ is not a multiple of } k,\\
0 & \text{ if } n \text{ is a multiple of } k \text{ and } c_n < m,\\
1 & \text{ if } n \text{ is a multiple of } k \text{ and } c_n \ge m.
\end{cases}
\end{equation*}
We have seen in Part (ii) that $f\left(\ee_q(x)\right)= f \left(A^{\NN}\right)$.  The bi-Lipschitz map $g: f \left(A^{\NN}\right) \to [0,1]$ given by $g(f(c_i))=\sum_{n=1}^{\infty} (f(c_i))_n \cdot (M+1)^{-n}$ maps $f \left(A^{\NN}\right)$ onto the attractor of the iterated function system that consists of the similarities $T:[0,1] \to [0,1]$ and $S:[0,1] \to [0,1]$, defined by $T(y)=y \cdot (M+1)^{-k}$ and $S(y)=(M+1)^{-k} + y \cdot (M+1)^{-k}$, $0 \le y \le 1$. By Propositions 9.6 and 9.7 in \cite{[F]}, the Hausdorff dimension of this attractor equals the solution of the equation $2 \cdot (M+1)^{-ks} =1$. 
Hence 
\begin{equation*}
\dim_H \left(\ee_q(x)\right) \ge \dim_H f \left(\ee_q (x) \right) = \dim_H  f \left(A^{\NN}\right) = \dim_H g \left( f \left(A^{\NN} \right)\right) =\frac{\log 2}{k \cdot \log (M+1)}.
\end{equation*}     
\end{enumerate}
\end{examples}

We infer from Definition \ref{d43} the following useful characterizations of $\vv_q$ which were already obtained in \cite{[DVK2]} in case $q \in (M,M+1]$ :

\begin{lemma}\label{l47}
Let $(x,q) \in \bfj$, and write $(\alpha_i)=\alpha(q)$, $(\beta_i)=\beta(q)$ and $(a_i)=a(x,q)$.
The following conditions are equivalent:
\begin{align}
&x\in\vv_q;\label{45}\\
&\overline{a_{n+1}a_{n+2}\cdots }\le\alpha(q)\qtq{whenever}a_n>0;\label{46}\\
&\overline{a_{n+1}a_{n+2}\cdots }\le\beta(q)\qtq{whenever}a_n>0.\label{47}
\end{align}
\end{lemma}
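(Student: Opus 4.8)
The plan is to route everything through condition \eqref{46}, proving \eqref{45}$\Leftrightarrow$\eqref{46} and \eqref{46}$\Leftrightarrow$\eqref{47} separately; the genuine content sits in passing from $\beta(q)$ back to $\alpha(q)$ in the second equivalence.

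For \eqref{45}$\Leftrightarrow$\eqref{46} I would lean on Proposition~\ref{p27}~(ii): an \emph{infinite} sequence satisfies \eqref{43} exactly when it is the quasi-greedy expansion of the number it represents. Hence if $x\in\vv_q$, any witness $(c_i)\in\vv_q'$ is infinite and satisfies \eqref{43}, so it must coincide with $a(x,q)=(a_i)$, and then \eqref{44} for $(c_i)$ is precisely \eqref{46}. Conversely $(a_i)$ is always infinite and satisfies \eqref{43} by Proposition~\ref{p27}~(ii), so once \eqref{46} (which \emph{is} \eqref{44}) is assumed, $(a_i)\in\vv_q'$ and $x=\pi_q((a_i))\in\vv_q$ by Proposition~\ref{p22}~(i). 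This direction is routine.

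Since $\alpha(q)\le\beta(q)$, the implication \eqref{46}$\Rightarrow$\eqref{47} is immediate. The crux is \eqref{47}$\Rightarrow$\eqref{46}. If $\beta(q)$ is infinite, then $\alpha(q)=\beta(q)$ by Lemma~\ref{l28}~(i) applied to $x=1$, and the two conditions literally coincide. So I may assume $\beta(q)=\beta_1\cdots\beta_m0^{\infty}$ with $\beta_m>0$; then $q<M+1$, whence $a(x,q)$ is doubly infinite by Proposition~\ref{p22}~(ii), and $\alpha(q)=((\beta_1\cdots\beta_m)^-)^{\infty}$ by Lemma~\ref{l28}~(iv). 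Arguing by contradiction, suppose \eqref{46} fails at some $n_0$ with $a_{n_0}>0$, and set $(d_i):=\overline{a_{n_0+1}a_{n_0+2}\cdots}$. As a reflected tail of a doubly infinite sequence, $(d_i)$ is itself doubly infinite, with $\alpha(q)<(d_i)$, while \eqref{47} at $n_0$ gives $(d_i)\le\beta(q)$. Since $\alpha(q)$ and $\beta(q)$ first differ in position $m$ (where $\beta_m^-<\beta_m$), a short case analysis on the first index $t$ at which $(d_i)$ and $\alpha(q)$ differ (necessarily $d_t>\alpha_t$) shows $t>m$: the alternatives $t<m$ and $t=m$ force $(d_i)>\beta(q)$ or $(d_i)=\beta(q)$, contradicting either $(d_i)\le\beta(q)$ or the infiniteness of $(d_i)$.

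The decisive step is then to relocate the violation to a later position where \eqref{47} can be invoked. Writing $t=pm+s$ with $p\ge1$ and $1\le s\le m$, periodicity of $\alpha(q)$ gives $d_{pm}=\alpha_m=\beta_m-1<M$, so $a_{n_0+pm}>0$ and \eqref{47} applies at $n_0+pm$, yielding $d_{pm+1}d_{pm+2}\cdots\le\beta(q)$. On the other hand, the same periodicity gives $d_{pm+1}\cdots d_{pm+s-1}=\beta_1\cdots\beta_{s-1}$ together with $d_{pm+s}>\alpha_s$, from which $d_{pm+1}d_{pm+2}\cdots>\beta(q)$ in every case (when $s=m$ and $d_{pm+s}=\beta_m$ one uses once more that $(d_i)$ is infinite to exceed the tail $0^{\infty}$ of $\beta(q)$). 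This contradiction establishes \eqref{46}. I expect this relocation argument — exploiting the periodicity of $\alpha(q)$ and applying \eqref{47} at a position \emph{different} from the one where \eqref{46} fails — to be the main obstacle, precisely because for $q<M+1$ the lexicographic order does not match the order of the values $\pi_q$, so no purely lexicographic comparison of a single tail $(d_i)$ with $\beta(q)$ can suffice.
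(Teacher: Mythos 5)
Your proof is correct and follows essentially the same route as the paper's: the equivalence \eqref{45}$\Leftrightarrow$\eqref{46} via Proposition~\ref{p27}~(ii), the trivial implication \eqref{46}$\Rightarrow$\eqref{47}, and for the converse the same key idea of using $\alpha(q)=((\beta_1\cdots\beta_m)^-)^{\infty}$ to relocate the violation to a block boundary $n_0+pm$ (the paper's $m:=n+jk$), where $a_{n_0+pm}>0$ and the reflected tail exceeds $\beta(q)$ thanks to the infiniteness of the quasi-greedy expansion. The only difference is presentational: you argue by contradiction applying \eqref{47} at the relocated index, whereas the paper proves the contrapositive by directly exhibiting the index where the $\beta$-inequality fails.
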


\begin{proof}
Assume \eqref{45}.
If $(c_i)\in\vv'_q$ and $x=\pi_q((c_i))$, then $(c_i)=a(x,q)$ by \eqref{43} and Proposition \ref{p27} (ii), and then \eqref{46} follows from \eqref{44}.

Conversely, if $a(x,q)$ satisfies \eqref{46}, then $(c_i):=a(x,q)$ satisfies \eqref{44} by \eqref{46}, and  \eqref{43} by Proposition \ref{p27} (ii).
Hence \eqref{45} is satisfied because $a(x,q)$ is always infinite.

Since $\alpha(q)\le\beta(q)$, \eqref{46} implies \eqref{47}.
In order to show the converse implication, it suffices to show that if there exists a positive integer $n$ such that
\begin{equation*}
a_n>0\qtq{and}\overline{a_{n+1}a_{n+2}\cdots }>\alpha_1\alpha_2\cdots ,
\end{equation*}
then there exists also a positive integer $m$ such that
\begin{equation*}
a_m>0\qtq{and}\overline{a_{m+1}a_{m+2}\cdots }>\beta_1\beta_2\cdots .
\end{equation*}
If the greedy expansion $(\beta_i)$ is infinite, then $(\beta_i)=(\alpha_i)$ and we may choose $m=n$. 
If $(\beta_i)$ has a last nonzero digit $\beta_k$, then
$(\alpha_i)=(\alpha_1\cdots \alpha_k)^{\infty}$ with $\alpha_1\cdots \alpha_k=(\beta_1\cdots \beta_k)^-$, and
thus $\alpha_k<M$. 
Since we have
\begin{equation*}
\overline{a_{n+1}a_{n+2}\cdots }>(\alpha_1\cdots \alpha_k)^{\infty}
\end{equation*}
by assumption, there exists a nonnegative integer $j$ satisfying
\begin{equation*}
\overline{a_{n+1}\cdots a_{n+jk}}=(\alpha_1\cdots \alpha_k)^j
\qtq{and}
\overline{a_{n+jk+1}\cdots a_{n+(j+1)k}}>\alpha_1\cdots \alpha_k.
\end{equation*}
Putting $m:=n+jk$ it follows that
\begin{equation*}
a_{m}>0\qtq{and}\overline{a_{m+1}\cdots a_{m+k}}\ge\beta_1\cdots \beta_k.
\end{equation*}
It follows from our assumption $\overline{a_{n+1}a_{n+2}\cdots }>\alpha_1\alpha_2\cdots$ that
$(\alpha_i)<M^{\infty}$ and $(a_i) < M^{\infty}$. 
By Proposition~\ref{p22}, $(a_i)$ has no tail equal to $M^{\infty}$, so that $\overline{a_{m+k+1}a_{m+k+2}\cdots }>0^{\infty}$. 
We conclude that
\begin{equation*}
\overline{a_{m+1}a_{m+2}\cdots}>\beta_1\beta_2\cdots .\qedhere
\end{equation*}
\end{proof}

\begin{lemma}\label{l48}
Let $q \in (1,M+1]$.
\begin{enumerate}[\upshape (i)]
\item We have $\uu_q \subseteq \vv_q$.
\item The sets $\uu_q$ and $\vv_q$ are \emph{symmetric} in $J_q$, i.e., $\ell(\uu_q)=\uu_q$ and $\ell(\vv_q)=\vv_q$.
\item The set $\vv_q$ is closed.
\end{enumerate}
\end{lemma}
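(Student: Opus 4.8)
The plan is to dispatch the three assertions in turn, reducing each to the lexicographic descriptions of $\uu_q'$ and $\vv_q'$ already in hand.

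\emph{Part (i).} I would work at the level of sequences. By Lemma~\ref{l42} every $(c_i)\in\uu_q'$ satisfies the \emph{strict} inequalities \eqref{41} and \eqref{42} and is in particular doubly infinite, hence infinite. Since a strict inequality implies the corresponding non-strict one, such a $(c_i)$ satisfies \eqref{43} and \eqref{44}, so $(c_i)\in\vv_q'$ by Definition~\ref{d43}; that is, $\uu_q'\subseteq\vv_q'$. As the unique expansion of any $x\in\uu_q$ lies in $\uu_q'$, applying $\pi_q$ gives $\uu_q=\pi_q(\uu_q')\subseteq\pi_q(\vv_q')=\vv_q$.

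\emph{Part (ii).} For $\uu_q$ I would invoke that $(c_i)\mapsto\overline{(c_i)}$ is a bijection between the expansions of $x$ and those of $\ell(x)$ (noted before Lemma~\ref{l25}); hence $x$ has a unique expansion iff $\ell(x)$ does, so $\ell(\uu_q)=\uu_q$. For $\vv_q$ the case $q=M+1$ is immediate, since then $\vv_q=J_q$ and $\ell$ maps $J_q$ onto itself. For $q\in(1,M+1)$ I would check that conjugation interchanges \eqref{43} and \eqref{44}: writing $d_i=M-c_i$, the hypothesis $d_n<M$ means $c_n>0$ while $d_{n+1}d_{n+2}\cdots=\overline{c_{n+1}c_{n+2}\cdots}$, so \eqref{43} for $(d_i)$ is precisely \eqref{44} for $(c_i)$, and symmetrically. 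Thus $(c_i)$ satisfies \eqref{43}--\eqref{44} iff $\overline{(c_i)}$ does; since every such sequence is automatically doubly infinite (remark after Definition~\ref{d43}), this yields $\overline{\vv_q'}=\vv_q'$. Because $\pi_q(\overline{(c_i)})=\frac{M}{q-1}-\pi_q((c_i))=\ell(\pi_q((c_i)))$, I conclude $\ell(\vv_q)=\pi_q(\overline{\vv_q'})=\vv_q$.

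\emph{Part (iii).} Once more $q=M+1$ is immediate, as then $\vv_q=J_q=[0,1]$ is closed. For $q\in(1,M+1)$ the strategy is to show that $\vv_q'$ is a closed, hence compact, subset of $A^{\NN}$ in the product topology, and then to use that $\pi_q$ is continuous, so that $\vv_q=\pi_q(\vv_q')$ is compact and therefore closed in $\RR$. The key point is that for the fixed sequence $\alpha(q)$ the set $\set{(a_i):(a_i)\le\alpha(q)}$ is closed: its complement consists of sequences $(a_i)>\alpha(q)$, and any sequence agreeing with such an $(a_i)$ up to the index of first difference also exceeds $\alpha(q)$, so the complement is a union of cylinders and is open. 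Consequently the set defined by \eqref{43} is the intersection over $n$ of the closed sets $\set{c_n=M}\cup\set{c_{n+1}c_{n+2}\cdots\le\alpha(q)}$ (the second being the preimage of a closed set under the continuous shift), and similarly for \eqref{44} with $\overline{c_{n+1}c_{n+2}\cdots}$; their intersection, which is $\vv_q'$, is closed.

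The one place needing genuine care — the main obstacle — is the interplay between the \emph{infinite} clause in Definition~\ref{d43} and the topology of $A^{\NN}$: the set of infinite sequences is \emph{not} closed, since a tail $M^{\infty}$ can be pushed to infinity in a limit, so a naive compactness argument would fail (and indeed does at $q=M+1$). I would resolve this exactly as above, using the remark after Definition~\ref{d43} to note that for $q<M+1$ the infinite clause is automatic and imposes no extra, non-closed, constraint, and handling $q=M+1$ separately through $\vv_q=J_q$. It remains only to verify carefully that the lexicographic closedness argument is applied to the correctly shifted (and conjugated) sequences.
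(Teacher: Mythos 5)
Your proposal is correct, and parts (i) and (ii) follow essentially the paper's own route: (i) amounts to noting that the strict inequalities \eqref{41}--\eqref{42} imply the weak ones \eqref{43}--\eqref{44} (the paper phrases this via Lemma~\ref{l47}: the unique expansion is quasi-greedy, so \eqref{46} follows from \eqref{42}), and (ii) rests on the reflection bijection between the expansions of $x$ and of $\ell(x)$ together with the fact that conjugation interchanges \eqref{43} and \eqref{44} and preserves double infiniteness when $q<M+1$ (the paper says the same thing by identifying $a(\ell(x),q)=\overline{a(x,q)}$ via Proposition~\ref{p27}~(ii)). Part (iii), however, takes a genuinely different route. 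The paper works on the real line: for $x\in J_q\setminus\vv_q$ it chooses $n,m$ with $a_n>0$ and $\overline{a_{n+1}\cdots a_{n+m}}>\alpha_1\cdots\alpha_m$, uses Lemma~\ref{l29} to see that $a_1\cdots a_{n+m}0^{\infty}$ is the greedy expansion of some $y<x$, concludes that every $v\in(y,x]$ has quasi-greedy expansion beginning with $a_1\cdots a_{n+m}$, hence $(y,x]\cap\vv_q=\varnothing$, and finally invokes the symmetry from (ii) to obtain a free interval on the other side of $x$; thus $J_q\setminus\vv_q$ is open in $J_q$. You instead prove that for $q\ne M+1$ the set $\vv_q'$ is closed, hence compact, in $A^{\NN}$ (the complement of conditions \eqref{43}--\eqref{44} is a union of cylinders), and push compactness through the continuous map $\pi_q$. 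Your treatment of the one delicate point is right: the ``infinite'' clause in Definition~\ref{d43} is automatic for $q\ne M+1$ by the remark following that definition, and $q=M+1$ is handled separately since $\vv_{M+1}=[0,1]$. In effect you are proving the paper's later Lemma~\ref{l82} and deriving (iii) from it; this is more conceptual, does not need the symmetry statement (ii) as an input, and yields the stronger symbolic fact (compactness of $\vv_q'$) for free, whereas the paper's argument stays within the lexicographic toolkit available at that point and produces the one-sided $\vv_q$-free intervals that prefigure the local analysis (as in Lemma~\ref{l49}) used later on.
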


\begin{proof}
(i) If $x\in\uu_q$, then its unique expansion $(c_i)$ is equal to $a(x,q)$, and \eqref{46} follows from \eqref{42}.
\medskip

(ii) The set $\uu_q$ is symmetric because $(c_i)$ is an expansion of $x$ if and only if $(M-c_i)$ is an expansion of $\ell(x)$.

If $q\in(1,M+1)$ and $x \in J_q$, then $a(x,q)$ is doubly infinite by Proposition~\ref{p22} (ii), and hence the sequence $\overline{a(x,q)}$ is also infinite. 
Using \eqref{46} and  applying Proposition~\ref{p27} (ii) hence we infer that $a(\ell(x),q)=\overline{a(x,q)}$, and then $\ell(x)\in\vv_q$ by \eqref{26} and \eqref{46}. 
If $q=M+1$, then $\vv_q = J_q = [0,1]$ is symmetric.
\medskip 

(iii) We prove that $\vv_q$ is closed by showing that its complement in $J_q$ is
open. If $(a_i)$ is the quasi-greedy expansion of some $x \in J_q
\setminus \vv_q$, then there exists an integer $n >0$ such that
\begin{equation*}
a_n >0 \quad \mbox{and} \quad \overline{a_{n+1} a_{n+2} \cdots}
> \alpha_1 \alpha_2 \cdots .
\end{equation*}
Let $m$ be such that
\begin{equation}\label{48}
\overline{a_{n+1} \cdots a_{n+m}} > \alpha_1 \cdots \alpha_m,
\end{equation}
and let
\begin{equation*}
y=\sum_{i=1}^{n+m} \frac{a_i}{q^i}.
\end{equation*}
According to Lemma~\ref{l29} the greedy expansion of $y$ is given by
$a_{1} \cdots a_{n+m}0^{\infty}$. Therefore the quasi-greedy expansion of each
number $v \in (y,x]$ starts with the block $a_1 \cdots a_{n+m}$.
It follows from \eqref{48} that
\begin{equation*}
(y,x] \cap \vv_q = \varnothing.
\end{equation*}
Since $x \in J_q \setminus \vv_q$ is arbitrary and $\vv_q$ is symmetric, there also exists a number $z > x$ such that
\begin{equation*}
[x, z) \cap \vv_q = \varnothing.
\qedhere
\end{equation*}
\end{proof}

\begin{lemma}\label{l49}
Let $q\in(1,M+1]$, $(\alpha_i)=\alpha(q)$, and let $(b_i)$ be the greedy expansion of some $x \in J_q$.
Suppose that for some $n \geq 1$,
\begin{equation*}
b_n > 0 \quad \mbox{and} \quad \overline{b_{n+1}b_{n+2}\cdots} > \alpha_1\alpha_2 \cdots .
\end{equation*}
Then:
\begin{enumerate}[\upshape (i)]
\item there exists a number $z > x$ such that $[x,z] \cap \uu_q =\varnothing$ and $(x,z] \cap \vv_q=\varnothing$; 
\item if $b_j >0$ for some $j >n$, then there exists a number
$y < x$ such
that $[y,x] \cap \uu_q =\varnothing$.
\end{enumerate}
\end{lemma}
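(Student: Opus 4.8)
The plan is to treat both parts by a single mechanism: locate a \emph{finite} witness block at which the relevant lexicographic characterization fails, propagate it to a one-sided neighborhood of $x$ via the semicontinuity of (quasi-)greedy expansions in Lemma~\ref{l21}, and then read off that no univoque (resp. doubly-infinite-univoque) number can live there. For Part~(i) I approach $x$ from above and control the \emph{greedy} expansion via Lemma~\ref{l21}~(i)--(ii); for Part~(ii) I approach from below and control the \emph{quasi-greedy} expansion via Lemma~\ref{l21}~(iii)--(iv). Throughout I fix $m\ge 1$ to be the smallest index with $\overline{b_{n+1}\cdots b_{n+m}}>\alpha_1\cdots\alpha_m$, which exists by hypothesis. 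Note $b_n>0$ forces $x>0$, and the hypothesis forces $b(x,q)\ne M^{\infty}$ (otherwise $\overline{b_{n+1}b_{n+2}\cdots}=0^{\infty}$ could not exceed $\alpha_1\alpha_2\cdots$), hence $x<M/(q-1)$; so there is room on both sides.

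For Part~(i), combining Lemma~\ref{l21}~(i) with the strict monotonicity of $x\mapsto b(x,q)$ (Proposition~\ref{p27}~(i)) yields $z>x$ such that $b_1(v,q)\cdots b_{n+m}(v,q)=b_1\cdots b_{n+m}$ for every $v\in[x,z]$. If $v\in\uu_q$, its unique expansion is $b(v,q)$, whose $n$-th digit is $b_n>0$ while $\overline{b_{n+1}\cdots b_{n+m}}>\alpha_1\cdots\alpha_m$, contradicting \eqref{42}; hence $[x,z]\cap\uu_q=\varnothing$. For $\vv_q$ I use Lemma~\ref{l47}, so it suffices to show that $a(v,q)$ violates \eqref{46} for each $v\in(x,z]$. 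If $b(v,q)$ is infinite then $a(v,q)=b(v,q)$ by Lemma~\ref{l28}~(i) and we finish as above. If $b(v,q)$ is finite with last nonzero digit in position $M'$, I first claim $M'\ge n+m$: otherwise $b(v,q)=b_1\cdots b_{M'}0^{\infty}$ would agree with $b(x,q)$ through position $n+m$ yet, being terminated by zeros, satisfy $b(v,q)\le b(x,q)$, contradicting $v>x$ and strict monotonicity. Then Lemma~\ref{l28}~(ii) gives $a(v,q)=(b_1\cdots b_{M'})^-\alpha_1\alpha_2\cdots$, whose positions $n+1,\dots,n+m$ coincide with $b_{n+1}\cdots b_{n+m}$ when $M'>n+m$, and are $b_{n+1}\cdots b_{n+m-1}(b_{n+m}-1)$ when $M'=n+m$ (where $b_{n+m}>0$ since $M'$ is the last nonzero position). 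Since decrementing the last digit only enlarges its conjugate, the conjugate of the block of $a(v,q)$ in positions $n+1,\dots,n+m$ is $\ge\overline{b_{n+1}\cdots b_{n+m}}>\alpha_1\cdots\alpha_m$, so \eqref{46} fails and $v\notin\vv_q$.

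For Part~(ii) the extra hypothesis $b_j>0$ for some $j>n$ is precisely what guarantees that, if $b(x,q)$ is finite with last nonzero digit $b_N$, then $N>n$. I compute $a(x,q)$ by Lemma~\ref{l28}: it equals $(b_i)$ if $b(x,q)$ is infinite, and equals $(b_1\cdots b_N)^-\alpha_1\alpha_2\cdots$ if finite. In both cases $a_n(x,q)=b_n>0$, and a short comparison produces a finite $L>n$ with $\overline{a_{n+1}(x,q)\cdots a_L(x,q)}>\alpha_1\cdots\alpha_{L-n}$: when $b(x,q)$ is infinite this is the hypothesis with $L=n+m$; when finite, the only change from $(b_i)$ is the decrement at position $N>n$, which enlarges $\overline{b_N}$, so the strict block inequality either persists (taking $L=n+m$ when $n+m\le N$, possibly strengthened if $N=n+m$) or is created at position $N$ (taking $L=N$ when $n+m>N$, using minimality of $m$ to know equality holds through position $N$). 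Now Lemma~\ref{l21}~(iii) with the strict monotonicity of $x\mapsto a(x,q)$ (Proposition~\ref{p27}~(ii)) gives $y<x$ with $a_1(v,q)\cdots a_L(v,q)=a_1(x,q)\cdots a_L(x,q)$ for all $v\in[y,x]$; any $v\in\uu_q$ in this interval would have unique expansion $a(v,q)$ violating \eqref{42}, so $[y,x]\cap\uu_q=\varnothing$.

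I expect the main obstacle to be the finite-greedy bookkeeping. In Part~(i) one must exclude a short finite greedy expansion for $v$ just above $x$ (the claim $M'\ge n+m$) and check that the unavoidable decrement of the last nonzero digit cannot destroy the conjugate-block inequality. In Part~(ii) one must verify that passing from the greedy to the quasi-greedy expansion of $x$ \emph{itself}—the decrement at position $N$ followed by the tail $\alpha_1\alpha_2\cdots$—preserves a \emph{finite} witness for \eqref{42}; this is exactly where $N>n$ (equivalently, $b_j>0$ for some $j>n$) is indispensable, since for $N=n$ the decremented digit would sit at position $n$ and the resulting tail $\overline{\alpha_1\alpha_2\cdots}$ need not exceed $\alpha_1\alpha_2\cdots$, so no such witness need survive.
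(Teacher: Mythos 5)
Your proof is correct, and its overall mechanism---stabilize a prefix of the relevant expansion on a one-sided neighborhood of $x$, then invoke the lexicographic characterizations of Lemmas \ref{l42} and \ref{l47}---is the same as the paper's; but the implementation differs, most markedly in part (ii). For (i), the paper produces $z$ via Lemma \ref{l210} (ii): a greedy sequence $(c_i)>(b_i)$ agreeing with $(b_i)$ on a long prefix, and then sandwiches both the greedy expansions of points of $[x,z]$ and the quasi-greedy expansions of points of $(x,z]$ between $b(x,q)$ and $(c_i)$. You instead get the greedy prefix stabilization from Lemma \ref{l21} (i) plus monotonicity (essentially equivalent, since Lemma \ref{l210} (ii) is itself derived from Lemma \ref{l21}), and then control quasi-greedy expansions by computing them from greedy ones through Lemma \ref{l28}; your finite/infinite case analysis (the claim $M'\ge n+m$ and the decrement bookkeeping) is the price for making explicit what the paper leaves implicit, namely that quasi-greedy expansions on $(x,z]$ also begin with the stabilized block. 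For (ii) the routes genuinely diverge: the paper simply takes $y=\pi_q(b_1\cdots b_n0^{\infty})$, which is greedy by Lemma \ref{l29} and satisfies $y<x$ exactly because $b_j>0$ for some $j>n$; then the greedy expansion $(d_i)$ of any $v\in[y,x]$ is sandwiched between $b_1\cdots b_n0^{\infty}$ and $(b_i)$, forcing $d_1\cdots d_n=b_1\cdots b_n$ and $d_{n+1}d_{n+2}\cdots\le b_{n+1}b_{n+2}\cdots$, and conjugation reverses the latter to $\overline{d_{n+1}d_{n+2}\cdots}\ge\overline{b_{n+1}b_{n+2}\cdots}>\alpha_1\alpha_2\cdots$. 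No semicontinuity and no finite witness are needed there, since the entire infinite tail inequality survives the sandwich. Your detour through $a(x,q)$, Lemma \ref{l21} (iii) and the minimality of $m$ is nevertheless sound; in particular your key claim that minimality forces digit-wise equality through position $N$ is valid, because a strict inequality $\overline{b_{n+1}\cdots b_{n+l}}<\alpha_1\cdots\alpha_l$ at any length $l<m$ would propagate to length $m$ and contradict the strict reverse inequality there. What your version buys is uniformity (both parts run on the same semicontinuity engine) and fully explicit quasi-greedy computations; what the paper's buys is brevity, especially in (ii), where pure order reversal under conjugation replaces all witness extraction.
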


\begin{proof}
(i) Choose a positive integer $M >n$ such that
\begin{equation*}
\overline{b_{n+1}\cdots b_M} > \alpha_1 \cdots \alpha_{M-n}.
\end{equation*}
Applying Lemma~\ref{l210} choose a greedy sequence $(c_i) > (b_i)$
such that $c_1\cdots c_M$ $=b_1 \cdots b_M$. Then $(c_i)$ is the greedy
expansion of some $z > x$.
If $(d_i)$ is the greedy expansion of an element in $[x,z]$ or the quasi-greedy expansion of an element in $(x,z]$, then
$(d_i)$ also begins with  $b_1\cdots b_M$ and hence
\begin{equation*}
d_n>0
\qtq{and}
\overline{d_{n+1} \cdots d_{M}}> \alpha_1 \cdots \alpha_{M-n} .
\end{equation*}
The assertion follows from Lemmas~\ref{l42} and \ref{l47}. 

(ii) Suppose that $b_j >0$ for some $j > n$. It follows from Lemma~\ref{l29} that
$(c_i):=b_1 \cdots b_n 0^{\infty}$ is the greedy expansion of some $y < x$.
If $(d_i)$ is the greedy expansion
of some element in $[y,x]$, then $(c_i) \leq (d_i) \leq (b_i)$ and
$d_1 \cdots d_n=b_1 \cdots b_n$.
Therefore
\begin{equation*}
d_n > 0 \qtq{and} \overline{d_{n+1}d_{n+2}\cdots} \geq \overline{b_{n+1}b_{n+2}\cdots}
> \alpha_1\alpha_2 \cdots. 
\end{equation*}
Invoking  Lemma \ref{l42} again, we conclude that 
$[y,x] \cap  \uu_q = \varnothing$.
\end{proof}

\section{Two  lemmas on density}\label{s5}

The two results of this section are crucial for the proof of our main theorems.
Their proofs are based on the construction of special convergent sequences, interesting in themselves.

Let $q\in(1,M+1]$ and $(\alpha_i)=\alpha(q)$.
We recall that 
\begin{equation}\label{51}
\alpha_{n+1}\alpha_{n+2}\cdots\leq\alpha_1\alpha_2\cdots\qtq{for all}n\ge 0.
\end{equation}
Furthermore, if $q\in\uuu$, then
\begin{equation}\label{52}
\overline{\alpha_{n+1}\alpha_{n+2}\cdots} <\alpha_1\alpha_2\cdots\qtq{for all}n\ge 0;
\end{equation}
moreover, by Theorem~\ref{t32} (v) there exist arbitrarily large integers $m$ such that
\begin{equation}\label{53}
\overline{\alpha_{k+1} \cdots \alpha_{m}} < \alpha_1 \cdots \alpha_{m-k},\quad k=0,\ldots, m-1.
\end{equation}

We recall from Section \ref{s1} that $A_q$ denotes the set of elements $\vv_q \setminus \uu_q$ having a finite greedy expansion.

\begin{lemma}\label{l51}
If $q \in \vv$, then $\abs{A_q}=\aleph_0$.
Furthermore, for each $x \in \uu_q$ there exists a sequence $(x_i)$ of elements in $A_q$ such that  $x_i \to  x$ and $a(x_i,q) \to a(x,q)$. Morever, one may choose the sequence $(x_i)$ to be increasing if $x  \in \uu_q \setminus \set{0}$.  
\end{lemma}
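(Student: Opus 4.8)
The plan is to realise $A_q$ as a countable set that approximates every point of $\uu_q$ via two explicit families of points. Since every element of $A_q$ has a finite greedy expansion and $x\mapsto b(x,q)$ is injective (Proposition \ref{p27} (i)), $A_q$ is at most countable; so it suffices to construct, for each $x\in\uu_q$, a sequence in $A_q$ with the required convergence, and $\abs{A_q}=\aleph_0$ will follow because these approximants are pairwise distinct. Write $(c_i):=a(x,q)$; as $x\in\uu_q$ this is the unique expansion of $x$, so $(c_i)=b(x,q)$ is doubly infinite and satisfies the strict inequalities \eqref{41} and \eqref{42} (Lemma \ref{l42}). I shall use repeatedly that $q\in\vv$ means $\overline{\alpha_{k+1}\alpha_{k+2}\cdots}\le\alpha(q)$ whenever $\alpha_k>0$ (Proposition \ref{p31} (iii)).

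For $x=0$, where approximation from below is impossible, I would simply take $x_n:=\pi_q(0^n\alpha(q))=q^{-n}$. Its greedy expansion $0^{n-1}10^{\infty}$ is finite and its quasi-greedy expansion $0^n\alpha(q)$ lies in $\vv_q'$ — the inequalities of Definition \ref{d43} being immediate from $\alpha_1>0$, from \eqref{24}, and from $q\in\vv$ — so that $x_n\in A_q$, $x_n\downarrow 0$ and $a(x_n,q)\to 0^{\infty}=a(0,q)$.

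The heart of the matter is the increasing approximation for $x\in\uu_q\setminus\set{0}$. Then $\overline{(c_i)}\ne M^{\infty}$ is the greedy (indeed unique) expansion of $\ell(x)$, and applying Lemma \ref{l210} (i) to $\overline{(c_i)}$ furnishes arbitrarily large indices $n$ with $c_n>0$ and
\begin{equation*}
\overline{c_{m+1}\cdots c_n}<\alpha_1\cdots\alpha_{n-m}\qtq{whenever}1\le m<n\qtq{and}c_m>0.
\end{equation*}
I set $y_n:=\pi_q(c_1\cdots c_n0^{\infty})$. By Lemma \ref{l29} the sequence $c_1\cdots c_n0^{\infty}$ is greedy, so $b(y_n,q)=c_1\cdots c_n0^{\infty}$ is finite, $y_n<x$, and Lemma \ref{l28} (ii) gives $a(y_n,q)=(c_1\cdots c_n)^-\alpha(q)$. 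To conclude $y_n\in\vv_q$ I would verify, via Lemma \ref{l47}, that this quasi-greedy expansion satisfies \eqref{44} (condition \eqref{43} is automatic for a quasi-greedy expansion, and only the indices $\ge 1$ need testing). At the positions lying in the appended copy of $\alpha(q)$, and at position $n$ where the digit is $c_n-1$, the required inequalities are instances of $q\in\vv$. At a position $j$ with $1\le j<n$ and $c_j>0$ the tail to be tested is $\overline{(c_{j+1}\cdots c_n)^-}\,\overline{\alpha(q)}$, where $\overline{(c_{j+1}\cdots c_n)^-}=\overline{c_{j+1}}\cdots\overline{c_{n-1}}(\overline{c_n}+1)$ differs from $\overline{c_{j+1}\cdots c_n}$ only in its last letter. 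The good-index inequality gives $\overline{c_{j+1}\cdots c_n}<\alpha_1\cdots\alpha_{n-j}$, whence this incremented word is $\le\alpha_1\cdots\alpha_{n-j}$; if it is strictly smaller we are done, and in the borderline case of equality the digit $\alpha_{n-j}=\overline{c_n}+1$ is positive, so $q\in\vv$ yields $\overline{\alpha(q)}\le\alpha_{n-j+1}\alpha_{n-j+2}\cdots$, which is exactly what \eqref{44} requires. Thus $y_n\in A_q$; passing to a strictly increasing subsequence gives $y_n\uparrow x$, and $a(y_n,q)\to a(x,q)$ then follows from the semicontinuity in Lemma \ref{l21} (iv).

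The step I expect to be the real obstacle is precisely this $\vv_q$-membership of the truncations $y_n$: an arbitrary truncation $\pi_q(c_1\cdots c_n0^{\infty})$ need not lie in $\vv_q$, because decrementing the last nonzero digit enlarges the conjugate tail $\overline{(c_{j+1}\cdots c_n)^-}$ and can destroy \eqref{44}; the remedy is to truncate only at the conjugate good indices supplied by Lemma \ref{l210} (i), after which the single borderline case is rescued by the defining inequality of $\vv$. (It is also essential here that Lemma \ref{l47} tests only indices $\ge 1$, since the full-sequence inequality may genuinely fail.) With both families of approximants in hand, the countability of $A_q$ together with the presence of infinitely many distinct approximants yields $\abs{A_q}=\aleph_0$, completing the proof.
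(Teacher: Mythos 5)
Your proposal is correct and takes essentially the same route as the paper's own proof: both truncate the unique expansion at the "conjugate good indices" supplied by Lemma \ref{l210} (i) applied to $\overline{(c_i)}$, verify membership in $A_q$ via Lemmas \ref{l28}, \ref{l29}, \ref{l47} and Proposition \ref{p31} (iii), and settle the case $x=0$ and the countability claim using the points $q^{-n}$. The only differences are cosmetic, e.g.\ your explicit splitting into a strict-inequality case and a borderline-equality case, which the paper handles by concatenating the word inequality $\overline{(c_{j+1}\cdots c_{n})^-}\le\alpha_1\cdots\alpha_{n-j}$ with the tail inequality $\overline{\alpha(q)}\le\alpha_{n-j+1}\alpha_{n-j+2}\cdots$.
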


\begin{proof}
Let $x \in \uu_q \setminus\set{0}$, and denote by $(c_i)$ the unique expansion of $x$.
Since $\overline{c_1 c_2 \cdots}
\ne {M^{\infty}}$ is
greedy, we infer from Lemma~\ref{l210} (i)
that there exists a sequence $1 \leq n_1 < n_2 < \cdots$, such that
for each $i \geq 1$,
\begin{equation}\label{54}
c_{n_i} > 0 \quad \mbox{and} \quad
\overline{c_{m+1} \cdots c_{n_i}} < \alpha_1
\cdots \alpha_{n_i - m} \quad \mbox{if }  m < n_i \, \,
\mbox{and} \, \, c_m >0.
\end{equation}
Now consider for each $i \geq 1$ the sequence $(b_j^i)$, given by
\begin{equation*}
(b_j^i) = c_1 \cdots c_{n_i} 0 ^{\infty},
\end{equation*}
and define the number $x_i$ by
\begin{equation*}
x_i = \sum_{j=1}^{\infty} \frac{b_j^i}{q^j}.
\end{equation*}
Since $(c_i)$ has infinitely many nonzero digits, we have $x_i\uparrow x$.
According to Lemma~\ref{l29} the sequence $(b_j^i)$ is the finite
greedy expansion of the number $x_i$, $i \geq 1$. Moreover, the increasing sequence
$(x_i)_{i \geq 1}$ converges to $x$ as $i$ goes to infinity. We claim that
$x_i \in A_q$ for each
$i \geq 1$. Note that $x_i \notin \uu_q$ because
the quasi-greedy sequence $(a_j^i)$, given by
\begin{equation*}
(c_1 \cdots c_{n_i})^{-} \alpha_1 \alpha_2 \cdots,
\end{equation*}
is another expansion of $x_i$. By Lemma~\ref{l47} it remains to prove that
\begin{equation}\label{55}
a_j^i >0    \Longrightarrow \overline{a_{j+1}^i a_{j+2}^i \cdots}
 \leq \alpha_1 \alpha_2 \cdots.
\end{equation}
If $j < n_i$ and $a_j^i>0$, then
\begin{equation*}
\overline{a_{j+1}^i \cdots a_{n_i}^i}
= \overline{(c_{j+1} \cdots c_{n_i})^-} \leq \alpha_1 \cdots \alpha_{n_i - j}
\end{equation*}
by \eqref{54}, and
\begin{equation*}
\overline{a_{n_i+1}^i a_{n_i+2}^i \cdots}=
\overline{\alpha_1 \alpha_2 \cdots} \le
\alpha_{n_i-j+1} \alpha_{n_i-j+2} \cdots
\end{equation*}
by Proposition~\ref{p31}. 
If $j \ge n_i$, then \eqref{55}
follows directly from Proposition~\ref{p31}.
Finally, note that $q^{-n} \to 0$ as $n \to \infty$, and that $q^{-n} \in A_q$ for each $n \in \NN$ because $\alpha(q^{-n})=0^n\alpha(q)$. 
Since there are only countably many finite expansions, we conclude that $\abs{A_q}=\aleph_0$.
\end{proof}

\begin{lemma}\label{l52}
If $q \in \uuu$, then for each $x \in A_q$ there exists a sequence $(x_i)$ of elements in $\uu_q$  such that $x_i\uparrow x$.
\end{lemma}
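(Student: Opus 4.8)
The plan is to approximate each $x\in A_q$ from below by points of $\uu_q$ whose (unique) expansions agree with the quasi-greedy expansion $a(x,q)$ on longer and longer initial blocks. Write $(\alpha_i):=\alpha(q)$ and $(a_i):=a(x,q)$. Since $x\in A_q$ its greedy expansion is finite, say $b(x,q)=b_1\cdots b_n0^\infty$ with $b_n>0$, so by Lemma \ref{l28}(ii)
\[
a(x,q)=w\,\alpha_1\alpha_2\cdots,\qquad w:=(b_1\cdots b_n)^-.
\]
Because $x\in\vv_q$, Lemma \ref{l47} gives $\overline{a_{k+1}a_{k+2}\cdots}\le\alpha_1\alpha_2\cdots$ whenever $a_k>0$ (this is \eqref{46}), while the quasi-greediness of $(a_i)$ gives $a_{k+1}a_{k+2}\cdots\le\alpha_1\alpha_2\cdots$ whenever $a_k<M$ (Proposition \ref{p27}(ii)). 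Using Theorem \ref{t32}(v) I would fix an integer $m$, as large as needed, satisfying \eqref{53}, and build from the seed $V_0:=\alpha_1\cdots\alpha_m$ the Thue--Morse-type sequence
\[
V_{r+1}:=V_r\overline{V_r}\ (r\ge 0),\qquad V_\infty:=\lim_{r\to\infty}V_r ,
\]
setting $c^{(m)}:=w\,V_\infty$. Note that $V_\infty$ begins with $V_0\overline{V_0}=\alpha_1\cdots\alpha_m\,\overline{\alpha_1\cdots\alpha_m}$.

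First I would check that $x_m:=\pi_q(c^{(m)})$ behaves as required. The sequence $c^{(m)}$ agrees with $a(x,q)$ in its first $n+m$ digits, and on the following $m$ digits it reads $\overline{\alpha_1\cdots\alpha_m}$ against $\alpha_{m+1}\cdots\alpha_{2m}$ in $a(x,q)$; since $q\in\uuu$, the strict inequality \eqref{52} gives $\overline{\alpha_{m+1}\cdots\alpha_{2m}}<\alpha_1\cdots\alpha_m$, i.e. $\overline{\alpha_1\cdots\alpha_m}<\alpha_{m+1}\cdots\alpha_{2m}$. Hence $c^{(m)}<a(x,q)$, so $x_m<x$, while the agreement on the first $n+m$ digits forces $x_m\to x$ as $m\to\infty$; a strictly increasing subsequence then converges to $x$. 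Everything therefore reduces to showing $c^{(m)}\in\uu_q'$, that is, that $c^{(m)}$ satisfies the inequalities \eqref{41} and \eqref{42} of Lemma \ref{l42}.

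I would verify these according to the starting index. For $1\le k\le n$ (inside $w$), the sequences $c^{(m)}$ and $a(x,q)$ coincide through position $n+m$. Condition \eqref{41} is then immediate: if $c^{(m)}_k=a_k<M$, then $c^{(m)}_{k+1}c^{(m)}_{k+2}\cdots<a_{k+1}a_{k+2}\cdots\le\alpha_1\alpha_2\cdots$, using $c^{(m)}<a(x,q)$ and quasi-greediness. For \eqref{42}, suppose $a_k>0$; then $\overline{a_{k+1}a_{k+2}\cdots}\le\alpha_1\alpha_2\cdots$ by \eqref{46}, and I claim this is strict with a first point of difference at a fixed finite index. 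Indeed, equality is impossible: for $k=n$ the left side is $\overline{\alpha_1\alpha_2\cdots}<\alpha_1\alpha_2\cdots$ by \eqref{52}, and for $k<n$ equality would yield, after matching the entries beyond position $n$, the relation $\overline{\alpha_{n-k+1}\alpha_{n-k+2}\cdots}=\alpha_1\alpha_2\cdots$, again contradicting \eqref{52}. As there are only finitely many such $k$, choosing $m$ large enough pushes all these points of difference inside the block on which $c^{(m)}$ and $a(x,q)$ agree, and \eqref{42} follows for $c^{(m)}$ at every such $k$.

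For indices $k\ge n+1$ the tails of $c^{(m)}$ are exactly the tails of $V_\infty$, so \eqref{41}--\eqref{42} at these indices are precisely the statement that $V_\infty$ is itself a univoque sequence. This is the heart of the matter, and it is here that the special choice \eqref{53} of the seed is used: the doubling $V_\infty$ generated by a block $\alpha_1\cdots\alpha_m$ satisfying \eqref{53} avoids the forbidden tails, exactly as the Thue--Morse sequence governs the quasi-greedy expansion of $1$ at the Komornik--Loreti constant in Theorem \ref{t32}(ii). Verifying it — that every tail of $V_\infty$ is $<\alpha_1\alpha_2\cdots$ after a digit below $M$ and every reflected tail is $<\alpha_1\alpha_2\cdots$ after a positive digit, by induction on the structure $V_{r+1}=V_r\overline{V_r}$ and repeated appeals to \eqref{52} and \eqref{53} — is the main obstacle. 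Once it is in hand, $c^{(m)}\in\uu_q'$ and the construction produces the desired sequence in $\uu_q$ increasing to $x$. (When $\alpha(q)$ is aperiodic, i.e. $q\in\uu\setminus\set{M+1}$ by Lemma \ref{l28}(iii), one may replace $V_\infty$ by the simpler tail $\overline{\alpha_1\alpha_2\cdots}$, for which \eqref{41} is \eqref{52} and \eqref{42} reduces to aperiodicity; the doubling construction is needed precisely to cover the periodic bases $q\in\uuu\setminus\uu$ and $q=M+1$.)
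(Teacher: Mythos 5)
Your construction differs from the paper's in one crucial point, and that point is fatal: the tail appended to $w$ must be the \emph{periodic} sequence $(\alpha_1\cdots\alpha_m\,\overline{\alpha_1\cdots\alpha_m})^{\infty}$ (this is what the paper uses), not the Thue--Morse-type doubling $V_\infty=V_0\overline{V_0}\,\overline{V_0}V_0\cdots$. The claim you defer as ``the heart of the matter'' --- that the tails of $V_\infty$ satisfy \eqref{41} and \eqref{42} --- is not merely unproven, it is false. The doubling creates the square $\overline{V_0}\,\overline{V_0}$ immediately after the first block: the tail of $c^{(m)}=wV_\infty$ starting after position $n+m$ is $\overline{V_0}\,\overline{V_0}\,V_0\cdots$, and its reflection begins with $V_0V_0=\alpha_1\cdots\alpha_m\,\alpha_1\cdots\alpha_m$. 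Since $\alpha_{m+1}\alpha_{m+2}\cdots\le\alpha_1\alpha_2\cdots$ by Proposition \ref{p26} (ii), this reflected tail is at least $\alpha_1\alpha_2\cdots$ in its first $2m$ digits, and is strictly larger whenever $\alpha_{m+1}\cdots\alpha_{2m}<\alpha_1\cdots\alpha_m$; in that (typical) case \eqref{42} fails at position $n+m$, whose digit is $\alpha_m$. Concretely, take $M=1$ and $q=q_{KL}$, so that $(\alpha_i)=\tau_1\tau_2\tau_3\cdots=1101001\cdots$ by Theorem \ref{t32} (ii), and take $m=2$, which satisfies \eqref{53}. Then $V_0=11$, $V_\infty=1100\,0011\cdots$, the digit at position $2$ is $1>0$, and the reflection of the tail after position $2$ is $1111\cdots>1101\cdots=(\alpha_i)$; hence $c^{(m)}\notin\uu_q'$ and $\pi_q(c^{(m)})\notin\uu_q$. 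The same failure occurs for every $m$ allowed by \eqref{53} at this base (for $m=1$ the reflected tail even equals $(\alpha_i)$, again violating the strict inequality), and also at bases $q\in\uuu\setminus\uu$ such as the Tribonacci base with $m=2$ --- i.e.\ precisely in the cases where you assert the doubling is ``needed''.

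The periodic tail is immune to this defect because there each block $V_0$ is followed by $\overline{V_0}$: the reflection of the tail after a $V_0$-block begins with $V_0\,\overline{V_0}$, and the strict inequality $\overline{\alpha_1\cdots\alpha_m}<\alpha_{m+1}\cdots\alpha_{2m}$ (the paper's \eqref{58}, derived from Theorem \ref{t33} (v) and \eqref{52}) is exactly what is needed at that junction; all remaining comparisons reduce to \eqref{51} and \eqref{53}. Note also that your fallback for $q\in\uu\setminus\set{M+1}$ --- replacing $V_\infty$ by $\overline{\alpha_1\alpha_2\cdots}$ --- produces a single number below $x$, not a sequence increasing to $x$, so it cannot close the argument either. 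Your treatment of the junction indices $k\le n$ and of the convergence $x_m\uparrow x$ is essentially sound; if you replace $V_\infty$ by $(\alpha_1\cdots\alpha_m\overline{\alpha_1\cdots\alpha_m})^{\infty}$, those parts go through and the proof can be completed along the paper's lines.
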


\begin{proof}
Let $x \in A_q$.
If $b_n$ is the last nonzero element of
$(b_i)=b(x,q)$, then
\begin{equation*}
a(x,q)=(a_i)=(b_1\cdots b_n)^{-} \alpha_1 \alpha_2 \cdots .
\end{equation*}
Since $q \in \uuu$ by assumption, there exists a sequence
$1 \leq m_1 < m_2 < \cdots $ such that \eqref{53}
is satisfied with $m=m_i$ for all $i \geq 1$.
We may assume that $m_1 \ge n$ and $m_{i+1} \ge 2m_i$ for each $i \ge 1$. 
Consider for each $i \geq 1$ the sequence $(b_j^i)$, given by
\begin{equation*}
(b_j^i)=(b_1 \cdots b_n)^-(\alpha_1 \cdots \alpha_{m_i}\overline
{\alpha_1 \cdots \alpha_{m_i}})^{\infty},
\end{equation*}
and define the number  $x_i$ by
\begin{equation*}
x_i= \sum_{j=1}^{\infty}\frac{b_j^i}{q^j}.
\end{equation*}
Note that the sequence $(x_i)$ converges to $x$ as $i$ goes to infinity. 
Next we show that $x_i \in
\uu_q$ for all $i \geq 1$.
According to Lemma~\ref{l42} it suffices to verify that
\begin{equation}\label{56}
b_{m+1}^i b_{m+2}^i \cdots < \alpha_1 \alpha_2 \cdots \quad \mbox{whenever}
\quad
b_m^i <  M
\end{equation}
and
\begin{equation}\label{57}
\overline{b_{m+1}^i b_{m+2}^i \cdots} <
\alpha_1 \alpha_2 \cdots \quad \mbox{whenever} \quad b_m^i > 0.
\end{equation}
According to \eqref{52} we have
\begin{equation*}
\overline{\alpha_{m_i+1} \cdots \alpha_{2m_i}} \leq
\alpha_1 \cdots \alpha_{m_i}.
\end{equation*}
Note that this inequality cannot be an equality, for otherwise it would
follow from Theorem \ref{t33} (v) that
\begin{equation*}
(\alpha_i)=(\alpha_1 \cdots \alpha_{m_i} \overline
{\alpha_1 \cdots \alpha_{m_i}})^{\infty}.
\end{equation*}
However, this sequence does not satisfy \eqref{52} for $k=m_i$.
Therefore
\begin{equation*}
\overline{\alpha_{m_i+1} \cdots \alpha_{2m_i}} <
\alpha_1 \cdots \alpha_{m_i}
\end{equation*}
or equivalently
\begin{equation}\label{58}
\overline{\alpha_1 \cdots \alpha_{m_i}} < \alpha_{m_i+1} \cdots \alpha_{2m_i}.
\end{equation}
If $m \geq n$, then \eqref{56} and \eqref{57}
follow from \eqref{51}, \eqref{53} and \eqref{58}. Now we verify
\eqref{56} and \eqref{57} for $m < n$. Fix $m < n$. If $b_m^i <  M$, then
\begin{equation*}
b_{m+1}^i \cdots b_n^i = (b_{m+1} \cdots b_n)^-
< b_{m+1} \cdots b_n
\leq \alpha_1 \cdots \alpha_{n-m},
\end{equation*}
where the last inequality follows from the fact that $(b_i)$ is a
greedy expansion and $b_m=b_m^i <  M.$ Hence
\begin{equation*}
b_{m+1}^i b_{m+2}^i \cdots < \alpha_1 \alpha_2 \cdots .
\end{equation*}
Suppose that $b_m^i = a_m >0.$ Since
\begin{equation*}
\overline{a_{m+1} a_{m+2} \cdots} \leq \alpha_1 \alpha_2 \cdots
\end{equation*}
by Lemma~\ref{l47}, and $b_{m+1}^i \cdots b_n^i = a_{m+1} \cdots a_n$, it suffices
to verify that
\begin{equation*}
\overline{b_{n+1}^i b_{n+2}^i \cdots} < \alpha_{n-m+1} \alpha_{n-m+2} \cdots .
\end{equation*}
This is equivalent to
\begin{equation} \label{59}
\overline{\alpha_{n-m+1} \alpha_{n-m+2} \cdots} <
 (\alpha_1 \cdots \alpha_{m_i}\overline{\alpha_1 \cdots \alpha_{m_i}})^{\infty}.
\end{equation}
Since $n \le  m_i$ for all $i \geq 1$, we infer from \eqref{53} that
\begin{equation*}
\overline{\alpha_{n-m+1} \cdots \alpha_{m_i}} < \alpha_1 \cdots
\alpha_{m_i-(n-m)},
\end{equation*}
and \eqref{59} follows. It follows from \eqref{58} and the assumption $m_{i+1} \ge 2 m_i$ $(i \ge 1)$ that the sequence $(x_i)$ is strictly increasing. 
\end{proof}

\section{Proof of Theorem~\ref{t11} and Proposition \ref{p12}}\label{s6}

First we prove the relation \eqref{11} between $A_q$ and $B_q$ as stated in Proposition  \ref{p12} (ii):

\begin{lemma}\label{l61}
If $q \in \uuu\setminus\set{M+1}$, then
\begin{equation*}
B_q = \ell(A_q),
\end{equation*}
and the  greedy expansion of each $x \in B_q$ ends with $\overline{\alpha(q)}$.
\end{lemma}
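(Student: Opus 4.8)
The plan is to exploit the symmetry of $\vv_q \setminus \uu_q$ under the reflection $\ell$, established in Lemma~\ref{l48}~(ii), together with the fact that $\ell$ is an involution. Since $\vv_q \setminus \uu_q = A_q \cup B_q$ is a disjoint union sorted according to whether the greedy expansion is finite or infinite, it suffices to prove that $\ell$ interchanges $A_q$ and $B_q$, i.e. that reflection turns a finite greedy expansion into an infinite one and vice versa; the relation $B_q = \ell(A_q)$ then follows at once. The main vehicle is the identity $a(\ell(x),q) = \overline{a(x,q)}$, valid for every $x \in \vv_q$ (this is precisely what is shown inside the proof of Lemma~\ref{l48}~(ii), using $q \ne M+1$), which lets me transport quasi-greedy expansions under $\ell$ and then recover the greedy expansion of $\ell(x)$ through Lemma~\ref{l28}.

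First I would treat the inclusion $\ell(A_q) \subseteq B_q$. For $x \in A_q$ write $b(x,q) = b_1\cdots b_n0^\infty$ with $b_n>0$, so that $a(x,q) = (b_1\cdots b_n)^-\alpha(q)$ by Lemma~\ref{l28}~(ii). Reflecting, the sequence $(d_i) := \overline{a(x,q)} = a(\ell(x),q)$ ends with $\overline{\alpha(q)}$ and is infinite, since $\alpha(q)$ is co-infinite by Proposition~\ref{p22}. I would then show that $(d_i)$ is in fact a \emph{greedy} sequence in the sense of Proposition~\ref{p27}~(i); once this is done, uniqueness of greedy expansions forces $b(\ell(x),q) = (d_i)$, which is infinite and ends with $\overline{\alpha(q)}$, so that $\ell(x) \in B_q$. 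To verify the greedy inequalities $d_{m+1}d_{m+2}\cdots < \alpha(q)$ at each position $m$ with $d_m < M$, I would split into the tail positions $m \ge n$, where the inequality is immediate from the strict estimates \eqref{52}, and the prefix positions $m < n$.

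The prefix positions are the main obstacle. There the membership $x \in \vv_q$, via \eqref{44} applied to $a(x,q)$, only yields the non-strict inequality $d_{m+1}d_{m+2}\cdots \le \alpha(q)$, whereas greediness demands strictness. I would rule out equality by noting that $d_{m+1}d_{m+2}\cdots = \alpha(q)$ would force, upon comparing the tail $\overline{\alpha(q)}$ of $(d_i)$ with the correspondingly shifted copy of $\alpha(q)$, an identity of the form $\overline{\alpha_{k+1}\alpha_{k+2}\cdots} = \alpha_1\alpha_2\cdots$ with $k = n-m \ge 1$; this contradicts \eqref{52}. This is exactly the point where the hypothesis $q \in \uuu$ (rather than merely $q \in \vv$) is indispensable, since \eqref{52} holds precisely for $q \in \uuu$.

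Finally I would establish $\ell(B_q) \subseteq A_q$, which simultaneously yields the second assertion of the lemma. For $y \in B_q$ we have $b(y,q) = a(y,q)$ by Lemma~\ref{l28}~(i); since $y \in \vv_q$ this sequence satisfies \eqref{44}, while $y \notin \uu_q$ forces the univoque condition \eqref{42} to fail, and the two together produce an index $m$ with $a_m(y,q) > 0$ and $a_{m+1}(y,q)a_{m+2}(y,q)\cdots = \overline{\alpha(q)}$. Thus $b(y,q)$ ends with $\overline{\alpha(q)}$, which proves the last claim for every $x \in B_q$. Reflecting, $a(\ell(y),q) = \overline{a(y,q)}$ has a digit strictly less than $M$ at position $m$ followed by $\alpha(q)$, so it violates the greedy inequality of Proposition~\ref{p27}~(i) there; hence it is not greedy, whence $b(\ell(y),q) \ne a(\ell(y),q)$, meaning $b(\ell(y),q)$ is finite and $\ell(y) \in A_q$. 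Combining the two inclusions with the involution property of $\ell$ gives $\ell(A_q) = B_q$.
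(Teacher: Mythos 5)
Your proposal is correct, and it reaches the conclusion by the same overall scheme as the paper (prove $\ell(A_q)\subseteq B_q$ and $\ell(B_q)\subseteq A_q$, then use that $\ell$ is an involution), but the second inclusion is handled by a genuinely different mechanism. The first inclusion matches the paper's argument almost step for step: both reflect the quasi-greedy expansion $(b_1\cdots b_n)^-\alpha(q)$ of $x\in A_q$, verify the greedy criterion \eqref{25} by splitting at position $n$, obtain the non-strict inequality at prefix positions from \eqref{46}, and exclude equality using the strict inequalities \eqref{52}, which is exactly where $q\in\uuu$ enters. For the second inclusion the paper is constructive: after deriving, just as you do, that $b(y,q)$ ends with $\overline{\alpha(q)}$ (by combining Lemma~\ref{l47} with the failure of \eqref{42}), it takes the \emph{smallest} index $n$ at which \eqref{61} holds and shows, by a further lexicographic computation that relies on this minimality, that $\overline{(b_1\cdots b_n)^-}\,0^{\infty}$ is precisely the greedy expansion of $\ell(y)$. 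You bypass all of this: since $a(\ell(y),q)=\overline{a(y,q)}$ visibly violates \eqref{25} at the position preceding the tail $\alpha(q)$, it cannot be greedy, so $b(\ell(y),q)\ne a(\ell(y),q)$, and the contrapositive of Lemma~\ref{l28}~(i) then forces $b(\ell(y),q)$ to be finite. This is shorter and needs neither the minimality argument nor the identification of the greedy expansion of $\ell(y)$; the trade-off is that the paper's version yields that expansion explicitly (information of the kind reused later, e.g.\ in the description of endpoints in Theorem~\ref{t19}~(ii)), whereas yours certifies only finiteness, which is all the lemma demands. Your reliance on the identity $a(\ell(x),q)=\overline{a(x,q)}$ for $x\in\vv_q$, extracted from the proof of Lemma~\ref{l48}~(ii) and valid because $q\ne M+1$, is legitimate, as is the concluding involution step.
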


\begin{proof}
First suppose that $x \in A_q$ has a
finite greedy expansion $(b_i)$ with a last nonzero element $b_n$.
Since $q \ne M+1$, the sequence 
\begin{equation*}
(c_i):= \overline{(b_1 \cdots b_n)^- \alpha_1 \alpha_2 \cdots}
\end{equation*}
is an infinite expansion of $\ell(x)$. 
In order to show that $(c_i)$ is the greedy expansion of $\ell(x)$, we verify that $(c_i)$ satisfies the inequalities \eqref{25} of Proposition \ref{p27}.
By Proposition \ref{p31} it is suficient to show that if $1\le k<n$ and $b_k>0$, then 
\begin{equation*}
\overline{(b_{k+1} \cdots b_n)^- \alpha_1 \alpha_2 \cdots}<\alpha_1 \alpha_2 \cdots.
\end{equation*}
Since $x\in\vv_q$ and $a(x,q)=(b_1\cdots b_n)^- \alpha_1 \alpha_2 \cdots$ by Lemma \ref{l28} (ii), we have (see \eqref{46}) 
\begin{equation*}
\overline{(b_{k+1} \cdots b_n)^- \alpha_1 \alpha_2 \cdots}\le \alpha_1 \alpha_2 \cdots.
\end{equation*}
We cannot have an equality here because it would imply $\overline{\alpha_{n-k+1}\alpha_{n-k+2}\cdots}=\alpha_1 \alpha_2 \cdots$, contradicting $q\in\uuu$ (see Proposition \ref{p31}  again). It follows from the symmetry of $\uu_q$ and $\vv_q$  (see Lemma ~\ref{l48}) that $\ell(x)\in B_q$.

Conversely, suppose that $x \in B_q$ and let $(b_i)$ be its infinite greedy expansion.
Since $x\notin\uu_q$, there exists
 a \emph{smallest} positive integer $n$ for which 
\begin{equation}\label{61}
b_n >0 \qtq{and}\overline{b_{n+1}b_{n+2} \cdots}
\geq \alpha_1 \alpha_2 \cdots .
\end{equation}
By Lemma~\ref{l47}, we have necessarily 
$b_{n+1}b_{n+2}\cdots= \overline{\alpha_1\alpha_2\cdots}$. 
We claim that
\begin{equation*}
(c_i)= \overline{(b_1 \cdots b_n)^-} 0 ^{\infty}
\end{equation*}
is the greedy expansion of $\ell(x)\in A_q$.
We have to show that if $1\le k<n$ and $b_k>0$, then 
\begin{equation*}
\overline{(b_{k+1}\cdots b_n)^-}0^{\infty}
<\alpha_1\alpha_2\cdots .
\end{equation*}
Since $(\alpha_i)$ has infinitely many nonzero digits, it suffices to show that 
\begin{equation*}
\overline{b_{k+1} \cdots b_n}<\alpha_1\cdots\alpha_{n-k}.
\end{equation*}
By the minimality of $n$ we have 
\begin{equation*}
\overline{b_{k+1}b_{k+2} \cdots }
<\alpha_1\alpha_2\cdots .
\end{equation*}
Hence 
\begin{equation*}
\overline{b_{k+1} \cdots b_n}\le\alpha_1\cdots\alpha_{n-k},
\end{equation*}
and it remains to exclude the equality.
However, in case of equality we would obtain the impossible relations
\begin{equation*}
\alpha_1\alpha_2\cdots
>\overline{b_{k+1}b_{k+2} \cdots }
=\alpha_1\cdots\alpha_{n-k}\alpha_1\alpha_2\cdots
\ge \alpha_1\cdots\alpha_{n-k}\alpha_{n-k+1}\alpha_{n-k+2}\cdots. \qedhere
\end{equation*}
\end{proof}

Now we are ready for the proof of Theorem~\ref{t11}.
The following proof will also establish the stronger properties stated in Proposition \ref{p12}.

\begin{proof}[Proof of Theorem~\ref{t11}]
(i) and (ii) Suppose that $q\in\uuu\setminus\set{M+1}$. Since $\uu_q \subseteq\vv_q$ and $\vv_q$ is closed by Lemma \ref{l48}, we have $\overline{\uu_q}  \subseteq \vv_q$. 
Conversely, it follows from Lemmas \ref{l52}, \ref{l61} and the symmetry of $\uu_q$ and $\vv_q$ that  $\vv_q \setminus \uu_q \subseteq\overline{\uu_q}$, and this implies $\vv_q \subseteq\overline{\uu_q}$. We infer from Lemmas \ref{l51}, \ref{l61} that both $A_q$ and $B_q$ are dense in $\overline{\uu_q}=\vv_q$, and $\abs{A_q}=\abs{B_q}=\aleph_0$.
Therefore $\abs{\vv_q\setminus\uu_q}=\abs{A_q\cup B_q}=\aleph_0$.
If $q = M+1$, the properties stated in Theorem \ref{t11} and Proposition \ref{p12} are well known.
\medskip

(iii) and (iv) Fix $q \in \uuu$ and let $(b_i)$ be the greedy expansion of a number $x \in \vv_q\setminus \uu_q$. 
Let $n$ be the \emph{smallest} positive integer for which \eqref{61} holds and let $(d_i)$ be another expansion of $x$. 
Then $(d_i) < (b_i)$, and hence there exists a \emph{smallest} integer $j\ge1$ for which $d_j < b_j$. 
First we show that $j \geq n$. 
Assume on the contrary that $j < n$. 
Then $b_j >0$, and by minimality of $n$ we have
\begin{equation*}
b_{j+1}b_{j+2} \cdots > \overline{\alpha_1 \alpha_2 \cdots}.
\end{equation*}
From Proposition~\ref{p31} (ii) we know that $\overline{\alpha_1 \alpha_2 \cdots}$ is the greedy expansion of
$\ell(x)$, and thus
\begin{equation*}
\sum_{i=1}^{\infty} \frac{d_{j+i}}{q^i} 
=b_j - d_j + \sum_{i=1}^{\infty} \frac{b_{j+i}}{q^i} 
>1+\sum_{i=1}^{\infty} \frac{\overline{\alpha_i}}{q^i}
=1 + \frac{M}{q-1} - 1 
=  \frac{M}{q-1},
\end{equation*}
which is impossible.
If $j=n$, then $d_n=b_n^-$, for otherwise we have 
\begin{equation*}
2 \leq \sum_{i=1}^{\infty} \frac{d_{n+i}}{q^i} \leq \frac{M}{q-1},
\end{equation*}
which is also impossible. 
Indeed, the last condition would imply $q\le (M+2)/2$, while $q\in\uuu$ implies $q>\tilde q\ge (M+2)/2$ by  Theorem \ref{t33} (ii).
Now we distinguish between two cases.

If $j=n$ and
\begin{equation}\label{62}
\overline{b_{n+1} b_{n+2} \cdots} > \alpha_1 \alpha_2 \cdots,
\end{equation}
then by Lemma~\ref{l49} and (i) we have $b_{r}=0$ for all $r >n$, from which
it follows that $(d_{n+i})$ is an expansion of $1$. 
Hence, if $q \in
\uu$ and \eqref{62} holds, then the only expansion of $x$ starting with $(b_1 \cdots b_n)^-$ is given by $(c_i):=( b_1 \cdots b_n)^- \alpha_1\alpha_2 \cdots$. 
If $q \in \uuu \setminus \uu$ and
\eqref{62} holds, then any expansion $(c_i)$ starting with $(b_1 \cdots
b_n)^-$ is an expansion of $x$ if and only if $(c_{n+i})$ is one of the
expansions listed in Theorem  \ref{t32} (vi).

If $j=n$ and
\begin{equation}\label{63}
\overline{b_{n+1} b_{n+2} \cdots} = \alpha_1 \alpha_2 \cdots,
\end{equation}
then
\begin{equation*}
\sum_{i=1}^{\infty} \frac{d_{n+i}}{q^i}=
1+ \sum_{i=1}^{\infty} \frac{b_{n+i}}{q^i}  =
\sum_{i=1}^{\infty} \frac{M}{q^i}.
\end{equation*}
Hence, if \eqref{63} holds, then the only expansion of $x$ starting with
$(b_1 \cdots b_n)^-$ is given by $(b_1 \cdots b_n)^-M^{\infty}$.

Finally, if $j > n$, then \eqref{63} holds, for otherwise $b_r=0$ for all $r >n$ again, and $d_j < b_j$ is impossible.
Note that in this case $ q \notin \uu$, because otherwise $(b_{n+i})$ is
the unique expansion of $\sum_{i = 1}^{\infty} \overline{\alpha_i} q^{-i}$
and thus $(d_{n+i}) = (b_{n+i})$ which is impossible due to $j > n$.
Hence, if $q \in \uu$, then $(b_i)$ is the only expansion of $x$ starting with $b_1\cdots b_n$. 
If $q \in \uuu\setminus \uu$ and \eqref{63} holds, then any expansion $(c_i)$ starting with $b_1 \cdots b_n$ is an expansion of $x$ if and only if $(c_{n+i})$ is one of the conjugates of the expansions listed in Theorem  \ref{t32} (vi).
\end{proof}

\section{Proof of Theorems \ref{t13},  \ref{t14} and Corollaries \ref{c16}, \ref{c17}}\label{s7}

Fix $q\in(1,M+1]$ with $(\alpha_i)=\alpha(q)$. 
We recall from Proposition \ref{p27}  that a sequence $(b_i)\in A^{\NN}$ is greedy
if and only if 
\begin{equation}\label{71}
b_{n+1}b_{n+2} \cdots < \alpha_1 \alpha_2 \cdots \qtq{whenever}
n\ge 1
\qtq{and}b_1\cdots b_n\ne M^n.
\end{equation}
We also recall from Proposition \ref{p26} that
\begin{equation}\label{72}
\alpha_{n+1}\alpha_{n+2}\cdots\leq\alpha_1\alpha_2\cdots\qtq{for all}n\ge 0.
\end{equation}

\begin{lemma}\label{l71}
Suppose that $q\in(1,M+1]\setminus\uuu$.  
Then
\begin{enumerate}[\upshape (i)]
\item a greedy sequence $(b_i)$ cannot end with $\overline{(\alpha_i)}$;
\item the set $\uu_q$ is closed;
\item each element $x \in \vv_q \setminus \uu_q$ has a finite greedy expansion.
\end{enumerate}
\end{lemma}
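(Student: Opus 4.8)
The plan is to prove the three parts in the order (i), (iii), (ii), treating (i) as the engine and deriving the other two from it by short arguments. For (i), I would first turn the hypothesis $q\notin\uuu$ into a usable witness: by the lexicographic characterization in Proposition~\ref{p31}~(ii), failure of $q\in\uuu$ means there is an index $p\ge 1$ with $\alpha_p>0$ and $\overline{\alpha_{p+1}\alpha_{p+2}\cdots}\ge\alpha_1\alpha_2\cdots$ (here $p\ge1$ because that condition is only quantified over indices with $\alpha_p>0$). Now suppose, for contradiction, that a greedy sequence $(b_i)$ ends with $\overline{(\alpha_i)}$, say $b_{N+j}=\overline{\alpha_j}$ for all $j\ge1$. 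Inspecting the position $m:=N+p$, the tail satisfies $b_{m+1}b_{m+2}\cdots=\overline{\alpha_{p+1}\alpha_{p+2}\cdots}\ge\alpha_1\alpha_2\cdots$, so the greedy characterization \eqref{71} forces $b_1\cdots b_m=M^m$, and in particular $b_m=M$. But $b_m=b_{N+p}=\overline{\alpha_p}=M-\alpha_p<M$ since $\alpha_p>0$, a contradiction.

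For (iii), let $x\in\vv_q\setminus\uu_q$ and assume its greedy expansion $(b_i)=b(x,q)$ is infinite; I would contradict (i). By Lemma~\ref{l28}~(i) the quasi-greedy expansion of $x$ then coincides with $(b_i)$, so $x\in\vv_q$ together with Lemma~\ref{l47} (condition \eqref{46}) gives $\overline{b_{m+1}b_{m+2}\cdots}\le\alpha_1\alpha_2\cdots$ whenever $b_m>0$. Being greedy, $(b_i)$ already satisfies \eqref{41}; hence if it also satisfied \eqref{42} it would lie in $\uu_q'$ by Lemma~\ref{l42}, forcing $x\in\uu_q$. So \eqref{42} must fail somewhere: there is an $m$ with $b_m>0$ and $\overline{b_{m+1}b_{m+2}\cdots}\ge\alpha_1\alpha_2\cdots$. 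Combined with the reverse inequality just obtained, this yields $b_{m+1}b_{m+2}\cdots=\overline{\alpha_1\alpha_2\cdots}$, i.e. $(b_i)$ ends with $\overline{(\alpha_i)}$, contradicting (i). Therefore $(b_i)$ is finite.

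For (ii), I would show that the complement of $\uu_q$ is open. A point $x\notin\vv_q$ is separated from $\uu_q\subseteq\vv_q$ because $\vv_q$ is closed (Lemma~\ref{l48}~(iii)), and points outside $J_q$ are trivially separated. The one substantive case is $x\in\vv_q\setminus\uu_q$; then $x\neq0$ (as $0\in\uu_q$), and by (iii) the greedy expansion $(b_i)=b(x,q)$ is finite with a last nonzero digit $b_N$. Applying Lemma~\ref{l49}~(i) at $n=N$ — legitimate because $\overline{b_{N+1}b_{N+2}\cdots}=M^{\infty}>\alpha_1\alpha_2\cdots$, since $\alpha(q)$ is doubly infinite by Proposition~\ref{p26}~(iii) — produces a $z>x$ with $[x,z]\cap\uu_q=\varnothing$. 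Applying the same lemma to $\ell(x)\in\vv_q\setminus\uu_q$ and exploiting the symmetry $\ell(\uu_q)=\uu_q$ from Lemma~\ref{l48}~(ii) produces a $y<x$ with $[y,x]\cap\uu_q=\varnothing$, so $(y,z)$ is a neighborhood of $x$ missing $\uu_q$.

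The only real idea lies in (i) and in the reduction of (iii) to it: recognizing that an infinite greedy expansion of a non-univoque point of $\vv_q$ is forced to terminate in the reflected tail $\overline{(\alpha_i)}$, which (i) rules out exactly when $q\notin\uuu$. The step demanding the most care is the interplay between the \emph{strict} inequalities defining $\uu_q'$ in Lemma~\ref{l42} and the \emph{non-strict} ones defining $\vv_q$ in Lemma~\ref{l47}, since it is precisely the collapse of these two bounds into an equality that manufactures the forbidden tail. Once this is in hand, (ii) is a routine neighborhood argument resting on Lemmas~\ref{l49} and~\ref{l48}.
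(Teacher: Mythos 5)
Your proof is correct, and it uses the same engine as the paper's: part (i) from the greedy characterization \eqref{71} together with Proposition~\ref{p31}~(ii), part (iii) by playing the strict inequalities of Lemma~\ref{l42} against the non-strict ones of Lemma~\ref{l47} to manufacture the tail $\overline{(\alpha_i)}$, and part (ii) from Lemma~\ref{l49}~(i) plus the symmetry of $\uu_q$ (Lemma~\ref{l48}~(ii)). Your (i) is the contrapositive of the paper's argument (the paper shows that a greedy sequence ending with $\overline{(\alpha_i)}$ forces $\overline{(\alpha_i)}$ itself to be greedy, hence $q\in\uuu$; you instead extract a witness index $p$ from $q\notin\uuu$ and contradict \eqref{71} at position $N+p$), and your (iii) is essentially verbatim the paper's. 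The only structural difference is the order: the paper proves (ii) directly for any $x\in J_q\setminus\uu_q$, getting from Lemma~\ref{l42} an index $n$ with $b_n>0$ and $\overline{b_{n+1}b_{n+2}\cdots}\ge\alpha_1\alpha_2\cdots$ and using (i) to upgrade this to the strict inequality required by Lemma~\ref{l49}~(i); you instead prove (iii) first and then get strictness in (ii) for free because the tail of a finite expansion is $0^{\infty}$. Both routes work; the paper's keeps (ii) independent of (iii), yours makes (ii) a one-liner given (iii).

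One justification in your (ii) is faulty under this paper's conventions, though the fact it supports is true. You claim $M^{\infty}>\alpha_1\alpha_2\cdots$ ``since $\alpha(q)$ is doubly infinite by Proposition~\ref{p26}~(iii)''; but in this paper $M^{\infty}$ is itself a doubly infinite sequence, so double infiniteness does not exclude $\alpha(q)=M^{\infty}$. The repair is immediate: $q\notin\uuu$ implies $q\ne M+1$ (because $M+1\in\uu\subseteq\uuu$), and $q\mapsto\alpha(q)$ is strictly increasing by Proposition~\ref{p26}~(ii), so $\alpha(q)<\alpha(M+1)=M^{\infty}$. With that substitution your argument is complete.
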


\begin{proof}

(i) Let $q\in(1,M+1]$, and assume that there exists a greedy sequence in base $q$ ending with $\overline{\alpha_1\alpha_2 \cdots}$.
Then \eqref{71} implies that the sequence $\overline{\alpha_1\alpha_2 \cdots}$ itself is also  greedy in base $q$, and therefore
\begin{equation*}
\overline{\alpha_{n+1}\alpha_{n+2} \cdots} < \alpha_1 \alpha_2 \cdots
\qtq{whenever}\alpha_n>0.
\end{equation*}
By Proposition \ref{p31} (ii) this implies that $q\in\uuu$.
\medskip

(ii) Let
$x \in J_q \setminus \uu_q$ and denote the
greedy expansion of $x$ in base $q$ by $(b_i)$.
According to Lemma~\ref{l42}
there exists a positive integer $n$ such that
\begin{equation*}
b_n >0 \quad \mbox{and} \quad \overline{b_{n+1}b_{n+2} \cdots} \geq
\alpha_1 \alpha_2 \cdots .
\end{equation*}
Applying Lemma \ref{l49} and (i) we conclude
that
\begin{equation*}
[x,z] \cap \uu_q = \varnothing
\end{equation*}
for some number $z >x$. It follows that $\uu_q$ is closed from above.
Since the set $\uu_q$ is symmetric it is closed from below as well.
\medskip 

(iii) Assume on the contrary that $a(x,q)=b(x,q)$ for some $x \in \vv_q \setminus \uu_q$.
Then it would follow from Lemmas \ref{l42} and \ref{l47} that for some positive integer $n$,
\begin{equation*}
b_n(x,q) >0 \quad \text{and} \quad \overline{b_{n+1}(x,q) b_{n+2}(x,q) \cdots} = \alpha_1 \alpha_2 \cdots,
\end{equation*}
contradicting (i).
\end{proof}

Recall from Proposition \ref{p31} that the set $\vv$ consists
of those numbers $q >1$
for which the quasi-greedy expansion $(\alpha_i)=\alpha(q)$ satisfies
\begin{equation}\label{74}
\overline{\alpha_{n+1} \alpha_{n+2} \cdots} \leq \alpha_1 \alpha_2 \cdots
\quad \mbox{for all} \quad  n \geq 0.
\end{equation}

If $q \in \vv \setminus \uuu$, then $(\alpha_i)$ is of the form
\begin{equation}\label{75}
(\alpha_i)= (\alpha_1 \cdots \alpha_k
\overline{\alpha_1 \cdots \alpha_k})^{\infty},
\end{equation}
where $k$ is the least positive integer satisfying
\begin{equation}\label{76}
\overline{\alpha_{k+1} \alpha_{k+2} \cdots} = \alpha_1 \alpha_2 \cdots.
\end{equation}
In particular, such a sequence is periodic. Note that $\alpha_k >0$, for otherwise it
would follow from \eqref{74} with $n=k-1$ and \eqref{75} that
\begin{equation*}
M(\alpha_1\cdots \alpha_{k-1}) \le \alpha_1 \cdots \alpha_{k-1}0,
\end{equation*}
which is impossible, because $M>0$ and $\alpha_j \le \alpha_1$ for all $j \ge 1$ by Proposition \ref{p26} (ii). 
Any sequence of the form $(M^m0^m)^{\infty}$, where $m$ is a positive integer, is infinite and satisfies \eqref{72} and \eqref{74} but
not \eqref{52}. On the other hand, there are only countably many
periodic sequences. Hence the set $\vv \setminus \uuu$ is
countably infinite.

Now we are ready to prove Theorems \ref{t13} and \ref{t14}.

\begin{proof}[Proof of Theorem~\ref{t13}]
Throughout the proof $q \in \vv \setminus
\uuu$ is fixed but arbitrary, and $k$ is the least positive integer satisfying \eqref{76} with $(\alpha_i)=\alpha(q)$.

(i) These are the statements of Lemmas \ref{l48} (iii)  and \ref{l71} (ii).
\medskip

(ii) $\abs{\vv_q \setminus \uu_q}=\abs{A_q}=\aleph_0 $, by Lemmas~\ref{l51} and \ref{l71} (iii).
Lemma~\ref{l51} also implies that $A_q$ is dense in $\vv_q$.
It remains to show that all elements of $\vv_q \setminus \uu_q$ are
isolated points of $\vv_q$. 
Since the greedy expansion $b(x,q)$ of a number $x \in \vv_q \setminus \uu_q$  is finite, by Lemma \ref{l49} (i) there exists a number $z>x$ such that $(x,z] \cap \vv_q = \varnothing$.
Since the sets $\uu_q$ and $\vv_q$ are symmetric, there also
exists a number $y < x$ satisfying
\begin{equation*}
[y,x) \cap \vv_q = \varnothing.
\end{equation*}
\medskip 

(iii) and (iv) We already know from Lemma~\ref{l71} that each $x\in\vv_q\setminus\uu_q$ has a finite greedy expansion.
It remains to show that each element
$x \in \vv_q \setminus \uu_q$ has exactly $\aleph_0$ expansions.  
Let $x \in \vv_q \setminus \uu_q$ and let $b_n$ be the last nonzero element of $(b_i)=b(x,q)$. 
If $1 \le j <n$ and $b_j = a_j (x,q) >0$, then 
\begin{equation*}
\overline{a_{j+1} (x,q)\cdots a_n (x,q)}= \overline{(b_{j+1} \cdots b_n)^-} \leq
\alpha_1 \cdots \alpha_{n-j}
\end{equation*}
by Lemma ~\ref{l47}. 
Therefore
\begin{equation}\label{77}
b_{j+1} \cdots b_n > \overline{\alpha_1 \cdots \alpha_{n-j}}.
\end{equation}
Let $(d_i)$ be another expansion of $x$ and let $j$ be the \emph{smallest}
positive integer for which $d_j\not=b_j$. 
Since $(b_i)$ is greedy, we have $d_j<b_j$ and $j\in\set{1,\ldots,n}$. 
First we show that $j \in \set{n-k, n}$.
Assume on the contrary that $j \notin \set{n-k,n}$.

First assume that $n-k < j <n$. 
Then $b_j >0$ and by \eqref{77},
\begin{equation*}
b_{j+1} \cdots b_n 0^{\infty} >
\overline{\alpha_1 \cdots \alpha_{n-j} (\alpha_{n-j+1} \cdots
\alpha_{k})^-} 0^{\infty}.
\end{equation*}
Since $(\alpha_1 \cdots \alpha_{k})^- M^{\infty}$ is the smallest expansion of 1 in base $q$ (see Theorem~\ref{t33} (vi)),
$\overline{(\alpha_1 \cdots \alpha_k)^-}0^{\infty}$ is the greedy expansion of
$M/(q-1) - 1$, and thus
\begin{equation*}
\sum_{i=1}^{\infty} \frac{d_{j+i}}{q^i} =
b_j - d_j + \sum_{i=1}^{\infty} \frac{b_{j+i}}{q^i}  >  \frac{M}{q-1},
\end{equation*}
which is impossible.

Next assume that $1 \leq j < n-k$. Rewriting \eqref{77} one gets
\begin{equation*}
\overline{b_{j+1} \cdots b_n} < \alpha_1 \cdots \alpha_{n-j}.
\end{equation*}
If we had
\begin{equation*}
\overline{b_{j+1} \cdots b_{j+k}} = \alpha_1 \cdots \alpha_k,
\end{equation*}
then
\begin{equation*}
\overline{b_{j+k+1} \cdots b_n} < \alpha_{k+1} \cdots \alpha_{n-j}.
\end{equation*}
Hence
\begin{equation*}
b_{j+k+1} b_{j+k+2} \cdots > \overline{\alpha_{k+1} \alpha_{k+2} \cdots} =  \alpha_1 \alpha_2 \cdots.
\end{equation*}
Since in this case $b_{j+k} = \overline{\alpha_k} < M$, the last
inequality contradicts the fact that $(b_i)$ is a greedy sequence.
Therefore
\begin{equation*}
\overline{b_{j+1} \cdots b_{j+k}} < \alpha_1 \cdots \alpha_k
\end{equation*}
or equivalently
\begin{equation*}
b_{j+1} \cdots b_{j+k} \geq \overline{(\alpha_1 \cdots \alpha_k)^-}.
\end{equation*}
Since $n > j+k$ and $b_n >0$, it follows that
\begin{equation*}
b_{j+1} b_{j+2} \cdots > \overline{(\alpha_1 \cdots \alpha_k)^-}0^{\infty},
\end{equation*}
which leads to the same contradiction as we encountered in the case $n-k<j<n$.
It remains to investigate what happens if $j \in \set{n-k,n}$.

If $j=n-k$, then it follows from \eqref{77} that
\begin{equation*}
b_{n-k+1} \cdots b_n \geq \overline{(\alpha_1 \cdots \alpha_k)^-}.
\end{equation*}
Equivalently,
\begin{equation*}
b_{n-k+1} b_{n-k+2} \cdots =
b_{n-k+1} \cdots b_n 0^{\infty} \geq
\overline{(\alpha_1 \cdots \alpha_k)^-}0^{\infty},
\end{equation*}
and thus
\begin{equation}\label{78}
\sum_{i=1}^{\infty} \frac{d_{n-k+i}}{q^i} \geq
1+ \sum_{i=1}^{\infty} \frac{b_{n-k+i}}{q^i}  \geq \frac{M}{q-1},
\end{equation}
where both inequalities in \eqref{78} are equalities if and only if
\begin{equation*}
d_{n-k}=b_{n-k}^-,\quad 
b_{n-k+1}\cdots b_n=\overline{(\alpha_1 \cdots \alpha_k)^-}\qtq{and}
d_{n-k+1}d_{n-k+2} \cdots = M^{\infty}.
\end{equation*}
Hence $d_{n-k} < b_{n-k}$ is only possible in case $b_{n-k} >0$ and
$b_{n-k+1} \cdots b_n = \overline{(\alpha_1 \cdots \alpha_k)^-}$.

If $j=n$ and $d_n=b_n^-$, then $(d_{n+i})$ is one of the expansions of $1$ in base $q$ listed in Theorem~\ref{t33} (vi).
It follows from Theorem~\ref{t33} (ii)  that $\tilde q \ge (M+2)/2$ and that $\tilde q = (M+2)/2$ if and only if $M$ is even. Since $q \ge \tilde q$, we have $M/(q-1) \le 2$ and $M/(q-1) =2$ if and only if $q = \tilde q$ and $M$ is even. Hence, if $M$ is even, $q = \tilde q$ and $b_n \ge 2$, the number $x$ has one more expansion, namely $(b_1 \cdots b_{n-1})(b_n-2) M^{\infty}$.
\end{proof}

\begin{proof}[Proof of Theorem~\ref{t14}]
Fix $q \in (1,M+1]\setminus \vv$. 
Since $\uu_q \subseteq \overline{\uu_q}  \subseteq\vv_q$ and every $x\in\vv_q\setminus\uu_q$ has a finite greedy expansion by Lemma~\ref{l71} (iii), the required relation $\uu_q=\overline{\uu_q}=\vv_q$ will follow if we show that a number $x \in J_q$ with a finite greedy expansion does \emph{not} belong to $\vv_q$.

Let $x \in J_q$ be an element with a finite greedy expansion. 
Since $q \notin \vv$, 
by Proposition \ref{p31} (iii) there exists a positive integer $n$ such that
\begin{equation*}
\alpha_n > 0
\qtq{and}
\overline{\alpha_{n+1} \alpha_{n+2} \cdots} > \alpha_1 \alpha_2 \cdots .
\end{equation*}
Since $a(x,q)$ ends with $\alpha_1 \alpha_2 \cdots$,
$x \notin \vv_q$ by Lemma~\ref{l47}. 
\end{proof}

We recall that in Sections \ref{s4}--\ref{s7} we have used Definition \ref{d43} of $\vv_q$ and $\vv_q'$ for $q\in(1,M+1]$.
Part (iii) of our next theorem shows that this definition is equivalent to the earlier one given in the introduction:  

\begin{theorem}\label{t72}\mbox{}
For $(x,q) \in \bfj$ we have the following equivalences: 

\begin{enumerate}[\upshape (i)]
\item $x\in\uu_q$ if and only if $x$ has a unique expansion.
\item $x\in\overline{\uu_q}$ if and only if  at least one of $x$ and $M/(q-1)-x$ has a unique infinite expansion.
\item $x\in\vv_q$ if and only if $x$ has at most one doubly infinite expansion.
\end{enumerate}
\end{theorem}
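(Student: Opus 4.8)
The plan is to handle the three parts in order of increasing difficulty, and throughout to route every argument through the quasi-greedy and lazy expansions, so as never to compare two arbitrary sequences both lexicographically and by value at once — a comparison that is unreliable once $q\le M+1$. Part (i) is essentially a restatement of earlier work: by construction $\uu_q=\pi_q(\uu_q')$, and Lemma \ref{l42} identifies $\uu_q'$ with the set of unique expansions, so $x\in\uu_q$ if and only if $x$ has a unique expansion.

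For Part (iii) I would first fix $q<M+1$; the case $q=M+1$ is immediate, since there the numbers with two expansions are exactly the terminating ones, whose finite and co-finite expansions are neither doubly infinite, so every $x\in J_q=\vv_q$ has at most one doubly infinite expansion. The decisive observation for $q<M+1$ is that the (nonempty) set of doubly infinite expansions of $x$ has both a largest and a smallest element: its largest element is $a(x,q)$, the largest infinite expansion, which is doubly infinite by Proposition \ref{p22} (ii); and its smallest element is $\overline{a(\ell(x),q)}$, the conjugate of the largest infinite expansion of $\ell(x)$, i.e. the smallest co-infinite expansion of $x$, again doubly infinite. Hence $x$ has at most one doubly infinite expansion exactly when these coincide, that is, when $\overline{a(x,q)}=a(\ell(x),q)$. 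Since $\overline{a(x,q)}$ is an infinite expansion of $\ell(x)$, Proposition \ref{p27} (ii) tells us it equals the quasi-greedy expansion $a(\ell(x),q)$ precisely when it satisfies the inequalities \eqref{26}; writing these out digit by digit, they are exactly the inequalities \eqref{46}, which by Lemma \ref{l47} characterize $x\in\vv_q$. This yields (iii).

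For Part (ii) I introduce the set $\mathcal W_q$ of numbers having a unique infinite expansion and aim to prove $\overline{\uu_q}=\mathcal W_q\cup\ell(\mathcal W_q)$. Two inclusions hold for all $q$. If $x\in\uu_q$, its unique expansion is infinite, so $x\in\mathcal W_q$, giving $\uu_q\subseteq\mathcal W_q\cup\ell(\mathcal W_q)$. Conversely, for $q<M+1$ the computation of (iii) shows that $x\notin\vv_q$ forces $\overline{a(x,q)}<a(\ell(x),q)$, so $\overline{a(x,q)}$ is a second infinite expansion of $\ell(x)$ and, symmetrically, $x$ too has two infinite expansions; thus $x\notin\vv_q$ implies $x,\ell(x)\notin\mathcal W_q$, whence $\mathcal W_q\cup\ell(\mathcal W_q)\subseteq\vv_q$ (trivially so for $q=M+1$). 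It then remains to pin $\overline{\uu_q}$ between these bounds in the three regimes. If $q\in(1,M+1]\setminus\vv$, then $\uu_q=\vv_q$ by Theorem \ref{t14} and the bounds squeeze to $\mathcal W_q\cup\ell(\mathcal W_q)=\uu_q=\overline{\uu_q}$. If $q\in\vv\setminus\uuu$, then $\uu_q$ is closed by Theorem \ref{t13} (i), and each $x\in\vv_q\setminus\uu_q$ has a finite greedy expansion and, since $1$ has a co-finite expansion by Theorem \ref{t33} (vi), also a co-finite — hence infinite — expansion distinct from the doubly infinite $a(x,q)$; the same holds for $\ell(x)\in\vv_q\setminus\uu_q$, so $\vv_q\setminus\uu_q$ meets neither $\mathcal W_q$ nor $\ell(\mathcal W_q)$ and again $\mathcal W_q\cup\ell(\mathcal W_q)=\uu_q=\overline{\uu_q}$. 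Finally, if $q\in\uuu$, then $\overline{\uu_q}=\vv_q$ by Theorem \ref{t11} (i); using Proposition \ref{p12} together with the lists of expansions of $1$ in Theorem \ref{t32} (vi), every $x\in A_q$ has $a(x,q)$ as its only infinite expansion, so $A_q\subseteq\mathcal W_q$, and every $x\in B_q$ satisfies $\ell(x)\in A_q\subseteq\mathcal W_q$, so $B_q\subseteq\ell(\mathcal W_q)$; with $\uu_q\subseteq\mathcal W_q$ this gives $\vv_q\subseteq\mathcal W_q\cup\ell(\mathcal W_q)$, and equality follows.

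I expect the main obstacle to be exactly the reason for this indirect routing: for $q\le M+1$ the lexicographic and numerical orders on $A^{\NN}$ genuinely diverge on non-greedy sequences, so one cannot read the number of (doubly) infinite expansions of $x$ directly off the lexicographic conditions \eqref{43}--\eqref{46} defining $\vv_q$. Expressing the extreme (doubly) infinite expansions as quasi-greedy or lazy expansions — whose lexicographic descriptions in Proposition \ref{p27} do encode the numerical value faithfully — is what converts the lexicographic definitions into exact statements about multiplicities of expansions. The remaining work is careful bookkeeping of the finite/co-finite/doubly infinite trichotomy under the reflection $\ell$ and conjugation, together with the separate, but easy, treatment of the endpoint $q=M+1$.
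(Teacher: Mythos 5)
Your proposal is correct, and its proof of part (iii) --- the heart of the theorem --- takes a genuinely different route from the paper's. The paper proves (iii) in two separate halves: the forward implication by inspecting the explicit lists of expansions of the points of $\vv_q\setminus\uu_q$ (Proposition \ref{p12}, Theorem \ref{t13} (iv)) together with Theorem \ref{t14}, and the converse by splitting on whether $a(x,q)$ is also the lazy expansion of $x$; when it is not, the paper constructs the doubly infinite expansion $(c_1\cdots c_k)^+\overline{\alpha_1\alpha_2\cdots}$ from the co-finite lazy expansion, identifies it with $a(x,q)$, and then verifies \eqref{46} digit by digit in three subcases. Your single observation --- that for $q<M+1$ the nonempty set of doubly infinite expansions of $x$ has lexicographic maximum $a(x,q)$ and minimum $\overline{a(\ell(x),q)}$, so that uniqueness is precisely the equality $\overline{a(x,q)}=a(\ell(x),q)$ --- lets Proposition \ref{p27} (ii), applied to the infinite expansion $\overline{a(x,q)}$ of $\ell(x)$, convert both implications simultaneously into condition \eqref{46}, hence into $x\in\vv_q$ by Lemma \ref{l47}. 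This is shorter, symmetric in the two directions, and makes part (iii) independent of the classification machinery of Sections \ref{s6} and \ref{s7}, which you (like the paper) then need only for part (ii). For part (ii) the two proofs are essentially the same in substance --- both run over the trichotomy $q\notin\vv$, $q\in\vv\setminus\uuu$, $q\in\uuu$, quoting Theorems \ref{t14}, \ref{t13}, and Theorem \ref{t11} with Proposition \ref{p12}, respectively --- but your sandwich $\uu_q\subseteq\mathcal{W}_q\cup\ell(\mathcal{W}_q)\subseteq\vv_q$, with $\mathcal{W}_q$ the set of points having a unique infinite expansion, is a tidy organizing device, whereas the paper's appeal to Lemma \ref{l61} yields the slightly sharper statement that for $q\in\uuu$ \emph{exactly} one of $x$ and $\ell(x)$ has a unique infinite expansion when $x\in\overline{\uu_q}\setminus\uu_q$. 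The steps you leave implicit (that a point whose greedy expansion is $b_1\cdots b_n0^{\infty}$ with $b_n>0$ admits the expansions $(b_1\cdots b_n)^-d_{n+1}d_{n+2}\cdots$ for every expansion $(d_{n+i})$ of $1$, and that $\uu_q$ and $\vv_q$ are symmetric) are exactly Theorem \ref{t13} (iv) and Lemma \ref{l48} (ii), so no genuine gap remains.
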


\begin{proof}
(i) This is the definition of $\uu_q$.
\medskip 

(iii) 
If $q=M+1$, then $\vv_q=J_q=[0,1]$ and each number has a unique infinite expansion, hence at most one doubly infinite expansion.
Henceforth we assume that $q\in(1,M+1)$.
Suppose that $x \in \vv_q$. If $x\in\vv_q \setminus \uu_q$ and $q \in \vv$, then by checking the list of expansions of $x$ in  Proposition \ref{p12} (iii) and (iv) and Theorem \ref{t13} (iv) we see that $x$ has precisely  one doubly infinite expansion.
In all other cases we have $x\in\uu_q$ by Theorem \ref{t14}, so that $x$ has a unique expansion.

Conversely, assume that $x\in J_q$ has at most one doubly infinite expansion.
Since $q<M+1$, by Proposition \ref{p22} (ii) the quasi-greedy expansion $(a_i):=a(x,q)$ is doubly infinite, so that it is the unique doubly infinite expansion of $x$.

If $(a_i)$ is also the lazy expansion of $x$, then its reflection is a greedy sequence, so that in particular 
\begin{equation}\label{79}
\overline{a_{n+1}a_{n+2}\cdots}
\le \alpha_1\alpha_2\cdots\qtq{whenever} a_n>0
\end{equation}
by Proposition \ref{p27} (i), where $(\alpha_i)=\alpha(q)$. Hence  $x\in\vv_q$ by Definition \ref{d43}.

Otherwise, the lazy expansion $(c_i)$ of $x$ is not doubly infinite, and, since it is infinite by Lemma \ref{l25}, it has a last digit $c_k<M$. 
Then
\begin{equation*}
(c_1\cdots c_k)^+\overline{\alpha_1\alpha_2\ldots}
\end{equation*}
is also an expansion of $x$. Since this expansion ends with the reflection of $(\alpha_i)$, it is doubly infinite by Proposition \ref{p22} (ii). 
By our hypothesis it coincides with  $(a_i)$:
\begin{equation}\label{710}
(a_i)=(c_1\cdots c_k)^+\overline{\alpha_1\alpha_2\ldots}.
\end{equation}
(Incidentally, \eqref{710} implies by Proposition~ \ref{p27} (ii) and Proposition \ref{p31} (iii) that $q\in\vv$, but we do not need this in the sequel.)
In view of Lemma~\ref{l47} it remains to check the  condition \eqref{79}.
Consider an index $n$ such that $a_n>0$.

If $n<k$, then $c_n=a_n>0$, and therefore $(\overline{c_{n+i}})<(\alpha_i)$ because  $(c_i)$ is lazy. 
Hence $\overline{c_{n+1}\cdots c_k}\le\alpha_1\ldots\alpha_{k-n}$, and using \eqref{710} the condition \eqref{79} follows:
\begin{equation*}
\overline{a_{n+1} \ldots a_{k}}
=\overline{(c_{n+1}\cdots c_k)^+}
<\overline{c_{n+1}\cdots c_k}
\le \alpha_1 \cdots \alpha_{k-n}.
\end{equation*}

The case $n=k$ is obvious because then $(\overline{a_{n+i}})=(\alpha_i)$.
Finally, in case $n>k$ we  argue as follows:
\begin{equation*}
a_n>0
\Longrightarrow \overline{\alpha_{n-k}}>0
\Longrightarrow \alpha_{n-k}<M
\Longrightarrow (\alpha_{n-k+i})\le (\alpha_i)
\Longrightarrow (\overline{a_{n+i}})\le (\alpha_i).
\end{equation*}
\medskip

(ii) For $q= M+1$ the equivalence follows by observing that every $x \in \uuuq=J_q=[0,1]$ has a unique infinite expansion.
Henceforth we assume that $q\in(1,M+1)$.

If $x\in\uu_q$, then $x$ has a unique expansion which must be infinite.
If $x\in\uuuq\setminus\uu_q$, then $q \in \uuu$, and the lists in  Proposition \ref{p12} (iii) and (iv) show again that $x$ has a unique  infinite expansion if and only if $x \in A_q$. By Lemma~\ref{l61} exactly one of the numbers  $x$ and $M/(q-1)-x$ has a unique infinite expansion if $x \in \overline{\uu_q} \setminus \uu_q$.   

Conversely, assume that either $x$ or $M/(q-1)-x$ has a unique infinite expansion in base $q$ (or both). 
Then $x$ also has a unique doubly infinite expansion and therefore $x\in\vv_q$ by the already proved Part (iii) of the present theorem. 
If $q\notin\vv\setminus\uuu$, then we conclude by noting that $\overline{\uu_q}=\vv_q$ in this case. 
If $q\in\vv\setminus\uuu$, then $x\notin\vv_q\setminus\uu_q$.
Indeed, if $q\in\vv\setminus\uuu$, then for every $x \in\vv_q\setminus\uu_q$ both $x$ and $M/(q-1)-x$ have infinitely many infinite expansions by Theorem \ref{t13} (iv), contradicting our assumption. 
Therefore  $x\in\uu_q=\overline{\uu_q}$.
\end{proof}

\begin{proof}[Proof of Corollary \ref{c16}]
If $q\in\uuu$, then $\uu_q$ is not closed by Theorem \ref{t11} (i), (ii).
If $q\in(1,M+1] \setminus\uuu$, then $\uu_q$ is closed by Theorems \ref{t13} (i) and \ref{t14}.
\end{proof}

\begin{proof}[Proof of Corollary \ref{c17}]
The relation 
$q\in\uu\Longleftrightarrow 1\in\uu_q$ is evident and the relation 
$q\in\vv\Longleftrightarrow 1\in\vv_q$ follows from Proposition~\ref{p31} (iii) and Lemma~\ref{l47}. 
It remains to prove the relation $q\in\uuu\Longleftrightarrow 1\in\uuuq$. 
If $q\in\uuu$, then 
\begin{equation*}
\alpha_n>0\Longrightarrow (\overline{\alpha_{n+i}})<(\alpha_i)
\end{equation*}
by Proposition \ref{p31} (ii).  
In particular, we have
\begin{equation*}
\alpha_n>0\Longrightarrow (\overline{\alpha_{n+i}})\le (\alpha_i),
\end{equation*}
so that $1\in\vv_q$ by Lemma~\ref{l47}. 
We conclude by recalling from Theorem \ref{t11} (i) that $\vv_q=\uuuq$.
If $q\in (1,M+1]\setminus\uuu$, then $q \notin \uu$, hence $1 \notin \uu_q$ and thus $1 \notin \overline{\uu_q}$ because $\uu_q$ is closed by Lemma~\ref{l71} (ii). 
\end{proof}

\section{Proof of Theorems~\ref{t18}, \ref{t19}, \ref{t111} and \ref{t112}}\label{s8}

We recall from Lemma~\ref{l44} the following inclusions:
\begin{equation}\label{81}
\uu_p'  \subseteq \uu_q' ,\quad
\vv_p'  \subseteq \vv_q' 
\qtq{and} 
\vv_p'  \subseteq \uu_q'
\qtq{for all} 1< p < q \le M+1.
\end{equation}
In the following lemma we exhibit some cases where these inclusions are not strict. For convenience, we define $\vv_1':=\set{0^{\infty},M^{\infty}}$. 

\begin{lemma}\label{l81}
If $(q_1,q_2)$ is a connected component of $(1, M+1] \setminus \vv$, then 
\begin{align*}
&\uu_q'=\vv_{q_1}'\qtq{for all}q\in(q_1,q_2],
\intertext{and}
&\vv_q'=\uu_{q_2}' \qtq{for all}q\in [q_1,q_2).
\end{align*}
\end{lemma}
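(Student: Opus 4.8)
The plan is to reduce both asserted equalities to the single inclusion $\uu_{q_2}'\subseteq\vv_{q_1}'$, after which everything follows from the monotonicity inclusions of Lemma~\ref{l44}. Write $\vv_1':=\set{0^{\infty},M^{\infty}}$ as in the statement. For $q\in(q_1,q_2]$, Lemma~\ref{l44} gives $\vv_{q_1}'\subseteq\uu_q'$ (third inclusion, with $p=q_1<q$) and $\uu_q'\subseteq\uu_{q_2}'$ (monotonicity, $q\le q_2$), so together with the key inclusion we obtain $\vv_{q_1}'\subseteq\uu_q'\subseteq\uu_{q_2}'\subseteq\vv_{q_1}'$ and hence $\uu_q'=\vv_{q_1}'$. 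Similarly, for $q\in[q_1,q_2)$ Lemma~\ref{l44} gives $\vv_{q_1}'\subseteq\vv_q'$ and $\vv_q'\subseteq\uu_{q_2}'$, so $\vv_{q_1}'\subseteq\vv_q'\subseteq\uu_{q_2}'\subseteq\vv_{q_1}'$ and hence $\vv_q'=\uu_{q_2}'$. When $q_1=1$ Lemma~\ref{l44} is unavailable at the left endpoint, but then $q_2=\tilde q$ and both equalities follow at once from Example~\ref{e45}~(i), which gives $\uu_{\tilde q}'=\vv_q'=\set{0^{\infty},M^{\infty}}=\vv_1'$ on the relevant ranges.

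Assume therefore $q_1\neq1$. The first step is to pin down the two quasi-greedy expansions of $1$. By Theorem~\ref{t33}~(iii) the endpoints $q_1,q_2$ are consecutive elements of $\vv$ inside a component $(q_0,q_0^*)$ of $(1,M+1]\setminus\uuu$, so by Theorem~\ref{t33}~(iv) there is a word $s$ (one of the $s_n$, $n\ge0$) ending in a nonzero digit with $\beta(q_1)=s0^{\infty}$ and $\beta(q_2)=s\overline{s^-}0^{\infty}$. Applying Lemma~\ref{l28}~(iv) to these finite greedy expansions and using the identity $(\overline{s^-})^-=\overline{s}$, I obtain
\begin{equation*}
\alpha(q_1)=(w^-)^{\infty}\qtq{and}\alpha(q_2)=(w\overline{w})^{\infty},\qquad w:=s,\quad k:=\abs{w}.
\end{equation*}

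The core of the proof is then purely combinatorial. Let $(c_i)\in\uu_{q_2}'$. Since the univoque conditions \eqref{41}--\eqref{42} and the conditions \eqref{43}--\eqref{44} defining $\vv_{q_1}'$ are each invariant under the conjugation $c\mapsto\overline{c}$, it suffices to verify \eqref{43}, that is, $c_{n+1}c_{n+2}\cdots\le(w^-)^{\infty}$ whenever $c_n<M$. Fix such an $n$ and suppose, for contradiction, that $t:=c_{n+1}c_{n+2}\cdots>(w^-)^{\infty}$; note \eqref{41} already gives $t<(w\overline{w})^{\infty}$. The bounding sequences $(w^-)^{\infty}$ and $(w\overline{w})^{\infty}$ coincide in positions $1,\ldots,k-1$ and differ only in position $k$ (where they carry $w_k-1$ and $w_k$), so $t_1\cdots t_{k-1}=w_1\cdots w_{k-1}$ and $t_k\in\set{w_k-1,w_k}$. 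If $t_k=w_k$, then $c_{n+1}\cdots c_{n+k}=w$ ends in $w_k>0$, and \eqref{42} at index $n+k$ yields $\overline{t_{k+1}t_{k+2}\cdots}<(w\overline{w})^{\infty}$; but $t<(w\overline{w})^{\infty}$ together with $t_1\cdots t_k=w$ forces $t_{k+1}t_{k+2}\cdots<\overline{w}(w\overline{w})^{\infty}=(\overline{w}w)^{\infty}$, i.e. $\overline{t_{k+1}t_{k+2}\cdots}>(w\overline{w})^{\infty}$, a contradiction. Hence $t_k=w_k-1$, so $t$ begins with $w^-$, whose last digit $w_k-1<M$; then \eqref{41} at index $n+k$ gives $t_{k+1}t_{k+2}\cdots<(w\overline{w})^{\infty}$, while $t>(w^-)^{\infty}$ forces $t_{k+1}t_{k+2}\cdots>(w^-)^{\infty}$. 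The shifted tail thus lies in the same open interval as $t$, and iterating shows that $t$ begins with $(w^-)^{j}$ for every $j\ge1$, whence $t=(w^-)^{\infty}$ — contradicting $t>(w^-)^{\infty}$.

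I expect the main obstacle to be precisely this combinatorial step: setting up the dichotomy $t_k\in\set{w_k-1,w_k}$, eliminating the case $t_k=w_k$ through the reflected univoque inequality \eqref{42}, and organising the remaining case as an infinite descent that collapses $t$ onto $(w^-)^{\infty}$. The two supporting points that also require care are the bookkeeping identity $(\overline{s^-})^-=\overline{s}$ that produces the clean normal form $\alpha(q_2)=(w\overline{w})^{\infty}$, and the verification that conjugation symmetry genuinely reduces the pair \eqref{43}--\eqref{44} to the single inequality \eqref{43}.
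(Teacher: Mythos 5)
Your proposal is correct and follows essentially the same route as the paper's proof: reduce both equalities to the single inclusion $\uu_{q_2}'\subseteq\vv_{q_1}'$ via Lemma~\ref{l44}, dispose of the component $(1,\tilde q)$ by Examples~\ref{e45}~(i), identify $\alpha(q_2)=(w\overline{w})^{\infty}$ and $\alpha(q_1)=(w^-)^{\infty}$, and run the same block-of-length-$k$ lexicographic argument, where your elimination of the case $t_k=w_k$ via the reflected univoque condition is exactly the paper's contradiction and your infinite descent is the paper's induction rephrased, with conjugation symmetry finishing the job. The only cosmetic differences are that the paper organizes the block step as a direct induction (each block $\le w^-$, with the last digit $<M$ in case of equality) and then passes through Proposition~\ref{p27}~(ii) and Lemma~\ref{l47} rather than verifying conditions \eqref{43}--\eqref{44} of Definition~\ref{d43} directly, and that one should also note (as the paper does after Lemma~\ref{l42}) that univoque sequences are automatically infinite, as Definition~\ref{d43} requires.
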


\begin{proof} 
The case $(q_1,q_2)=(1,\tilde q)$ follows from Examples \ref{e45} (i).
Henceforth we assume that $q_1\in\vv\setminus\uu$; see Theorem \ref{t33} (iii). 
Let us write 
\begin{equation*}
\alpha(q_2)=(\alpha_1 \cdots \alpha_k
\overline{\alpha_1 \cdots \alpha_k})^{\infty}
\end{equation*}
where $k$ is chosen to be minimal; then $\alpha_k>0$ by Theorem \ref{t33} (v).
Due to \eqref{81}, it is sufficient to show that $\uu_{q_2}'  \subseteq \vv_{q_1}'$. 
Suppose that a sequence $(c_i) \in A^{\NN}$
is univoque in base $q_2$, i.e.,
\begin{equation}\label{82}
c_{n+1} c_{n+2} \cdots < (\alpha_1 \cdots \alpha_k
 \overline{\alpha_1 \cdots \alpha_k})^{\infty} \quad \mbox{whenever} \quad
c_n < M
\end{equation}
and
\begin{equation}\label{83}
\overline{c_{n+1} c_{n+2} \cdots} < (\alpha_1 \cdots \alpha_k
 \overline{\alpha_1 \cdots \alpha_k})^{\infty} \quad \mbox{whenever} \quad   c_n >0.
\end{equation}
If $c_n < M$, then by \eqref{82},
\begin{equation*}
c_{n+1} \cdots c_{n+k} \leq \alpha_1 \cdots \alpha_k.
\end{equation*}
If we had
\begin{equation*}
c_{n+1} \cdots c_{n+k} =  \alpha_1 \cdots \alpha_k,
\end{equation*}
then
\begin{equation*}
c_{n+k+1} c_{n+k+2} \cdots < (\overline{\alpha_1 \cdots \alpha_k}
 \alpha_1 \cdots \alpha_k)^{\infty},
\end{equation*}
and by \eqref{83} (note that in this case $c_{n+k}=\alpha_k >0$),
\begin{equation*}
c_{n+k+1} c_{n+k+2} \cdots >(\overline{\alpha_1 \cdots \alpha_k}
 \alpha_1 \cdots \alpha_k)^{\infty},
\end{equation*}
a contradiction. Hence
\begin{equation*}
c_{n+1} \cdots c_{n+k} \leq (\alpha_1 \cdots \alpha_k)^-.
\end{equation*}
Note that $c_{n+k} < M$ in case of equality.
It follows by induction that
\begin{equation*}
c_{n+1} c_{n+2} \cdots \leq ((\alpha_1 \cdots \alpha_k)^-)^{\infty}.
\end{equation*}
Since a sequence $(c_i)$ satisfying \eqref{82} and \eqref{83} is
infinite, we conclude from
Proposition~\ref{p27} (ii) and Theorem~\ref{t33} (iv) and (v) that  $(c_i)$ is the
quasi-greedy expansion of some $x$ in base $q_1$. Repeating the
above argument for the sequence $\overline{c_1 c_2 \cdots}$, which
is also univoque in base $q_2$, we conclude from Lemma \ref{l47} that $(c_i) \in
\vv_{q_1}'$.  Hence $\uu_{q_2}'  \subseteq  \vv_{q_1}'$. 
\end{proof}

\begin{proof}[Proof of Theorem \ref{t18}]
It follows from Theorems \ref{t11} and \ref{t13} and the inclusions \eqref{81} that 
\begin{equation*}
\vv_s'  \subseteq \uu_q'   \subsetneq \vv_q'  \subseteq \uu_r'
\qtq{if} q \in \vv \qtq{and} 1< s< q < r \le M+1.
\end{equation*}
Hence the stability intervals $(q_1,q_2]$ of Lemma \ref{l81} for $\uu_q$ and the stability intervals $(1, \tilde q)$ and $[q_1,q_2)$ with $q_1 \in \vv \setminus \uu$ for $\vv_q$, are maximal.
By Theorem~\ref{t33} (iii), these stability intervals for $\uu_q$ and $\vv_q$ cover $(1,M+1] \setminus \uuu$ and $(1,M+1] \setminus \uu$, respectively. 
We conclude the proof by recalling that $\uuu$ has no interior points, and therefore $\uu$ and $\uuu$ do not contain any non-degenerate interval.
\end{proof}

\begin{proof}[Proof of Theorem \ref{t19}]
(i) This is immediate from Theorem~\ref{t11} (i) and Proposition~\ref{p12} (i).
\medskip 

(ii) Since $\vv_q$ is a closed set that contains the endpoints of $J_q$, the components of $J_q\setminus\vv_q$ are open intervals $(x_L,x_R)$ indeed, and their endpoints belong to $\vv_q$.
By Lemmas \ref{l48} (ii) and  \ref{l51} the elements of $\uu_q$ cannot be endpoints, hence the endpoints belong to $\vv_q\setminus\uu_q=A_q\cup B_q$.
Note that $\abs{A_q}=\abs{B_q}=\aleph_0$ by Theorem \ref{t11} (ii), and Proposition \ref{p12} (ii). 

If $x\in A_q$, then $x$ is a right isolated point of $\vv_q$ by Lemma \ref{l49} (i), and a left accumulation point of $\vv_q$ by Lemma \ref{l52}, so that $x$ is a left endpoint $x_L$ but not a right endpoint $x_R$. 
Applying Lemma \ref{l61}, we infer that every $x\in B_q$ is a right endpoint $x_R$ but not a left endpoint $x_L$.

It remains to show that if $b(x_L,q)=b_1 \cdots b_n 0^{\infty}$ with $b_n>0$, then $b(x_R,q)=b_1 \cdots b_n \overline{\alpha(q)}$. 
First we show that $(d_i):=b_1\cdots b_n\overline{\alpha(q)}$ is a greedy sequence that belongs to $\vv_q'\setminus\uu_q'$. 
Since the sequence ends with $b_n\overline{\alpha(q)}$ and $b_n>0$, it does not belong to $\uu_q'$ by Lemma \ref{l42}.
Since $(d_i)$ is infinite, it remains to verify that $d_k< M \Longrightarrow (d_{k+i})< \alpha(q)$ and $d_k>0 \Longrightarrow (\overline{d_{k+i}}) \le \alpha(q)$. These implications hold true if $k \ge n$ because $q \in \uuu$. Suppose that $1 \le k < n$.  
Assume first that $d_k=b_k<M$.
Since $b(x_L,q)=b_1\cdots b_n0^{\infty}$ is greedy, we have $b_{k+1} \cdots b_n0^{\infty}<\alpha(q)$, and hence $b_{k+1} \cdots b_n \le \alpha_1\cdots\alpha_{n-k}$.
It remains to observe that $\overline{\alpha(q)} < (\alpha_{n-k+i})$ by Proposition \ref{p31} (ii).
Now assume that $d_k=b_k>0$.
Since $x_L\in\vv_q$ and $a(x_L,q)=(b_1\cdots b_n)^-\alpha(q)$, we have $\overline{(b_{k+1}\cdots b_n)^-\alpha(q)}\le\alpha(q)$ by Lemma \ref{l47}.
This implies in particular the required relation $\overline{b_{k+1}\cdots b_n}\alpha(q)\le\alpha(q)$. 

Since $b_1\cdots b_n\overline{\alpha(q)}\in\vv_q'\setminus\uu_q'$ and $a_1(x_R,q)\cdots a_n(x_R,q)\ge b_1 \cdots b_n$, it remains to observe that if $(c_i)\in\vv_q'$ starts with $b_1\cdots b_n$, then  $(c_i)\ge b_1\cdots b_n\overline{\alpha(q)}$, which follows from \eqref{46} and the fact that $b_n>0$.
\medskip

(iii) Since $q\notin\uuu$, $\uu_q$ is closed and contains the endpoints of $J_q$, so that $J_q\setminus\uu_q$ is the union of  disjoint open intervals $(x_L,x_R)$.
Since $q\in\vv\setminus\uuu$, $\vv_q\setminus\uu_q$ is a discrete set by Theorem \ref{t13} (ii). 
The relation \eqref{12} follows from Lemmas \ref{l48} and \ref{51}. The relation \eqref{subsequent} is the same as the relation between $b(x_L,q)$ and $b(x_R,q)$ in (ii) and can also be proved along the exact same lines, except that we now have to invoke Proposition~\ref{p31} (iii) in place of Proposition~\ref{p31} (ii).  
\medskip 

(iv) This follows from Examples \ref{e45} (i).
\medskip 

(v) We already know from Lemma~\ref{l81} that $\uu_q' = \vv_{q_1}'$. Write the set $J_q \setminus \uu_q = \cup^*(x_L,x_R)$ as a disjoint union of open intervals $(x_L, x_R)$, and define the map $h : J_q \to J_{q_1}$ as follows: 
\begin{equation*}
\begin{cases}
h(x)=\sum_{i=1}^{\infty} c_i q_1^{-i} &\text{ if } x= \sum_{i=1}^{\infty} c_i q^{-i} \in \uu_q,\\
h(x)=\frac{x-x_L}{x_R-x_L} (h(x_R)-h(x_L)) + h(x_L) &\text{ if }  x\in (x_L,x_R). \\
\end{cases}
\end{equation*}
The map $h$ is strictly increasing by Proposition ~\ref{p27}. It is clear that $h$ restricted to each closed interval $[x_L,x_R]$ is continuous. It remains to observe that $h$ cannot have a jump discontinuity at an accumulation point of $\uu_q$ by Lemmas ~\ref{l29} and ~\ref{l210}. Hence $h$ is a strictly increasing bijection (and therefore a homeomorphism) that maps  $J_q \setminus \uu_q$ onto $J_{q_1} \setminus \vv_{q_1}$. 
\end{proof}

\begin{lemma}\label{l82}
Let $q \in (1,M+1]$. The set $\vv_q'$ is a compact subset of $A^{\NN}$ if and only if $q \not= M+1$.
\end{lemma}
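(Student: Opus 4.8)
The plan is to reduce the statement to a closedness question and then treat the two cases separately. Since the alphabet $A$ is finite, $A^{\NN}$ with the Tychonov product topology is compact (by Tychonov's theorem) and metrizable (via the metric $d$ of Examples~\ref{e45}~(iv)); in such a space a subset is compact if and only if it is closed. It therefore suffices to prove that $\vv_q'$ is closed precisely when $q\ne M+1$.

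For the case $q\in(1,M+1)$ I would show directly that $\vv_q'$ is closed. The key elementary fact is that for a fixed sequence $(\alpha_i)$ the set $\set{(x_i)\in A^{\NN}:(x_i)\le(\alpha_i)}$ is closed, because its complement $\set{(x_i)>(\alpha_i)}$ is a union of cylinder sets and hence open. Given a sequence $(c^{(k)})$ in $\vv_q'$ converging coordinatewise to some $(c_i)$, I would verify \eqref{43} and \eqref{44} for $(c_i)$. Fixing $n$ with $c_n<M$, coordinatewise convergence in the discrete space $A$ forces $c_n^{(k)}=c_n<M$ for all large $k$, so \eqref{43} holds for each such $c^{(k)}$; since the shift map is continuous and the set just displayed is closed, passing to the limit yields $c_{n+1}c_{n+2}\cdots\le\alpha_1\alpha_2\cdots$. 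The verification of \eqref{44} is identical, using in addition that conjugation is continuous. Finally, since $q\ne M+1$ we have $(\alpha_i)<M^{\infty}$, so any sequence obeying \eqref{43} and \eqref{44} is automatically doubly infinite (as noted after Definition~\ref{d43}: a last nonzero digit, respectively a last non-$M$ digit, would force the impossible relation $M^{\infty}\le(\alpha_i)$). Hence $(c_i)\in\vv_q'$ and $\vv_q'$ is closed, so compact.

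For the case $q=M+1$ I would instead exhibit a failure of closedness. Here $\alpha(M+1)=M^{\infty}$, so the conditions \eqref{43} and \eqref{44} are vacuous and $\vv_{M+1}'$ is exactly the set of infinite sequences. This set is not closed: the infinite sequences $10^kM^{\infty}$, which all lie in $\vv_{M+1}'$, converge coordinatewise as $k\to\infty$ to the finite sequence $10^{\infty}$, and $10^{\infty}$ is not infinite, hence not in $\vv_{M+1}'$. Therefore $\vv_{M+1}'$ is not compact.

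The crux of the argument, and its only real subtlety, is the interplay between the conditional inequalities and the infiniteness requirement built into Definition~\ref{d43}. Lexicographic $\le$ survives coordinatewise limits, whereas strict $<$ and the bare ``infinite'' condition do not; the whole dichotomy hinges on the observation that for $q<M+1$ the inequalities \eqref{43} and \eqref{44} already enforce (double) infiniteness, so discarding the non-closed infiniteness constraint changes nothing, while for $q=M+1$ these inequalities say nothing and the infiniteness constraint is precisely what destroys compactness.
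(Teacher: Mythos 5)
Your proof is correct and takes essentially the same approach as the paper: both arguments hinge on the observation that for $q\ne M+1$ the inequalities \eqref{43} and \eqref{44} already force the required (double) infiniteness, so that $\vv_q'$ is closed in the compact space $A^{\NN}$ (you verify sequential closedness where the paper notes the complement is open, an immaterial difference), and hence compact. For $q=M+1$ your counterexample $10^kM^{\infty}\to 10^{\infty}$ is the same idea as the paper's $10^n1^{\infty}\to 10^{\infty}$, differing only in the choice of tail.
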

\begin{proof} If $q \not=M+1$, then a sequence $(c_i) \in A^{\NN}$ belongs to $\vv_q'$ if and only if \eqref{43} and \eqref{44} hold. Hence $A^{\NN} \setminus \vv_q'$ is open, whence $\vv_q'$ is closed and thus compact. For $n \ge 1$, the sequence $10^n1^{\infty}$ belongs to $\vv_{M+1}'$. If $ n \to \infty$, then $10^n1^{\infty}$ converges to $10^{\infty}$ which does not belong to $\vv_{M+1}'$, i.e., $\vv_{M+1}'$ is not closed. 
\end{proof} 
  
\begin{proof}[Proof of Theorem~\ref{t111}.]
(ia) This is the classical integer base case.
\medskip

(ib) 
If $\vv_q$ had an interior point, then bij Lemma~\ref{l29}, $\vv_q$ would also have an interior point with a finite greedy expansion, contradicting Lemma~\ref{l49}.  By Theorem \ref{t11} (ii), $\vv_q\setminus\uu_q$ is dense in $\vv_q$. Hence every $x\in\uu_q$ is an accumulation point of $\vv_q$. Since the accumulation points of a set form a closed set, we infer that every $x\in\uuuq=\vv_q$ is an accumulation point of $\vv_q$.
\medskip

(iia) See Examples \ref{e45} (i); here we have $q_1=\tilde q$.
\medskip

(iib) Theorem \ref{t13} (ii), Lemma~\ref{l81} and induction on $n$ show that $\uu_q$ is countably infinite for each $q \in (\tilde q, q_{KL})$. Suppose that $q \in (q_n,q_{n+1}]$. According to Lemma~\ref{l51}, for each element $x \in \uu_{q_n}$, there is a sequence $(x_i)$ of elements in $\vv_{q_n} \setminus \uu_{q_n}$ such that $a(x_i,q_{n}) \to a(x,q_{n})$ as $i \to \infty$. Since $\uu_q' = \vv_{q_n}'$, all elements of $\pi_q (\uu_{q_n}')$ are accumulation points of $\uu_q$ and can be approximated arbitrarily closely by elements of $\pi_q (\vv_{q_n}' \setminus \uu_{q_n}')$. A number $x \in \vv_{q_n} \setminus \uu_{q_n}$ is isolated in $\vv_{q_n}$ by Theorem~\ref{t13} (ii). Lemma~\ref{l21} implies that $\pi_q(a(x,q_n))$ is isolated in $\uu_q$ because univoque sequences are in particular greedy and quasi-greedy. 
\medskip

(iiia)  The set $\vv_{q_0}$ is perfect by Theorem~\ref{t11} and hence consists entirely of condensation points. Suppose that a sequence $(c_i)$ is univoque in base $q$, and let $W$ be an arbitrary neighborhood of $\sum_{i=1}^{\infty} c_i q^{-i}$. If $N$ is large enough, then each sequence starting with $c_1 \cdots c_N$ is the expansion in base $q$ of a number in $W$. If $(c_i)$ also belongs to $\uu_{q_0}'$, then, since univoque sequences are in particular greedy and quasi-greedy, applying Lemma~\ref{l21} and using the fact that $\vv_{q_0}' \setminus \uu_{q_0}'$ is countable (see Theorem \ref{t11}), we conclude that there are uncountably many sequences in $\uu_{q_0}'$ starting with $c_1 \cdots c_N$. Since, moreover, $\uu_{q_0}'  \subseteq \uu_q'$, each number belonging to $\pi_q (\uu_{q_0}')$ is a condensation point of $\uu_q$. It follows easily from Lemmas ~\ref{l21}, ~\ref{l52}, ~\ref{l61} and ~\ref{l82} that $\overline{\uu_{q_0}'}=\vv_{q_0}'$ because $q_0 \in \uuu \setminus \set{M+1}$.  Since $\pi_q$ is continuous and since the condensation points of $\uu_q$ form a closed set, each element in $\uu_q=\pi_q(\vv_{q_0}')$ is a condensation point of $\uu_q$. The set $\uu_q$ has no interior points because $\vv_q$  has none; see the proof of (ib). Hence $\uu_q$ is a Cantor set. It follows from Lemma~\ref{l51} that $\pi_q(\vv_{q_0}' \setminus \uu_{q_0}')$ is dense in $\uu_q$. 
\medskip

(iiib) One shows exactly as in (iib) that elements of $\pi_q(\vv_{q_n}' \setminus \uu_{q_n}')$ are isolated points of $\uu_q$ and form a dense subset of $\uu_q$. Hence, elements of $\pi_q(\uu_{q_n}')$ are accumulation points of $\uu_q=\pi_q(\vv_{q_n}')$. Since $\pi_q(\vv_{q_0}')$ is compact by Lemma~\ref{l82}, and since $\uu_{q_n}' \setminus \vv_{q_0}'$ is countable, elements of  $\pi_q(\uu_{q_n}'\setminus \vv_{q_0}')$ are not condensation points of $\uu_q$. Numbers belonging to $\pi_q(\vv_{q_0}')$ are condensation points of $\uu_q$ as follows from the reasoning in (iiia). 
\end{proof}

\begin{proof}[Proof of Theorem~\ref{t112}.]
Since we have always (ii) $\Longrightarrow$ (iii) $\Longrightarrow$ (iv), it suffices to show that $\uu'_q$ is a shift of finite type for every $q \in (1, M+1] \setminus \uuu$, and that $\uu'_q$ is not closed if $q\in\uuu$. Fix $q \in (1, M+1] \setminus \uuu$. Consider the connected component $(q_1, q_2)$ of $(1, \infty) \setminus \vv$, satisfying $q \in (q_1, q_2]$.
Let us write
\begin{equation*}
\alpha(q_2)=(\alpha_i) =
(\alpha_1 \cdots \alpha_k \overline{\alpha_1 \cdots \alpha_k})^{\infty},
\end{equation*}
where $k$ is minimal, and set
\begin{equation*}
\mathcal{F}=\set{ja_1\cdots a_k\in  A^{k+1}\ :\ j< M \qtq{and}
a_1 \cdots a_k \geq \alpha_1 \cdots \alpha_k}.
\end{equation*}
It suffices to show that a sequence $(c_i)\in A^{\NN}$ belongs to $\uu_q'=\uu'_{q_2}$ if and only if
\begin{equation}\label{86}
c_j \cdots c_{j+k} \notin \mathcal{F}
\qtq{and}
\overline{c_j \cdots c_{j+k}} \notin \mathcal{F}
\qtq{for all}
j \geq 1.
\end{equation}
It follows from the proof of Lemma ~\ref{l81} that $(c_i) \in \uu_{q_2}'$ if and only if 
\begin{equation*}
c_j<M \Longrightarrow c_{j+1}\cdots c_{j+k}\le(\alpha_1\cdots\alpha_k)^- \text{ and }  
c_j >0 \Longrightarrow \overline{c_{j+1}\cdots c_{j+k}}\le(\alpha_1\cdots\alpha_k)^-, 
\end{equation*}
and this is equivalent to \eqref{86}.  

Finally, if $q \in \uuu$, then $\uu_q'$ is not closed as follows from Lemma~\ref{l52}. 
\end{proof}

\begin{proof}[Proof of Theorem \ref{t113}]
First we show that $\bfv  \subseteq\bfuu \cap \bfj$. Fix $(x,q)\in\bfv$. If $q=M+1$, then $(x,q) \in \bfuu$ because $\overline{\uu_q}=\vv_q$.
If $1< q < M+1$, then by Lemma~\ref{l44}, $a(x,q)\in\uu'_r$ for every $r \in (q,M+1]$, so that $\pi_r(a(x,q))\in\uu_r$.
Since  $\pi_r(a(x,q))\to \pi_q(a(x,q))=x$ as $r\downarrow q$, we conclude that $(x,q)\in\bfuu \cap \bfj$.

Since $\bfu \subseteq\bfv$, the converse inclusion $\bfuu \cap \bfj  \subseteq\bfv$ will follow if we show that $\bfv$ is (relatively) closed in $\bfj$, i.e., 
if $(x,q)\in\bfj \setminus\bfv $,  then $(x',q')\notin\bfv $ for  all $(x',q')\in\bfj $ close enough to $(x,q)$.

Henceforth we assume that $q\in(1,M+1)$, and write $(\beta_i)=\beta(q)$, $(\beta_i')=\beta(q')$, $(a_i)=a(x,q)$ and $(a_i')=a(x',q')$.
By Lemma \ref{l47} there exist two positive integers $n$ and $m$ such that
\begin{equation}\label{91}
a_{n}>0\qtq{and}\overline{a_{n+1}\ldots a_{n+m}}>\beta_1\ldots\beta_m .
\end{equation}
It follows from the definition of quasi-greedy expansions that
\begin{equation*}
\frac{a_1}{q}+\cdots+\frac{a_{j-1}}{q^{j-1}}+ \frac{a_j^+}{q^j}+\frac{1}{q^{j+m}}>x\qtq{whenever}a_j<M.
\end{equation*}
Hence, if $(x',q') \in \bfj$ is sufficiently close to $(x,q)$, then, applying also Lemma \ref{l21} (i), (iii), 
\begin{gather}
\frac{a_1}{q'}+\cdots+\frac{a_{j-1}}{(q')^{j-1}}+ \frac{a_j^+}{(q')^j}+\frac{1}{(q')^{j+m}}>x'
\text{ whenever }
j\le n+m\text{ and }a_j<M,\label{92}\\
\beta'_1\ldots \beta'_m\le \beta_1\ldots \beta_m
\qtq{and}
a_1' \ldots a_{n+m}' \ge a_1 \ldots a_{n+m}.\label{93}
\end{gather}
Now we distinguish between two cases.

If $a_1'\ldots a'_{n+m} =  a_1\ldots a_{n+m}$, then we have
\begin{equation*}
a'_n>0\quad\text{and}\quad \overline{a'_{n+1}\ldots a'_{n+m}}>\beta_1\ldots\beta_m\ge  \beta'_1\ldots \beta'_m
\end{equation*}
by \eqref{91} and \eqref{93}. 
This proves that $(x',q')\notin\bfv $.

If $a_1'\ldots a'_{n+m}> a_1\ldots a_{n+m}$, then let us consider the smallest $j$ for which $a'_j>a_j$. 
It follows from \eqref{91}, \eqref{92} and \eqref{93} that
\begin{equation*}
a'_j=a_j^+>0\quad\text{and}\quad \overline{a'_{j+1}\ldots a'_{j+m}}
=M^m>\beta_1\ldots\beta_m\ge \beta'_1\ldots\beta'_m.
\end{equation*}
Hence $(x',q')\notin\bfv $ again.
\end{proof}

\section{List of principal terminology and notations}\label{s9}

\begin{itemize}
\item Page 1
\begin{itemize}
\item $\NN:=\set{1,2,3,\ldots}$
\item \emph{alphabet} $A:=\set{0,1,\ldots,M}$
\item \emph{digit}: an element of the alphabet
\end{itemize}
\item Page 2
\begin{itemize}
\item \emph{sequence}: an element of $A^{\NN}$
\item \emph{block} or \emph{word}: a finite sequence of digits
\item \emph{conjugate} or \emph{reflection} of a digit, word, or a sequence: 
\begin{equation*}
\overline{c_i}:=M-c_i,\quad
\overline{c_1\cdots c_n}:=\overline{c_1}\mbox{ } \cdots\mbox{ }\overline{c_n},\quad
\overline{c_1c_2\cdots}:=\overline{c_1}
\mbox{ }\overline{c_2}\mbox{ }\cdots
\end{equation*}
\item $w^+:=c_1\cdots c_{n-1}(c_n+1)$ if $w=c_1\cdots c_{n-1}c_n$ and $c_n<M$
\item $w^-:=c_1\cdots c_{n-1}(c_n-1)$ if $w=c_1\cdots c_{n-1}c_n$ and $c_n>0$
\item \emph{lexicographical order between words and sequences}
\item \emph{finite, co-finite, infinite, co-infinite and doubly infinite sequences}
\item \emph{expansion (of a number x in base $q$ over the alphabet $A$)}:
\begin{equation*}
(c_i)\in A^{\NN} \text{ satisfying } x=\sum_{i=1}^{\infty}\frac{c_i}{q^i}
\end{equation*}
\item short notation:
\begin{equation*}
\pi_q(c)=\pi_q(c_1c_2\cdots):=\sum_{i=1}^{\infty}\frac{c_i}{q^i},\quad c=(c_i)\in A^{\NN}
\end{equation*}
We assume  after Page 2 and in the remainder of this list that $1 < q  \le M+1$. 
\item 
$J_q:=[0,M/(q-1)]$: the set of numbers having an expansion in base $q$
\item $b(x,q)$: the \emph{greedy} (or lexicographically largest) expansion of $x$ in base $q$
\item $a(x,q)$: the \emph{quasi-greedy} (or lexicographically largest infinite)  expansion of $x$ in base $q$
\end{itemize}
\item Page 3
\begin{itemize}
\item $\uu$ is the set of \emph{univoque bases}. A base $q$ belongs to $\uu$ if $x=1$ has a unique expansion in base $q$. 
\item $\vv$ is the set of bases $q$ in which $x=1$ has a unique doubly infinite expansion.
\item A \emph{Cantor set} is a nonempty closed set having neither interior nor isolated points. 
\item $\uuu$ is the topological closure of $\uu$.
\end{itemize}
\item Page 4
\begin{itemize}
\item $\uu_q$ is the set of numbers $x\in J_q$ having a unique expansion in base $q$.
\item $\vv_q$ is the set of numbers $x\in J_q$ having at most one doubly infinite expansion in base $q$.
\item $\overline{\uu_q}$ is the topological closure of $\uu_q$.
\item $A_q$ is the set of numbers $x \in \vv_q \setminus \uu_q$ such that $b(x,q)$ is finite. 
\item $B_q$ is the set of numbers $x \in \vv_q \setminus \uu_q$ such that $b(x,q)$ is infinite. 
\item $\ell: J_q \to J_q$ is the \emph{reflection map} defined by $\ell(x)=M/(q-1)-x$. 
\end{itemize} 
\item Page 5
\begin{itemize}
\item The number $\tilde q:=\min\vv$ is the smallest element of $\vv$; for $M=1$, $\tilde q$ is the Golden ratio; see also Theorem \ref{t33}.
\end{itemize}
\item Page 6
\begin{itemize}
\item $\uu_q'\subseteq A^{\NN}$: the set of expansions of the elements of $\uu_q$
\item $\vv_q'\subseteq A^{\NN}$: the set of quasi-greedy expansions of the elements of $\vv_q$
\end{itemize}
\item Page 7
\begin{itemize}
\item The \emph{Komornik--Loreti constant} $q_{KL}:=\min\uu$ is the smallest element of $\uu$; see also Theorem \ref{t32}.
For $M=1$, $q_{KL}\approx 1.787$.
\end{itemize}
\item Page 8
\begin{itemize}
\item The two-dimensional analogues of $\uu_q$, $\vv_q$ and $J_q$ are defined as follows. 
\begin{align*}
\bfu:&=\set{(x,q)\in\RR^2\ :\ q\in(1,M+1]\text{ and }x\in\uu_q},\\
\bfv:&=\set{(x,q)\in\RR^2\ :\ q\in(1,M+1]\text{ and }x\in\vv_q},\\
\bfj:&=\set{(x,q)\in\RR^2\ :\ q\in(1,M+1]\text{ and }x\in J_q}. 
\end{align*}
\end{itemize}
\item Page 11
\begin{itemize}
\item $\beta(q):=b(1,q)$: greedy expansion of $x=1$ in base $q$
\item $\alpha(q):=a(1,q)$: quasi-greedy expansion of $x=1$ in base $q$
\end{itemize}
\item Page 12
\begin{itemize}
\item $X\subseteq\RR$ is \emph{closed from above (below)} if the limit of every bounded decreasing (increasing) sequence in $X$ belongs to $X$.
\end{itemize}
\end{itemize}

\noindent 
\emph{Acknowledgement.} We thank the referee for his/her helpful remarks.

\end{document}